\newcommand{\TheTitle}{The Convex Feasible Set Algorithm \\for Real Time Optimization in Motion Planning} 
\newcommand{\TheAuthors}{C. Liu, C. Lin and M. Tomizuka}
\title{{\TheTitle}}
\author{
  Changliu Liu\thanks{Department of Mechanical Engineering, University of California at Berkeley, Berkeley, CA 94720, USA
    (\email{changliuliu@berkeley.edu}, \email{chung\_yen@berkeley.edu}, \email{tomizuka@berkeley.edu}). The research was supported by FANUC Corporation.}
  \and
  Chung-Yen Lin\footnotemark[2]
  \and
  Masayoshi Tomizuka\footnotemark[2]
}
\begin{document}

\maketitle

\begin{abstract}
With the development of robotics, there are growing needs for real time motion planning. However, due to obstacles in the environment, the planning problem is highly non-convex, which makes it difficult to achieve real time computation using existing non-convex optimization algorithms. This paper introduces the convex feasible set algorithm (CFS) which is a fast algorithm for non-convex optimization problems that have convex costs and non-convex constraints. The idea is to find a convex feasible set for the original problem and iteratively solve a sequence of subproblems using the convex constraints. The feasibility and the convergence of the proposed algorithm are proved in the paper. The application of this method on motion planning for mobile robots is discussed. The simulations demonstrate the effectiveness of the proposed algorithm.
\end{abstract}

\begin{keywords}
  Non-convex optimization, non-differentiable optimization, robot motion planning
\end{keywords}

\begin{AMS}
  90C55, 90C26, 68T40, 93C85
\end{AMS}

\section{Introduction}

Although great progresses have been made in robot motion planning \cite{latombe2012robot}, the field is still open for research regarding real time planning in dynamic uncertain environment. The applications include but are not limited to real time navigation \cite{frazzoli2002real}, autonomous driving \cite{kuwata2009real, liu2016enabling}, robot arm manipulation and human robot cooperation \cite{liu2016algorithmic}. To achieve safety and efficiency, robot motion should be re-planned from time to time when new information is obtained during operation. The motion planning algorithm should run fast enough to meet the real time requirement for planning and re-planning.

In this paper, we focus on optimization-based motion planning methods, which fit into the framework of model-predictive control (MPC) \cite{howard2010receding}, where the optimal trajectory is obtained by solving a constrained optimization at each time step. As there are obstacles in the environment, the constraints are highly non-convex, which makes the problem hard to solve in real time. Various methods have been developed to deal with the non-convexity \cite{nocedal2006numerical, strongin2013global}. One popular way is through convexification \cite{tawarmalani2002convexification}, e.g., transforming the non-convex problem into a convex one. Some authors propose to transform the non-convex problem to semidefinite programming (SDP) \cite{eichfelder2013set}. Some authors introduce lossless convexification by augmenting the space \cite{accikmecse2013lossless, harris2014lossless}. And some authors use successive linear approximation to remove non-convex constraints \cite{liu2013autonomous, liu2014solving}. However, the first method can only handle quadratic cost functions. The second approach highly depends on the linearity of the system and may not be able to handle diverse obstacles. The third approach may not generalize to non-differentiable problems. Another widely-used convexification method is the sequential quadratic programming (SQP) \cite{spellucci1998new,tone1983revisions}, which approximates the non-convex problem as a sequence of quadratic programming (QP) problems and solves them iteratively. The method has been successfully applied to offline robot motion planning \cite{johansen2004constrained, schulman2013finding}. However, as SQP is a generic algorithm, the unique geometric structure of the motion planning problems is neglected, which usually results in failure to meet the real time requirement in engineering applications. 

Typically, in a motion planning problem, the constraints are physically determined, while the objective function is designed to be convex \cite{van2011lqg, ratliff2009chomp}. The non-convexity mainly comes from the physical constraints. Regarding this observation, a fast algorithm called the convex feasible set algorithm (CFS) is proposed in this paper to solve optimization-based motion planning problems with convex objective functions and non-convex constraints. 

The main idea of the CFS algorithm is to transform the original problem into a sequence of convex subproblems by obtaining convex feasible sets within the non-convex domain, then iteratively solve the convex subproblems until convergence. The idea is similar to SQP in that it tries to solve several convex subproblems iteratively. The difference between CFS and SQP lies in the way to obtain the convex subproblems. The geometric structure of the original problem is fully considered in CFS. This strategy will make the computation faster than conventional SQP and other non-convex optimization methods such as interior point (ITP) \cite{vanderbei1999interior}, as will be demonstrated later. Moreover, local optima is guaranteed.

It is worth noting that the convex feasible set in the trajectory space can be regarded as a convex corridor. The idea of using convex corridors to simplify the motion planning problems has been discussed in \cite{chen2015path, zhu2015convex}. However, these methods are application-specific without theoretical guarantees. 
In this paper, we consider general non-convex and non-differentiable optimization problems (which may not only arise from motion planning problems, but also other problems) and provide theoretical guarantees of the method. 

The remainder of the paper is organized as follows. \Cref{sec: benchmark problem} proposes a benchmark optimization problem. \Cref{sec: solve optimization} discusses the proposed CFS algorithm in solving the benchmark problem. \Cref{sec: proofs} shows the feasibility and convergence of the algorithm. \Cref{sec: mobile robot} illustrates the application of the algorithm on motion planning problems for mobile robots. \Cref{sec: conclusion} concludes the paper.

\section{The Optimization Problem\label{sec: benchmark problem}}
\subsection{The Benchmark Problem\label{sec: benchmark}}
Consider an optimization problem with a convex cost function but non-convex constraints, i.e.,
\begin{equation}
\min_{\mathbf{x}\in\Gamma} J(\mathbf{x}),\label{eq: the problem}
\end{equation}
where $\mathbf{x}\in\mathbb{R}^n$ is the decision variable and the problem follows two assumptions.
\begin{assump} [Cost]
$J:\mathbb{R}^n\rightarrow \mathbb{R}^+$ is smooth and strictly convex. \label{assump: cost}
\end{assump}
\begin{assump}[Constraint]
The set $\Gamma\subset\mathbb{R}^n$ is connected and closed, with piecewise smooth and non-self-intersecting boundary $\partial\Gamma$. For every point $x\in\Gamma$, there exists an $n$-dimensional convex polytope $P\subset\Gamma$ such that $x\in P$.\label{assump: constraint}
\end{assump}

\cref{assump: cost} implies that $J$ is radially unbounded, i.e., $J(\mathbf{x})\rightarrow\infty$ when $\|\mathbf{x}\|\rightarrow\infty$. 
\cref{assump: constraint} specifies the geometric features of the feasible set $\Gamma$, where the first part deals with the topological features of $\Gamma$ and the second part ensures that there is a convex neighborhood for any point in $\Gamma$. Note that equality constraints in $\Gamma$ are excluded by \cref{assump: constraint} as the dimension of the neighborhood for any point satisfying a equality constraint is strictly less than $n$.

The geometric structure of problem \cref{eq: the problem} is illustrated in \cref{fig: cfs illustration}. The contour represents the cost function $J$, while the gray parts represent $\Gamma^c$. There are two disjoint components in $\Gamma^c$. The goal is to find a local optimum (hopefully global optimum) starting from the initial reference point (blue dot). As shown in \cref{fig: cfs illustration}, the problem is highly non-convex and the non-convexity comes from the constraints. To make the computation efficient, we propose the convex feasible set algorithm in this paper, which transforms problem \cref{eq: the problem} into a sequence of convex optimizations by obtaining a sequence of convex feasible sets inside the non-convex domain $\Gamma$. As shown in \cref{fig: cfs}, the idea is implemented iteratively. At current iteration, a convex feasible set for the current reference point (blue dot) is obtained. The optimal solution in the convex feasible set (black dot) is set as the reference point for the next iteration. 
The formal mathematical description of this algorithm will be discussed in \cref{sec: solve optimization}. The feasibility of this method, i.e., the existence of an $n$-dimensional convex feasible set, is implied by \cref{assump: constraint}.  
Nonetheless, in order to compute the convex feasible set efficiently, we still need an analytical description of the constraint, which will be discussed in \cref{sec: analytical constraint}.

\begin{figure}[t]
\begin{center}
\subfloat[Iteration 1.\label{fig: cfs illustration}]{
\includegraphics[width=4.2cm]{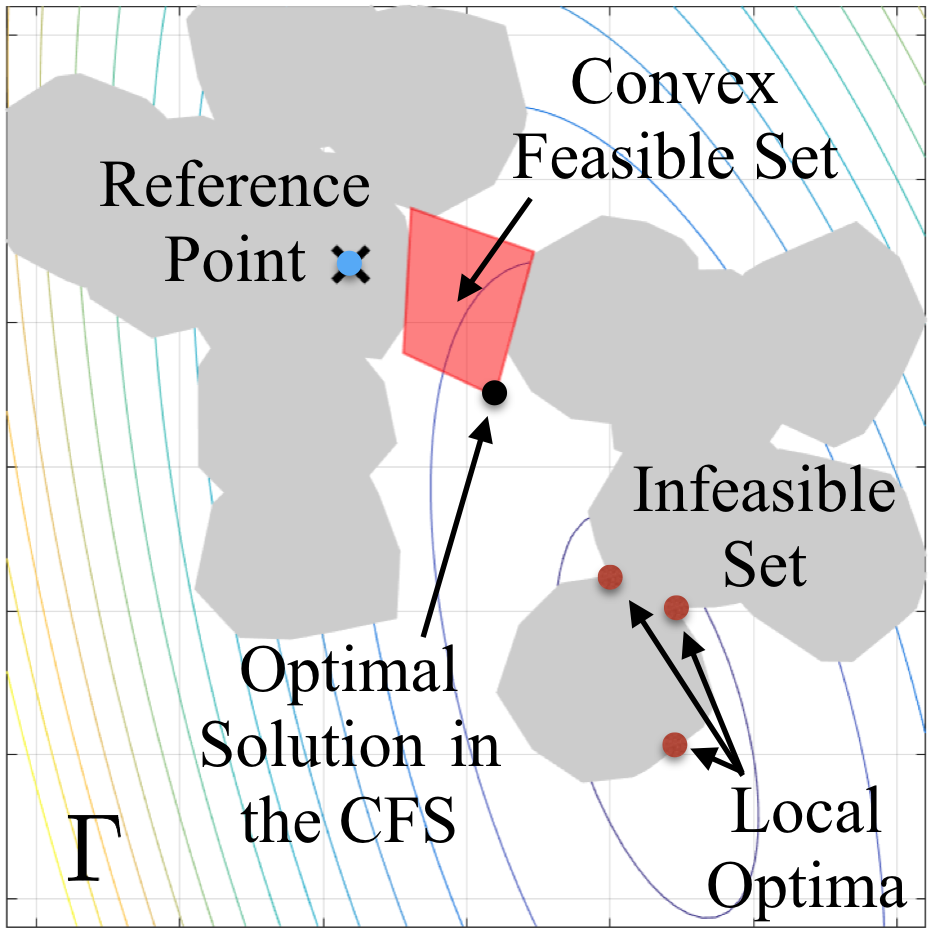}}
\subfloat[Iteration 2.\label{fig: cfs illustration 2}]{
\includegraphics[width=4.2cm]{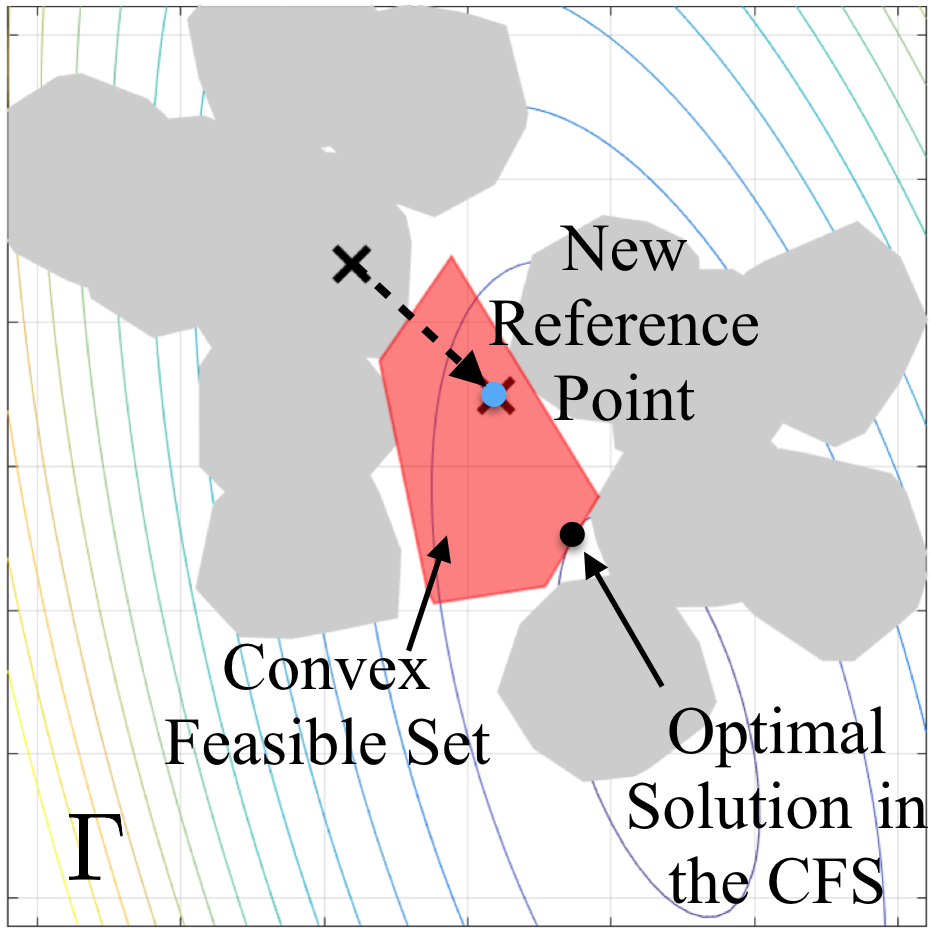}}
\subfloat[Iteration 3.\label{fig: cfs illustration 3}]{
\includegraphics[width=4.2cm]{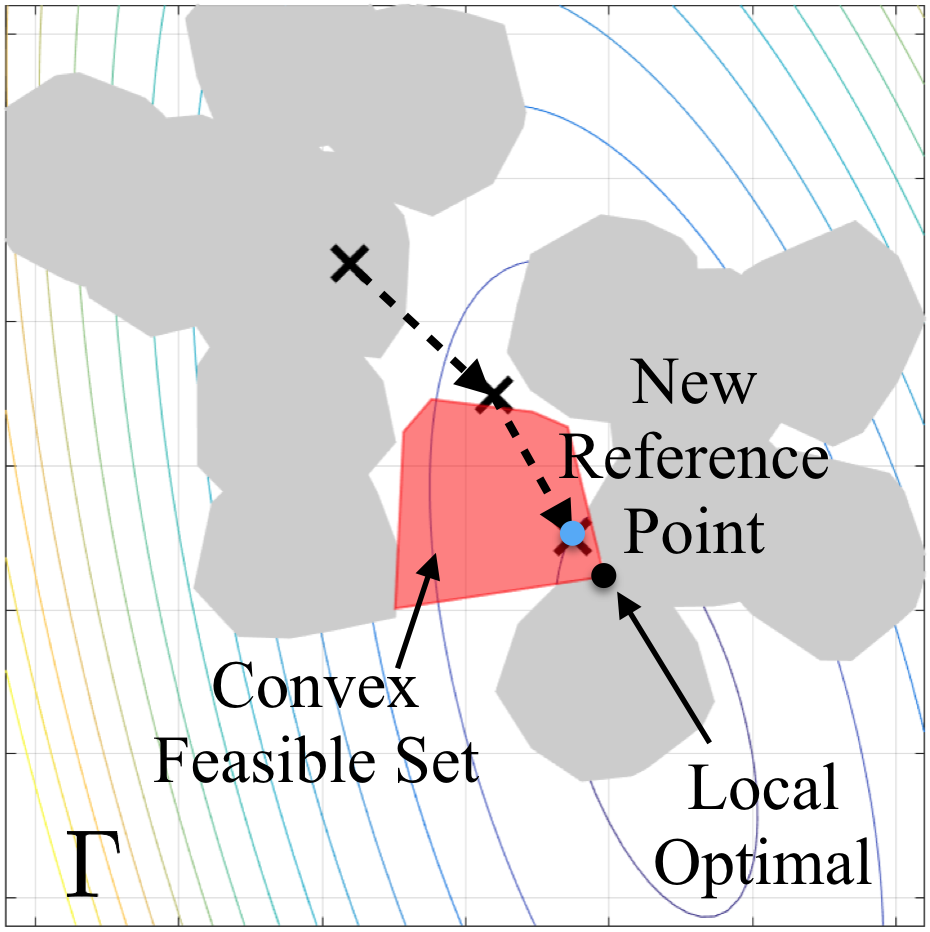}}
\caption{Geometry of problem \cref{eq: the problem} and the idea of the convex feasible set algorithm.}
\label{fig: cfs}
\end{center}
\end{figure}

\subsection{Analytical Representation of the Constraints\label{sec: analytical constraint}}
The set $\Gamma$ will be represented analytically by several inequality constraints. 
It is called a semi-convex decomposition of $\Gamma$ if 
\begin{equation}
\Gamma=\bigcap_{i=1}^N\{\mathbf{x}:\phi_i(\mathbf{x})\geq 0\}=\bigcap_{i=1}^N\Gamma_i,\label{eq: gamma}
\end{equation}
for $N$ continuous, piecewise smooth and semi-convex functions $\phi_i:\mathbb{R}^n\rightarrow\mathbb{R}$ such that $\Gamma_i:=\{\mathbf{x}:\phi_i(\mathbf{x})\geq 0\}$ with $\partial \Gamma_i = \{\mathbf{x}:\phi_i(\mathbf{x})= 0\}$. Semi-convexity \cite{Colesanti2000} of $\phi_i$ implies that there exists a positive semi-definite $H_i^*\in\mathbb{R}^{n\times n}$ such that the function 
\begin{equation}
\tilde {\phi}_i(\mathbf{x}) := \phi_i(\mathbf{x}) + \frac{1}{2}(\mathbf{x}-\mathbf{x}_0)^TH_i^* (\mathbf{x}-\mathbf{x}_0),\label{eq: semi-convex defn}
\end{equation}
is convex in $\mathbf{x}$ for any $\mathbf{x}_0\in\mathbb{R}^n$. Or in other words, the hessian of $\phi_i$ is bounded below. Note that $\Gamma_i^c$'s are not required to be disjoint and $N$ should be greater than or equal to the number of disjoint components in $\Gamma^c$. The decomposition from $\Gamma$ to $\Gamma_i$'s is not unique. Neither is the function $\phi_i$ that represents $\Gamma_i$. In many cases, $\phi_i$ can be chosen as a signed distance function to $\partial\Gamma_i$, which will be discussed in \cref{sec: transform the problem}. 

Before introducing more conditions on the decomposition \cref{eq: gamma}, analytical properties of the functions $\phi_i$'s will be studied first. Since $\tilde {\phi}_i$ is convex, then for any $\mathbf{x}_0,  v\in\mathbb{R}^n$, $\tilde {\phi}_i(\mathbf{x}_0+v)-2\tilde {\phi}_i(\mathbf{x}_0)+\tilde {\phi}_i(\mathbf{x}_0-v)\geq 0$. Consider \cref{eq: semi-convex defn}, the following inequality holds for any semi-convex functions,
\begin{equation}
\phi_i(\mathbf{x}_0+v)-2\phi_i(\mathbf{x}_0)+\phi_i(\mathbf{x}_0-v)\geq -v^TH_i^* v.\label{eq: semi-convexity}
\end{equation}
Moreover, since convex functions are locally Lipschitz \cite{convexfunctionlipschitz}, $\phi_i$ is also locally Lipschitz as implied by definition \cref{eq: semi-convex defn}. However, as $\phi_i$ is only piecewise smooth, it may not be differentiable everywhere. For any $v\in\mathbb{R}^n$, define the one-side directional derivative $\partial_v$\footnote{Note that $\partial$ refers boundary when followed by a set, e.g.,  $\partial \Gamma$. It means derivative when followed by a function, e.g.,  $\partial_v\phi_i$.} as
\begin{equation}
\partial_v\phi_i(\mathbf{x}) := \lim_{a\rightarrow 0^+} \frac{\phi_i(\mathbf{x}+av)-\phi_i(\mathbf{x})}{a}.\label{eq: def directional derivative}
\end{equation}
For any $\|v\|=1$, $\partial_{v}\phi_i(\mathbf{x})$ is bounded locally since $\phi_i$ is locally Lipschitz. 
If $\phi_i$ is smooth at direction $v$ at point $\mathbf{x}$, then
\begin{equation}
\lim_{a\rightarrow 0^+} \frac{\phi_i(\mathbf{x}+av)-\phi_i(\mathbf{x})}{a} = \lim_{a\rightarrow 0^-} \frac{\phi_i(\mathbf{x}+av)-\phi_i(\mathbf{x})}{a} = \lim_{a\rightarrow 0^+} \frac{\phi_i(\mathbf{x}-av)-\phi_i(\mathbf{x})}{-a},\label{eq: smooth directional derivative derivation}
\end{equation}
where the second equality is by taking negative of $a$. By definition \cref{eq: def directional derivative}, the right-hand side of \cref{eq: smooth directional derivative derivation} equals to $-\partial_{-v}\phi_i(\mathbf{x})$, which implies that $\partial_v\phi_i(\mathbf{x}) = -\partial_{-v}\phi_i(\mathbf{x})$. Let $\mathcal{S}(\phi_i,\mathbf{x}):=\{v\in\mathbb{R}^n: \partial_v\phi_i(\mathbf{x}) + \partial_{-v}\phi_i(\mathbf{x})=0\}$ denote all the smooth directions of function $\phi_i$ at point $\mathbf{x}$. The directional derivatives satisfy the following properties.

\begin{lemma}[Properties of Directional Derivatives]\label{lemma: inequalities of directional derivative}
If $\phi_i$ is continuous, piecewise smooth, and semi-convex, then for any $\mathbf{x},v,v_1,v_2\in\mathbb{R}^n$ and $b\in\mathbb{R}$ such that $v=v_1+v_2$, the following inequalities hold,
\begin{align}
0&\leq\partial_v\phi_i(\mathbf{x}) + \partial_{-v}\phi_i(\mathbf{x}),\label{eq: directional derivative}\\
b\partial_v\phi_i(\mathbf{x}) &\leq \partial_{bv}\phi_i(\mathbf{x}),\label{eq: multiplicity}\\
\partial_{v}\phi_i(\mathbf{x}) &\leq \partial_{v_1}\phi_i(\mathbf{x}) + \partial_{v_2}\phi_i(\mathbf{x}).\label{eq: directional derivative vector space}\end{align}
The equalities in \cref{eq: directional derivative} and \cref{eq: multiplicity} are achieved when $v\in\mathcal{S}(\phi_i,\mathbf{x})$. The equality in \cref{eq: directional derivative vector space} is achieved when $v_1,v_2\in\mathcal{S}(\phi_i,\mathbf{x})$.  
\end{lemma}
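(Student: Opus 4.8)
The plan is to reduce all three assertions to standard facts about one‑sided directional derivatives of \emph{convex} functions, exploiting that adding a smooth quadratic to $\phi_i$ does not affect the directional derivative at the base point. Fix $\mathbf{x}$, and in the semi‑convexity relation \cref{eq: semi-convex defn} choose $\mathbf{x}_0=\mathbf{x}$, so that the correction $q(\mathbf{y}):=\tfrac12(\mathbf{y}-\mathbf{x})^TH_i^*(\mathbf{y}-\mathbf{x})$ satisfies $q(\mathbf{x})=0$ and $\nabla q(\mathbf{x})=0$, while $f:=\phi_i+q=\tilde\phi_i$ is convex. Writing the difference quotient as $\frac{\phi_i(\mathbf{x}+av)-\phi_i(\mathbf{x})}{a}=\frac{f(\mathbf{x}+av)-f(\mathbf{x})}{a}-\frac{a}{2}v^TH_i^*v$, the first term converges as $a\to0^+$ because difference quotients of a convex function are nondecreasing in $a$ and bounded below, and the second term tends to $0$. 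Hence $\partial_v\phi_i(\mathbf{x})$ exists for every $v$ and equals $\partial_v f(\mathbf{x})$, and in particular $\mathcal{S}(\phi_i,\mathbf{x})=\mathcal{S}(f,\mathbf{x})$; so it suffices to prove \cref{eq: directional derivative}--\cref{eq: directional derivative vector space} for the convex $f$. (Note that continuity and semi‑convexity alone drive this step — piecewise smoothness of $\phi_i$ is not needed for this lemma.)

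Next I will use (or re‑derive in two lines from the monotonicity of difference quotients) the sublinearity of $v\mapsto\partial_v f(\mathbf{x})$ for convex $f$: positive homogeneity $\partial_{bv}f(\mathbf{x})=b\,\partial_v f(\mathbf{x})$ for $b\ge0$, and subadditivity $\partial_{v_1+v_2}f(\mathbf{x})\le\partial_{v_1}f(\mathbf{x})+\partial_{v_2}f(\mathbf{x})$. Then \cref{eq: directional derivative vector space} is precisely subadditivity; \cref{eq: directional derivative} follows by putting $v_1=v$, $v_2=-v$ and using $\partial_0 f(\mathbf{x})=0$; and for \cref{eq: multiplicity}, the case $b\ge0$ is an equality by homogeneity, while for $b<0$ one writes $\partial_{bv}f(\mathbf{x})=|b|\,\partial_{-v}f(\mathbf{x})\ge|b|\bigl(-\partial_v f(\mathbf{x})\bigr)=b\,\partial_v f(\mathbf{x})$, invoking \cref{eq: directional derivative} in the middle.

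It remains to handle the equality cases. For \cref{eq: directional derivative}, $v\in\mathcal{S}(\phi_i,\mathbf{x})$ is by definition exactly tightness of the bound. For \cref{eq: multiplicity} with $b<0$ and $v\in\mathcal{S}$, substituting $\partial_{-v}f(\mathbf{x})=-\partial_v f(\mathbf{x})$ into the chain above yields equality. The only genuinely delicate step will be equality in \cref{eq: directional derivative vector space} when $v_1,v_2\in\mathcal{S}$: I plan to use the support‑function representation $\partial_v f(\mathbf{x})=\sup_{g\in\partial f(\mathbf{x})}\langle g,v\rangle$, noting that $v_j\in\mathcal{S}$ means $\sup_g\langle g,v_j\rangle=-\sup_g\langle g,-v_j\rangle=\inf_g\langle g,v_j\rangle$, i.e.\ $\langle g,v_j\rangle$ is constant over the subdifferential for $j=1,2$; adding, $\langle g,v_1+v_2\rangle$ is constant as well, so $\partial_{v_1+v_2}f(\mathbf{x})=\langle g,v_1\rangle+\langle g,v_2\rangle=\partial_{v_1}f(\mathbf{x})+\partial_{v_2}f(\mathbf{x})$. (If one prefers to avoid subdifferentials, an equivalent direct argument shows that a sublinear function which is two‑sidedly linear along $v_1$ and along $v_2$ is additive across their sum.) Everything else is routine bookkeeping on top of textbook convex analysis.
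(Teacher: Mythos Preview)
Your proof is correct and takes a genuinely different organizational route from the paper. You first note that $\partial_v\phi_i(\mathbf{x})=\partial_v\tilde\phi_i(\mathbf{x})$ (the quadratic correction centered at $\mathbf{x}$ has vanishing first-order term there) and then work entirely inside convex analysis, quoting sublinearity of the directional derivative of a convex function and, for the equality case in \cref{eq: directional derivative vector space}, the support-function representation $\partial_v f(\mathbf{x})=\sup_{g\in\partial f(\mathbf{x})}\langle g,v\rangle$. The paper instead keeps the semi-convexity inequality $\phi_i(\mathbf{x}+av)-2\phi_i(\mathbf{x})+\phi_i(\mathbf{x}-av)\ge -a^{2}v^{T}H_i^{*}v$ throughout: for each of the three claims it writes down an appropriate second-difference bound, divides by $a$, and lets $a\to0^{+}$ so the $H_i^{*}$ term disappears in the limit; the equality case in \cref{eq: directional derivative vector space} is then handled by the elementary sandwich $0\le\partial_v\phi_i+\partial_{-v}\phi_i\le(\partial_{v_1}\phi_i+\partial_{-v_1}\phi_i)+(\partial_{v_2}\phi_i+\partial_{-v_2}\phi_i)=0$, which forces both subadditivity slacks to vanish --- precisely the subdifferential-free alternative you mention parenthetically. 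Your route is cleaner and more modular; the paper's route is more self-contained and avoids invoking the convex subdifferential, which matters in context since the paper only introduces its (Clarke-type) subdifferential $D\phi_i$ \emph{after} this lemma and in fact uses the lemma to justify that definition.
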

\begin{proof}
If $\phi_i$ is semi-convex, \cref{eq: semi-convexity} implies that for any scalar $a$ and vector $v$,
\begin{equation*}
\frac{\phi_i(\mathbf{x}+av)-2\phi_i(\mathbf{x})+\phi_i(\mathbf{x}-av)}{a}\geq -av^TH_i^* v.
\end{equation*}
Let $a\rightarrow 0^+$. The left-hand side approaches $\partial_v\phi_i(\mathbf{x}) + \partial_{-v}\phi_i(\mathbf{x})$, while the right-hand side approaches $0$ in the limits. Hence \cref{eq: directional derivative} holds. By definition \cref{eq: def directional derivative}, $\partial_{bv}\phi_i(\mathbf{x}) = b\partial_{v}\phi_i(\mathbf{x})$ when $b \geq 0$. When $b<0$, by \cref{eq: directional derivative}, $-|b|\partial_v\phi_i(\mathbf{x}) \leq |b|\partial_{-v}\phi_i(\mathbf{x}) =\partial_{-|b|v}\phi_i(\mathbf{x}) = \partial_{bv}\phi_i(\mathbf{x})$. Hence \cref{eq: multiplicity} holds. The equality holds when $\partial_v\phi_i(\mathbf{x}) + \partial_{-v}\phi_i(\mathbf{x})=0$,  i.e., $v\in\mathcal{S}(\phi_i,\mathbf{x})$. Moreover, \cref{eq: semi-convexity} also implies
\begin{align*}
&-\frac{a^2}{4}(v_1-v_2)^TH_i^* (v_1-v_2)\leq \phi_i(\mathbf{x}+av_1) - 2\phi_i(\mathbf{x}+a\frac{v}{2}) + \phi_i(\mathbf{x}+av_2)\\
&= \phi_i(\mathbf{x}+av_1) - \phi_i(\mathbf{x})  + \phi_i(\mathbf{x}+av_2) - \phi_i(\mathbf{x}) - 2[\phi_i(\mathbf{x}+a\frac{v}{2}) - \phi_i(\mathbf{x})].
\end{align*}
Divide the both sides by $a$ and take $a\rightarrow 0^+$. Then the left-hand side approaches $0$, while the right-hand side approaches $\partial_{v_1}\phi_i(\mathbf{x}) + \partial_{v_2}\phi_i(\mathbf{x}) - \partial_{v}\phi_i(\mathbf{x})$. Hence \cref{eq: directional derivative vector space} holds. When $v_1,v_2\in\mathcal{S}(\phi_i,\mathbf{x})$, 
\begin{equation}
0\leq \partial_v\phi_i(\mathbf{x}) + \partial_{-v}\phi_i(\mathbf{x}) \leq \partial_{v_1}\phi_i(\mathbf{x}) + \partial_{v_2}\phi_i(\mathbf{x}) + \partial_{-v_1}\phi_i(\mathbf{x}) + \partial_{-v_2}\phi_i(\mathbf{x}) = 0.\label{eq: equality vector space}
\end{equation}
The first inequality is due to \cref{eq: directional derivative}; the second inequality is due to \cref{eq: directional derivative vector space}. Hence the equality in \cref{eq: directional derivative vector space} is attained.
\end{proof}

Define the sub-differential of $\phi_i$ at $\mathbf{x}$ as
\begin{equation}
D\phi_i(\mathbf{x}):=\{d\in\mathbb{R}^n : d\cdot v \leq \partial_v\phi_i(\mathbf{x}),\forall v\in\mathbb{R}^n\}.\label{eq: sub-differential}
\end{equation}
The validity of the definition, i.e., the right-hand side of \cref{eq: sub-differential} is non empty, can be verified by \cref{lemma: inequalities of directional derivative}. By \cref{eq: multiplicity} and \cref{eq: directional derivative vector space}, for any $v_1,v_2\in\mathcal{S}(\phi_i,\mathbf{x})$, $a,b\in\mathbb{R}$ and $v = av_1+bv_2$, we have $\partial_v\phi_i(\mathbf{x}) = a\partial_{v_1}\phi_i(\mathbf{x}) + b\partial_{v_2}\phi_i(\mathbf{x})$ and $\partial_v\phi_i(\mathbf{x}) + \partial_{-v}\phi_i(\mathbf{x}) = 0$. Hence $v\in \mathcal{S}(\phi_i,\mathbf{x})$. We can conclude that 1) $\mathcal{S}(\phi_i,\mathbf{x})$ is a linear subspace of $\mathbb{R}^n$ and 2) the function induced by the directional derivative $v\mapsto\partial_v\phi_i(\mathbf{x})$ is a sub-linear function\footnote{A function $f$ is called sub-linear if it satisfies positive homogeneity $f(ax)=af(x)$ for $a>0$, and sub-additivity $f(x+y)\leq f(x)+f(y)$.} on $\mathbb{R}^n$ and a linear function on $\mathcal{S}(\phi_i,\mathbf{x})$. By Hahn-Banach Theorem \cite{folland2013real}, there exists a vector $d\in \mathbb{R}^n$ such that $d\cdot v = \partial_v\phi_i(\mathbf{x})$ for  $v\in \mathcal{S}(\phi_i,\mathbf{x})$ and $ d\cdot v\leq \partial_v\phi_i(\mathbf{x})$ for $v\in\mathbb{R}^n$. Moreover, as the unit directional derivative is bounded, the sub-gradients are also bounded. Hence the definition in \cref{eq: sub-differential} is justified. 
The elements in $D\phi_i(\mathbf{x})$ are called sub-gradients. 
When $\phi_i$ is smooth at $\mathbf{x}$, $D\phi_i(\mathbf{x})$ reduces to a singleton set which contains only the gradient $\nabla\phi_i(\mathbf{x})$ such that $\nabla\phi_i(\mathbf{x})\cdot v = \partial_v\phi_i(\mathbf{x})$ for all $v\in\mathbb{R}^n$. The definition \cref{eq: sub-differential} follows from Clarke (generalized) sub-gradients for non-convex functions \cite{clarke1975generalized}. 

With the definition of sub-differential for continuous, piecewise smooth and semi-convex functions in \cref{eq: sub-differential}, the following assumption regarding the analytical representation is made.
\begin{assump}[Analytical Representations]\label{assump: regularity}
For $\Gamma$ satisfying \cref{assump: constraint}, there exists a semi-convex decomposition \cref{eq: gamma} such that 1) $D\phi_i(\mathbf{x})\neq\{0\}$ for all $\mathbf{x}$, 2) $0\notin D\phi_i(\mathbf{x})$ if $\mathbf{x}\in\partial\Gamma_i$, and
3)  for any $\mathbf{x}$ such that $I:=\{i:\phi_i(\mathbf{x})=0\}\neq \emptyset$, there exists $v\in\mathbb{R}^n$ such that $\partial_v\phi_i(\mathbf{x}) < 0$ for all $i\in I$.
\end{assump}

Note that the hypothesis that any $\Gamma$ that satisfies \cref{assump: constraint} has a semi-convex decomposition that satisfies \cref{assump: regularity} will be verified in our future work. A method to construct the desired $\phi_i$'s is discussed in \cite{liu2017convex}. 

The first condition in \cref{assump: regularity} ensures that $\phi_i$ will not have smooth extreme points. 
Geometrically, the second condition in \cref{assump: regularity} implies that there cannot be any concave corners\footnote{Some authors name convex corners as outer corners and concave corners as inner corners \cite{Matveev20131268}.} 
in $\Gamma_i^c$ or convex corners in $\Gamma_i$. Suppose $\Gamma_i^c$ has a concave corner at $\mathbf{x}\in\partial\Gamma_i$. Since $0\notin D\phi_i(\mathbf{x})$, we can choose a unit vector $v$ such that $\partial_v\phi_i(\mathbf{x}) <0$ and $\partial_{-v}\phi_i(\mathbf{x}) <0$ as shown in \cref{fig: concave corner}. Then \cref{eq: directional derivative} is violated, which contradicts with the assumption on semi-convexity. 
Nonetheless, concave corners are allowed in $\Gamma^c$, but should only be formulated by a union of several intersecting $\Gamma_i^c$'s as shown in \cref{fig: convex corner}.  
In the example, the set $\Gamma = \{\mathbf{x} = (x_1,x_2): \min(|x_1|-1,|x_2|-1) \geq 0\}$ is partitioned into two sets $\Gamma_1 = \{\mathbf{x}: |x_1|-1\geq 0\}$ and $\Gamma_2 = \{\mathbf{x}: |x_2|-1\geq 0\}$. Both $\phi_1 =  |x_1|-1$ and $\phi_2 =  |x_1|-1$ satisfy \cref{assump: regularity}. Without the partition, $\phi = \min(|x_1|-1,|x_2|-1)$ violates the condition on semi-convexity\footnote{Let $\mathbf{x} = (1,1)$ and $v = (\cos \frac{\pi}{4},\sin\frac{\pi}{4})$. Then $\phi_i(\mathbf{x}+av)-2\phi_i(\mathbf{x})+\phi_i(\mathbf{x}-av) = - a\cos\frac{\pi}{4} -  a\sin\frac{\pi}{4} = -\sqrt{2}a$, which can not be greater than any $-a^2 v^TH_i^* v$ when $a$ is small.}. 
The third condition in \cref{assump: regularity} implies that once $\Gamma_i^c$'s intersect, they should have common interior among one another. For example, the decomposition in \cref{fig: convex corner invalid} is not allowed. In this case, the obstacle is partitioned into five components. $\partial\Gamma_2^c$, $\partial\Gamma_3^c$, and $\partial\Gamma_5^c$ intersect at $\mathbf{x}$. However, there does not exist $v\in\mathbb{R}^n$ such that $\partial_v\phi_i(\mathbf{x}) < 0$ for all $i\in \{2,3,5\}$ as the interiors of $\Gamma_2^c$ and $\Gamma_3^c$ do not intersect. This condition is enforced in order to ensure that the computed convex feasible set is non empty as will be discussed in \cref{lemma: existence}.

In this following discussion, $\phi_i$'s and $\Gamma_i$'s are referred as the semi-convex decomposition of $\Gamma$ that satisfies \cref{assump: regularity}.

\begin{figure}[t]
\begin{center}
\subfloat[The constraint $\Gamma$.]{\label{fig: concave corner}\includegraphics[width=2.8cm]{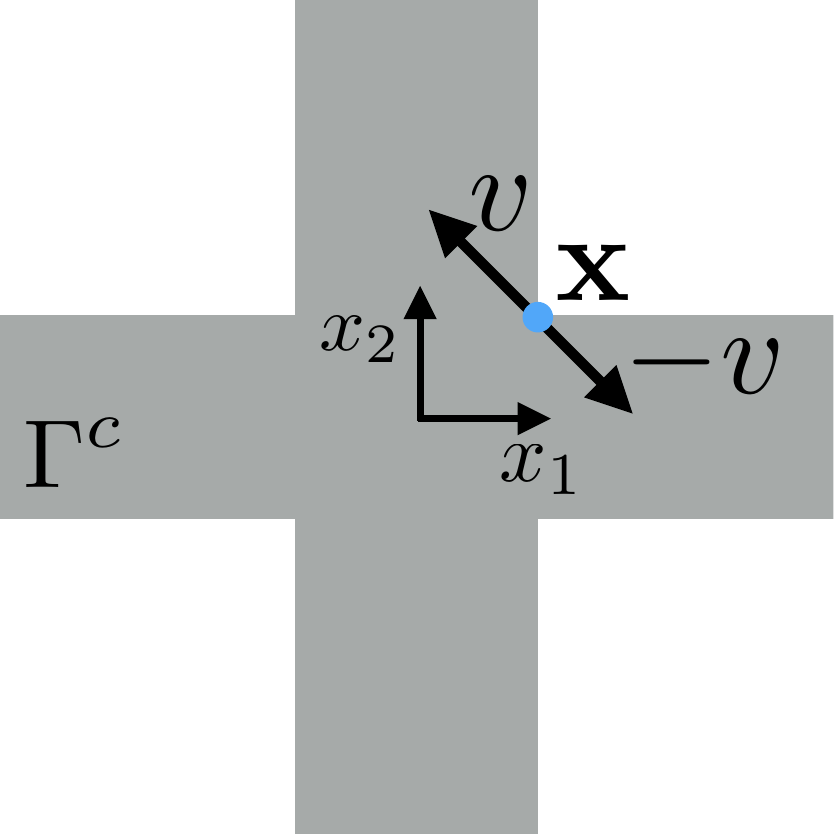}}
\subfloat[Valid decomposition of $\Gamma$.]{\label{fig: convex corner}\makebox[4cm][c]{\includegraphics[width=2.8cm]{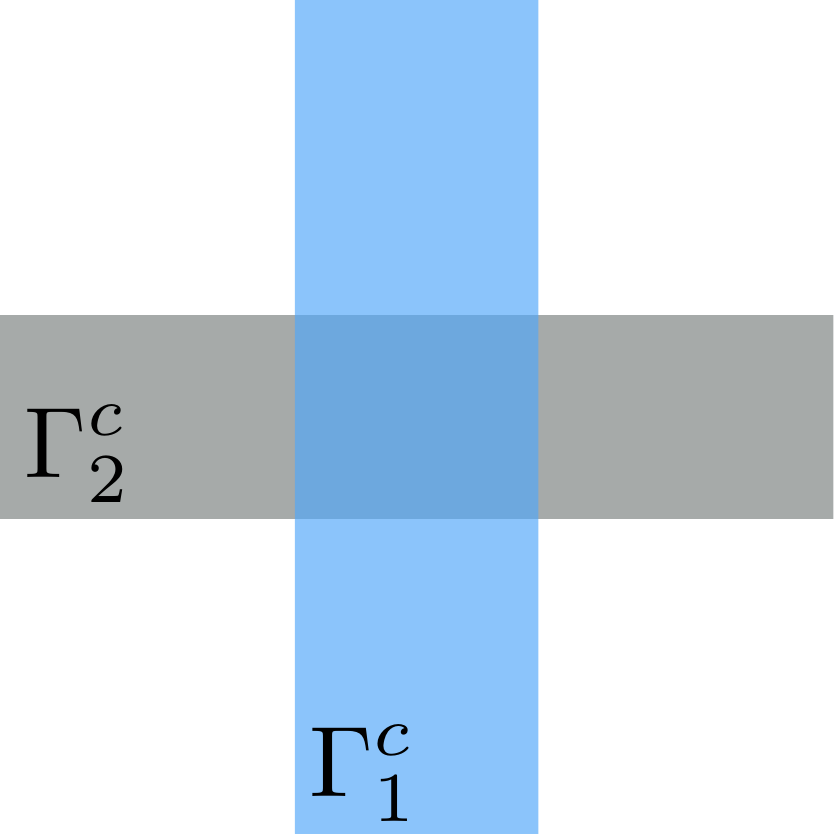}}}
\subfloat[Invalid decomposition of $\Gamma$.]{\label{fig: convex corner invalid}\makebox[4cm][c]{\includegraphics[width=2.8cm]{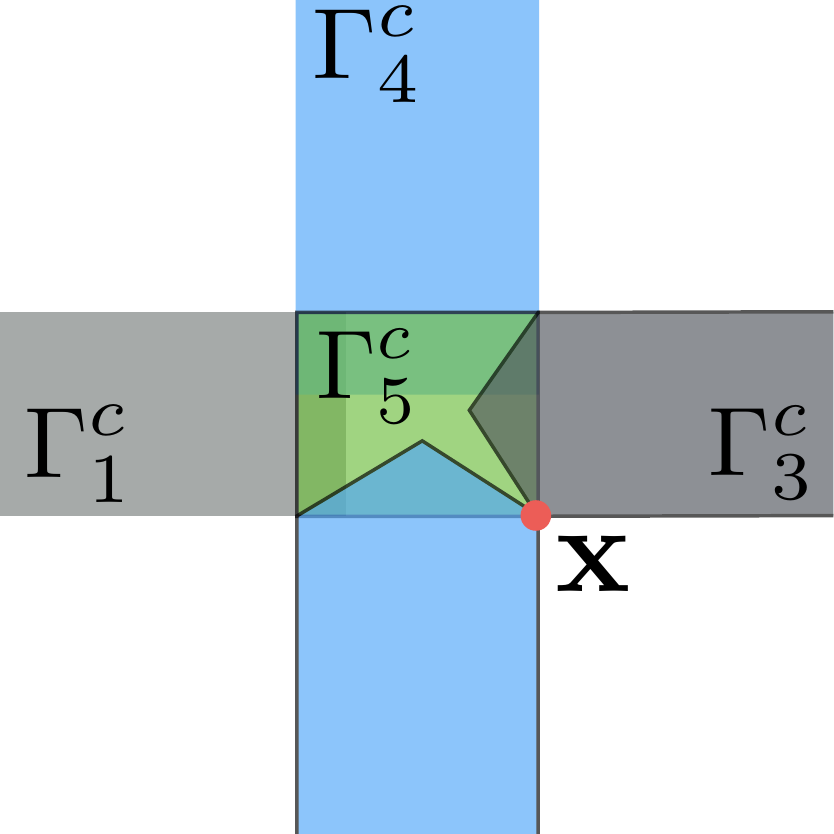}}}
\caption{Representing $\Gamma$ using $\Gamma_i$ and $\phi_i$.}
\label{fig: corner}
\end{center}
\end{figure}

\subsection{Physical Interpretations\label{sec: relationship}}

Many motion planning problems can be formulated into \cref{eq: the problem} when $\mathbf{x}$ is regarded as the trajectory as will be discussed in \cref{sec: mobile robot}. The dimension of the  problem $n$ is proportional to the number of sampling points on the trajectory. If continuous trajectories are considered, then $n\rightarrow \infty$ and $\mathbb{R}^n$ approaches the space of continuous functions $\mathcal{C}(\mathbb{R})$ in the limit.

In addition to motion planning problems, the proposed method deals with any problem with similar geometric properties as specified in \cref{assump: cost} and \cref{assump: constraint}. 
Moreover, problems with global linear equality constraints also fit into the framework if we solve the problem in the low-dimensional linear manifold defined by the linear equality constraints. 
The case for nonlinear equality constraints is much trickier since convexification on nonlinear manifold is difficult in general. A relaxation method to deal with nonlinear equality constraints is discussed in \cite{liu2017geometric}.

\section{Solving the Optimization Problem\label{sec: solve optimization}}

\subsection{The Convex Feasible Set Algorithm}
To solve the problem \cref{eq: the problem} efficiently, we propose the convex feasible set algorithm. As introduced in \cref{sec: benchmark}, a convex feasible set $\mathcal{F}$ for the set $\Gamma$ is  a convex set such that $\mathcal{F}\subset\Gamma$. $\mathcal{F}$ is not unique. We define the desired $\mathcal{F}$ in \cref{sec: find cfs}. As $\Gamma$ can be covered by several (may be infinitely many) convex feasible sets, we can efficiently search the non-convex space $\Gamma$ for solutions by solving a sequence of convex optimizations constrained in a sequence of convex feasible sets. The idea is implemented iteratively as shown in \cref{fig: cfs}. At iteration $k$, given a reference point $\mathbf{x}^{(k)}$, a convex feasible set $\mathcal{F}^{(k)}:=\mathcal{F}(\mathbf{x}^{(k)})\subset \Gamma$ is computed around $\mathbf{x}^{(k)}$. Then a new reference point $\mathbf{x}^{(k+1)}$ will be obtained by solving the resulting convex optimization problem
\begin{equation}
\mathbf{x}^{(k+1)}=\arg\min_{\mathbf{x}\in\mathcal{F}^{(k)}} J(\mathbf{x}).\label{eq: cfs qp}
\end{equation} 

The optimal solution will be used as the reference point for the next step. The iteration will terminate if either the change in solution is small, e.g.,  
\begin{equation}
\|\mathbf{x}^{(k+1)}-\mathbf{x}^{(k)}\|\leq \epsilon_1,\label{eq: terminal condition converge}
\end{equation}
for some small $\epsilon_1>0$, or the descent in cost is small, e.g., 
\begin{equation}
J(\mathbf{x}^{(k)})-J(\mathbf{x}^{(k+1)})\leq\epsilon_2, \label{eq: terminal condition cost}
\end{equation}
for some small $\epsilon_2>0$. We will show in \cref{sec: proofs} that these two conditions are equivalent and both of them imply convergence. 
The process is summarized in \cref{alg: cfs}. 

\begin{algorithm}
\caption{The Convex Feasible Set Algorithm}
\label{alg: cfs}
\begin{algorithmic}
\STATE{Initialize initial guess $\mathbf{x}^{(0)}$, $k:=0$;}
\WHILE{True}
\STATE{Find a convex feasible set $\mathcal{F}^{(k)}\subset \Gamma$ for $\mathbf{x}^{(k)}$;}
\STATE{Solve the convex optimization problem \cref{eq: cfs qp} for $\mathbf{x}^{(k+1)}$;}
\IF{\cref{eq: terminal condition converge} or \cref{eq: terminal condition cost} is satisfied}
\STATE{Break the while loop;}
\ENDIF
\STATE{$k:=k+1$;}
\ENDWHILE
\RETURN $\mathbf{x}^{(k+1)}$;
\end{algorithmic}
\end{algorithm}

\subsection{Finding the Convex Feasible Set\label{sec: find cfs}}

Considering the semi-convex decomposition \cref{eq: gamma}, we try to find a convex feasible set $\mathcal{F}_i$ for each constraint $\Gamma_i=\{\mathbf{x}:\phi_i(\mathbf{x})\geq 0\}$.

\subsubsection*{Case 1: $\phi_i$ is concave} Then $\Gamma_i$ is convex. The convex feasible set is chosen to be itself,
\begin{equation}
\mathcal{F}_i=\Gamma_i.\label{eq: def cfs self}
\end{equation}

\subsubsection*{Case 2: $\phi_i$ is convex} Then $\Gamma_i^c$ is convex. The convex feasible set $\mathcal{F}_i$ with respect to a reference point $\mathbf{x}^r\in\mathbb{R}^n$ is defined as
\begin{equation}
\mathcal{F}_i(\mathbf{x}^r):=\{\mathbf{x}:\phi_i(\mathbf{x}^r)+\hat{\nabla}\phi_i(\mathbf{x}^r)(\mathbf{x}-\mathbf{x}^r)\geq 0\},\label{eq: def cfs convex}
\end{equation}
where $\hat{\nabla}\phi_i(\mathbf{x}^r)\in D\phi_i(\mathbf{x}^r)$ is a sub-gradient. When $\phi_i$ is smooth at $\mathbf{x}^r$, $\hat{\nabla}\phi_i(\mathbf{x}^r)$ equals to the gradient $\nabla\phi_i(\mathbf{x}^r)$. Otherwise, the sub-gradient is chosen according to the method discussed in \cref{sec: subgradient}. 
Since $\phi_i$ is convex, $\phi_i(\mathbf{x})\geq \phi_i(\mathbf{x}^r)+\partial_{\mathbf{x}-\mathbf{x}^r}\phi_i(\mathbf{x}^r) \geq \phi_i(\mathbf{x}^r) + d\cdot(\mathbf{x}-\mathbf{x}^r)$ for all $d\in D\phi_i(\mathbf{x}^r)$ where the second inequality is due to \cref{eq: sub-differential}. Hence $\mathcal{F}_i(\mathbf{x}^r)\subset \{\mathbf{x}:\phi_i(\mathbf{x})\geq 0\}=\Gamma_i$ for all $\mathbf{x}^r\in\mathbb{R}^n$.

\subsubsection*{Case 3: $\phi_i$ is neither concave nor convex}  Considering \cref{eq: semi-convex defn}, the convex feasible set with respect to the reference point $\mathbf{x}^r$ is defined as
\begin{equation}
\mathcal{F}_i(\mathbf{x}^r):=\{\mathbf{x}:\phi_i(\mathbf{x}^r)+\hat{\nabla}\phi_i(\mathbf{x}^r)(\mathbf{x}-\mathbf{x}^r)\geq \frac{1}{2}(\mathbf{x}-\mathbf{x}^r)^T H^*_i(\mathbf{x}-\mathbf{x}^r)\},\label{eq: def cfs ncnc}
\end{equation}
where $\hat{\nabla}\phi_i(\mathbf{x}^r)\in D\phi_i(\mathbf{x}^r)$ is chosen according to the method discussed in \cref{sec: subgradient}. Since $\phi_i$ is semi-convex, $\phi_i(\mathbf{x}) \geq \phi_i(\mathbf{x}^r) + \partial_{\mathbf{x}-\mathbf{x}^r}\phi_i(\mathbf{x}^r) - \frac{1}{2}(\mathbf{x}-\mathbf{x}^r)^TH^{*}_i(\mathbf{x}-\mathbf{x}^r) \geq \phi_i(\mathbf{x}^r) + d\cdot (\mathbf{x}-\mathbf{x}^r) - \frac{1}{2}(\mathbf{x}-\mathbf{x}^r)^TH^{*}_i(\mathbf{x}-\mathbf{x}^r)$ for all $d\in D\phi_i(\mathbf{x}^r)$. Hence $\mathcal{F}_i(\mathbf{x}^r)\subset \{\mathbf{x}:\phi_i(\mathbf{x})\geq 0\}=\Gamma_i$ for all $\mathbf{x}^r\in\mathbb{R}^n$.

Considering \cref{eq: def cfs self}, \cref{eq: def cfs convex} and \cref{eq: def cfs ncnc}, the convex feasible set for $\Gamma$ at $\mathbf{x}^r$ is defined as
\begin{eqnarray}
\mathcal{F}(\mathbf{x}^r):=\bigcap_{i=1}^N\mathcal{F}_i(\mathbf{x}^r).
\end{eqnarray}

\subsection{Choosing the Optimal Sub-Gradients\label{sec: subgradient}}

The sub-gradients in \cref{eq: def cfs convex} and \cref{eq: def cfs ncnc} should be chosen such that the steepest descent of $J$ in the set $\Gamma$ is always included in the convex feasible set $\mathcal{F}$. 

Let $B(\mathbf{x},r)$ denote the unit ball centered at $\mathbf{x}\in\mathbb{R}^n$ with radius $r$. At point $\mathbf{x}^r$, a search direction $v\in\partial B(\mathbf{0},1)$ is feasible if for all $i$, one of the three conditions hold: 
\begin{itemize}
\item $\phi_i(\mathbf{x}^r)> 0$;
\item $\phi_i(\mathbf{x}^r)=0$ and there exists $d\in D\phi_{i}$ such that $v\cdot d\geq 0$;
\item $\phi_i(\mathbf{x}^r)<0$ and there exists $d\in D\phi_{i}$ such that $v\cdot d> 0$. 
\end{itemize}

Define the set of feasible search directions as $C(\mathbf{x}^r)$, which is non empty since we can choose $v$ to be $d/\|d\|$ for any nonzero $d\in D\phi_i$. $D\phi_i$ always contain a nonzero element by the first statement in \cref{assump: regularity}. Then the direction of the steepest descent is $v^*:=\arg\min_{v\in C(\mathbf{x}^r)}\nabla J \cdot v$. If $v^*$ is not unique, the tie breaking mechanism is designed to be: choosing the one with the smallest first entry, the smallest second entry, and so on\footnote{Note that the tie braking mechanism can be any as long as it makes $v^*$ unique. The uniqueness is exploited in \cref{lemma: fixed pt}.}. Then the optimal sub-gradient is chosen to be 
\begin{equation}
\hat{\nabla}\phi_{i}:=\arg\min_{d\in DF_i}\nabla J\cdot d/\|d\|,
\end{equation}
where $DF_i$ is the feasible set of sub-gradients for $\phi_i$ such that $DF_i := D\phi_i$ when $\phi_i > 0$; $DF_i:=\{d\in D\phi_{i}|d\cdot v^*\geq 0\}$ when $\phi_i = 0$; and $DF_i := \{d\in D\phi_{i}|d\cdot v^*> 0\}$ when $\phi_i < 0$. The set $DF_i$ is non empty by definition of $C(\mathbf{x}^r)$. To avoid singularity, let $d/\|d\|$ be $\mathbf{0}$ when $d = \mathbf{0}$. 
\cref{fig: sub-gradient} illustrates the above procedure in choosing the optimal sub-gradient, where the short arrow shows the direction of the steepest descent of $J$, the shaded sector shows the range of sub-differentials, the long arrow denotes the optimal sub-gradient, and the shaded half-space is the convex feasible set $\mathcal{F}_i$. In case one, $v^*$ is in the same direction of $\hat{\nabla}\phi_i$, while the two are perpendicular to each other in case two.

\begin{figure}[t]
\begin{center}
\subfloat[Case one.]{\includegraphics[width=6cm]{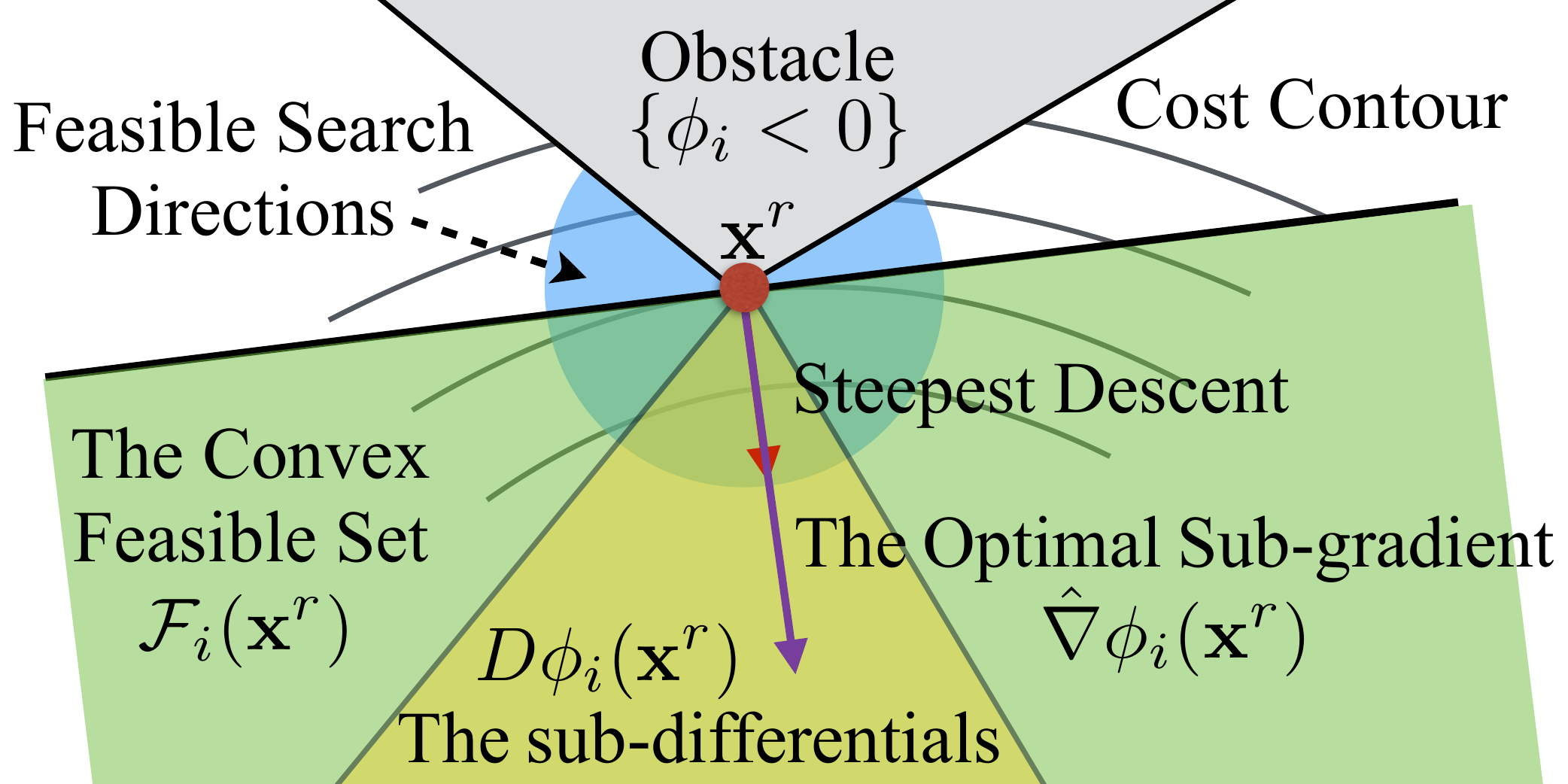}}~~
\subfloat[Case two.]{\includegraphics[width=5.5cm]{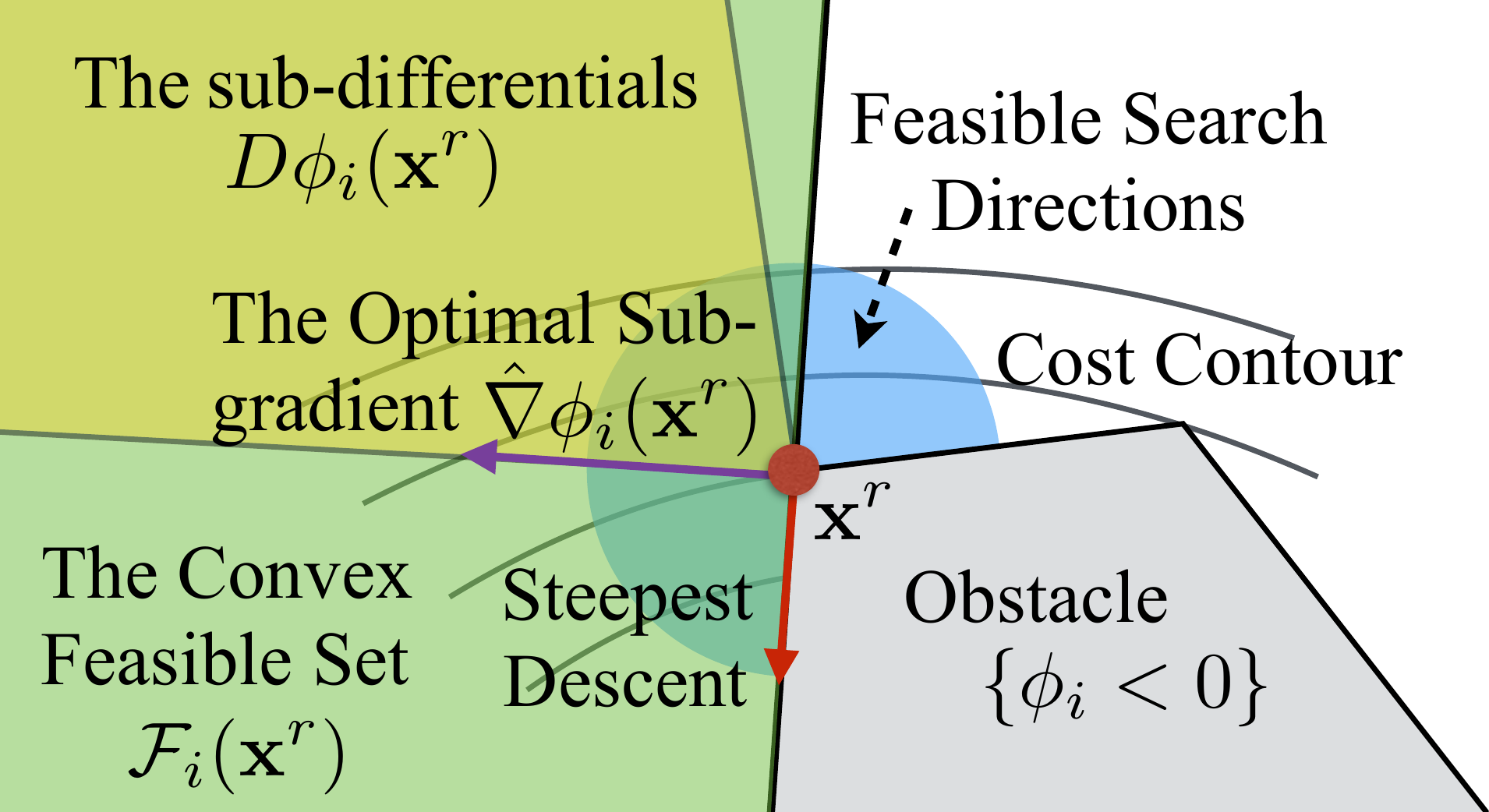}}
\caption{The choice of sub-gradient $\hat{\nabla}\phi_i(\mathbf{x}^r)$ on non-smooth point $\mathbf{x}^r$.}
\label{fig: sub-gradient}
\end{center}
\end{figure}

\section{Properties of the Convex Feasible Set Algorithm\label{sec: proofs}}

This section shows the feasibility and convergence of \cref{alg: cfs}. The main result is summarized in the following theorem:
\begin{theorem}[Convergence of \cref{alg: cfs}]
Under \cref{alg: cfs}, the sequence $\{\mathbf{x}^{(k)}\}$ will converge to some $\mathbf{x}^*\in\Gamma$ for any initial guess $\mathbf{x}^{(0)}$ such that $\mathcal{F}^{(0)}\neq\emptyset$. $\mathbf{x}^*$ is a strong local optimum of \cref{eq: the problem} if the limit is attained, i.e., there exists a constant $K\in\mathbb{N}$ such that $\mathbf{x}^{(k)}=\mathbf{x}^*$ for all $k>K$. $\mathbf{x}^*$ is at least a weak local optimum of \cref{eq: the problem} if the limit is not attained. \label{thm: converge}
\end{theorem}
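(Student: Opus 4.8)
The plan is to argue in three movements: feasibility together with a monotone cost, convergence of the iterates, and classification of the limit. \emph{Feasibility and monotone cost.} The key preliminary observation is that the reference point always lies in its own convex feasible set — substituting $\mathbf{x}=\mathbf{x}^{(k)}$ into \cref{eq: def cfs self}, \cref{eq: def cfs convex} and \cref{eq: def cfs ncnc} and using $\mathbf{x}^{(k)}\in\Gamma$ gives $\mathbf{x}^{(k)}\in\mathcal{F}^{(k)}$ — and that $\mathcal{F}^{(k)}\subset\Gamma$ by the semi-convexity and sub-gradient bounds already verified in \cref{sec: find cfs}. Hence, starting from any $\mathbf{x}^{(0)}$ with $\mathcal{F}^{(0)}\neq\emptyset$, we get $\mathbf{x}^{(1)}\in\Gamma$ and then inductively $\mathbf{x}^{(k)}\in\Gamma$ with $\mathcal{F}^{(k)}\neq\emptyset$ for all $k\ge 1$, with \cref{lemma: existence} supplying the general non-emptiness claim. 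Because $\mathbf{x}^{(k)}\in\mathcal{F}^{(k)}$ while $\mathbf{x}^{(k+1)}$ minimizes $J$ over $\mathcal{F}^{(k)}$ in \cref{eq: cfs qp}, the sequence $J(\mathbf{x}^{(k)})$ is non-increasing for $k\ge 1$; it is bounded below by $0$ (\cref{assump: cost}), so it converges to some $J^*\ge 0$ and $J(\mathbf{x}^{(k)})-J(\mathbf{x}^{(k+1)})\to 0$. By the radial unboundedness noted after \cref{assump: cost}, the sublevel set $\Omega:=\{\mathbf{x}:J(\mathbf{x})\le J(\mathbf{x}^{(1)})\}$ is compact and convex and contains every iterate with $k\ge 1$.

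\emph{Convergence of the iterates.} To show $\|\mathbf{x}^{(k+1)}-\mathbf{x}^{(k)}\|\to 0$ I would introduce a modulus of strict convexity on $\Omega$: since $J$ is continuous and strictly convex, for each $t>0$ the number $\eta(t):=\min\{\tfrac{1}{2}J(\mathbf{x})+\tfrac{1}{2}J(\mathbf{y})-J(\tfrac{\mathbf{x}+\mathbf{y}}{2}):\mathbf{x},\mathbf{y}\in\Omega,\ \|\mathbf{x}-\mathbf{y}\|\ge t\}$ is attained on a compact set and is strictly positive, and $\eta$ is non-decreasing. The midpoint $\tfrac{1}{2}(\mathbf{x}^{(k)}+\mathbf{x}^{(k+1)})$ lies in the convex set $\mathcal{F}^{(k)}$, so $J(\mathbf{x}^{(k+1)})\le J\!\left(\tfrac{1}{2}(\mathbf{x}^{(k)}+\mathbf{x}^{(k+1)})\right)$, which rearranges to $\eta(\|\mathbf{x}^{(k+1)}-\mathbf{x}^{(k)}\|)\le\tfrac{1}{2}\big(J(\mathbf{x}^{(k)})-J(\mathbf{x}^{(k+1)})\big)\to 0$; monotonicity of $\eta$ forces $\|\mathbf{x}^{(k+1)}-\mathbf{x}^{(k)}\|\to 0$, which in passing also shows that \cref{eq: terminal condition converge} and \cref{eq: terminal condition cost} are equivalent. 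Consequently the limit set $L$ of $\{\mathbf{x}^{(k)}\}$ is nonempty, compact, connected, and contained in $\{J=J^*\}\cap\Gamma$. I would then show every $\bar{\mathbf{x}}\in L$ is a fixed point of the update, $\bar{\mathbf{x}}=\arg\min_{\mathbf{x}\in\mathcal{F}(\bar{\mathbf{x}})}J(\mathbf{x})$: along a subsequence $\mathbf{x}^{(k_j)}\to\bar{\mathbf{x}}$ the successors converge to $\bar{\mathbf{x}}$ as well, and passing to the limit in $J(\mathbf{x}^{(k_j+1)})=\min_{\mathcal{F}(\mathbf{x}^{(k_j)})}J$ — using lower semicontinuity of $\mathbf{x}^r\mapsto\mathcal{F}(\mathbf{x}^r)$, which rests on the unique choice of $v^*$ and $\hat{\nabla}\phi_i$ in \cref{sec: subgradient}, together with $\bar{\mathbf{x}}\in\mathcal{F}(\bar{\mathbf{x}})$ — gives $J(\bar{\mathbf{x}})=\min_{\mathcal{F}(\bar{\mathbf{x}})}J$, so $\bar{\mathbf{x}}$ is that minimizer by strict convexity. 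Finally, were $L$ to contain two distinct fixed points, their first-order conditions would place their convex feasible sets on the outer sides of two distinct supporting hyperplanes of the strictly convex body $\{J\le J^*\}$; combined with connectedness of $L$ and the near-invariance of the update around a fixed point, this contradicts the sequence entering neighborhoods of both. Hence $L=\{\mathbf{x}^*\}$ and $\mathbf{x}^{(k)}\to\mathbf{x}^*\in\Gamma$.

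\emph{Classifying the limit, and the main obstacle.} The dichotomy comes from \cref{lemma: fixed pt}. If the limit is attained after finitely many steps, $\mathbf{x}^*$ is an exact fixed point; since the sub-gradient rule of \cref{sec: subgradient} is designed so that the steepest feasible descent direction $v^*$ of $J$ in $\Gamma$ is never cut away by $\mathcal{F}(\mathbf{x}^*)$, optimality of $\mathbf{x}^*$ over $\mathcal{F}(\mathbf{x}^*)$ excludes every feasible descent direction of $J$ in $\Gamma$, and with strict convexity of $J$ this makes $\mathbf{x}^*$ a strong local optimum. If the limit is only approached, the fixed-point relation holds only asymptotically, and since $\hat{\nabla}\phi_i$ may jump at non-smooth points, $\mathcal{F}(\mathbf{x}^*)$ can be strictly thinner than its neighboring limits, so one recovers only the first-order stationarity defining a weak local optimum. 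I expect the decisive difficulty to be the singleton-limit step — ruling out a connected continuum of limit points when $\mathbf{x}^{(k)}\mapsto\mathcal{F}^{(k)}$ is set-valued and possibly discontinuous at non-smooth points; the non-smooth calculus of \cref{lemma: inequalities of directional derivative}, the structure imposed by \cref{assump: regularity}, and the uniqueness of $v^*$ are the tools I would deploy there.
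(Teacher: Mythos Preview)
Your first movement and the modulus-of-convexity argument for $\|\mathbf{x}^{(k+1)}-\mathbf{x}^{(k)}\|\to 0$ are correct; the midpoint trick is in fact cleaner than what the paper does there, since it does not rely on a quadratic lower bound with a fixed constant $c$.

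The genuine gap is your claim that every accumulation point $\bar{\mathbf{x}}$ satisfies $\bar{\mathbf{x}}=\arg\min_{\mathcal{F}(\bar{\mathbf{x}})}J$, justified via ``lower semicontinuity of $\mathbf{x}^r\mapsto\mathcal{F}(\mathbf{x}^r)$''. That semicontinuity is not established and in general fails: when $\phi_i$ is non-smooth at $\bar{\mathbf{x}}$, the rule in \cref{sec: subgradient} may select a sub-gradient $\hat\nabla\phi_i(\bar{\mathbf{x}})$ that differs from every subsequential limit of $\hat\nabla\phi_i(\mathbf{x}^{(k)})$, so $\mathcal{F}(\bar{\mathbf{x}})$ need not be contained in $\liminf_k\mathcal{F}^{(k)}$. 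Your argument also proves too much: if it went through, \cref{lemma: fixed pt} would make every limit a \emph{strong} local optimum, erasing the distinction the theorem draws and contradicting your own remarks in the third movement. The separate ``supporting hyperplanes plus connectedness of $L$'' sketch for the singleton-limit step is too vague to carry the weight you place on it, and in any case rests on the same unproven fixed-point claim.

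The paper bypasses this entirely by never comparing $\mathbf{x}^*$ against $\mathcal{F}(\mathbf{x}^*)$. After proving whole-sequence convergence to $\mathbf{x}^*$ directly from \cref{lem: strong descent} (no limit-set analysis), it extracts a further subsequence along which $\hat\nabla\phi_i(\mathbf{x}^{(k)})\to d_i$, verifies $d_i\in D\phi_i(\mathbf{x}^*)$ using \cref{eq: directional derivative} and piecewise smoothness, and builds from these limit sub-gradients a \emph{suboptimal} convex feasible set $\mathcal{G}$ at $\mathbf{x}^*$. It then identifies $\mathcal{G}$ with the set-theoretic limit of the $\mathcal{F}^{(k)}$ and shows $\mathbf{x}^*=\arg\min_{\mathcal{G}}J$; the KKT system for that auxiliary convex problem is precisely the weak-local-optimality condition. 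This construction of $\mathcal{G}$ is the missing idea in your plan, and it is exactly what allows the strong/weak dichotomy to survive.
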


We say that $\mathbf{x}^*$ is a strong local optimum of \cref{eq: the problem} if $J$ is nondecreasing along any feasible search direction, e.g.,  $\nabla J(\mathbf{x}^*)v\geq0$ for all $v\in C(\mathbf{x}^*)$ as shown in \cref{fig: strong local optimum}. We say that $\mathbf{x}^*$ is a weak local optimum of \cref{eq: the problem} if the KKT condition is satisfied for some sub-gradients, i.e., $\nabla J(\mathbf{x}^*)+\sum_{i=1}^N\lambda_id_i=0$ for some $d_i\in D\phi_i(\mathbf{x}^*)$ as shown in \cref{fig: weak local optimum}. $\lambda_i$ is a Lagrange multiplier such that $\lambda_i\leq 0$ and $\lambda_i\phi_i(\mathbf{x})=0$ (complementary slackness) for all $i=1,\cdots,N$. A strong local optimum is always a weak local optimum. The two are equivalent when all $\phi_i$'s are smooth at $\mathbf{x}^*$. 

\begin{figure}[t]
\begin{center}
\subfloat[A strong local optimum.\label{fig: strong local optimum}]{\includegraphics[width=3.8cm]{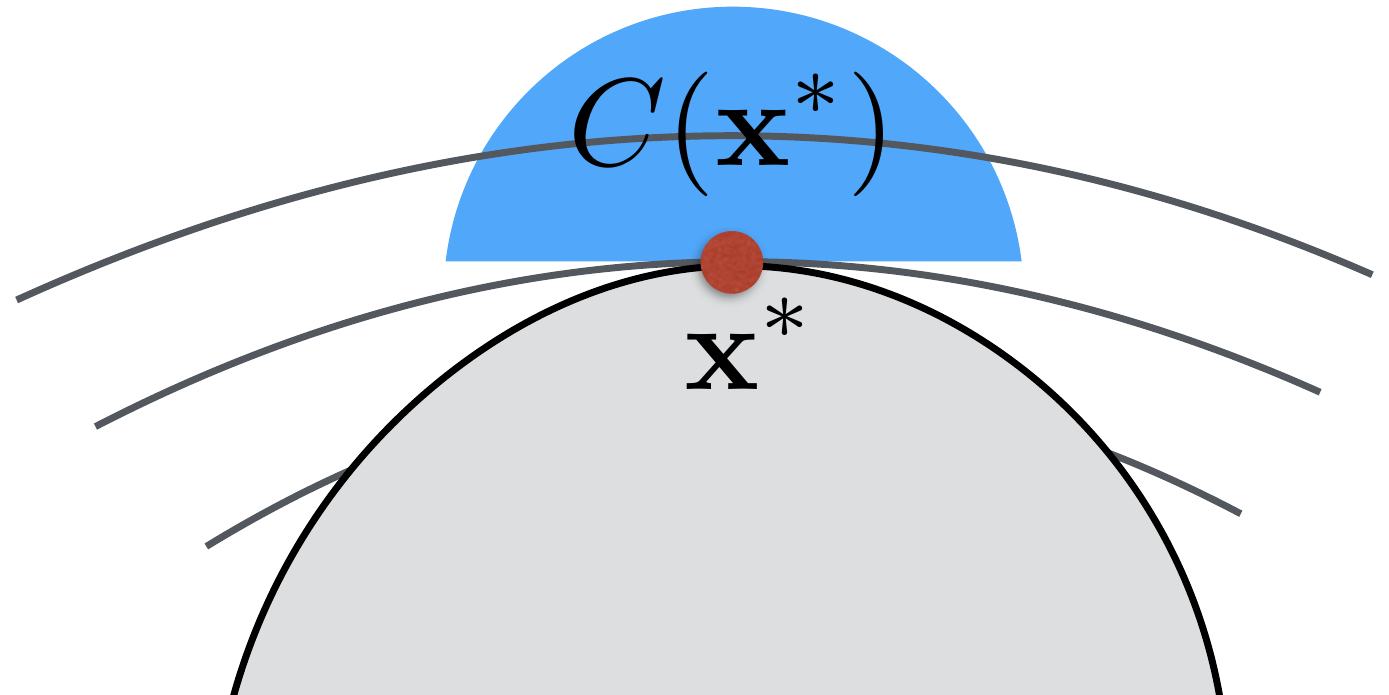}}~~~
\subfloat[A weak local optimum.\label{fig: weak local optimum}]{\includegraphics[width=3.8cm]{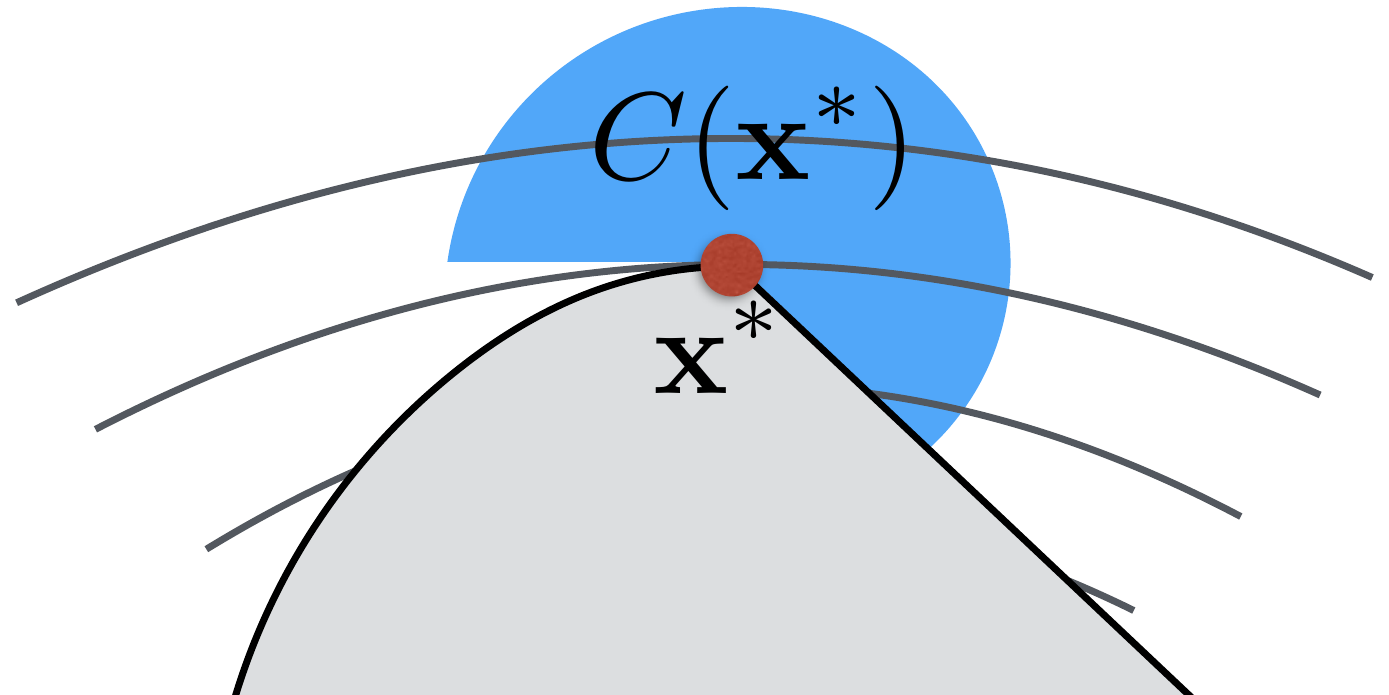}}
\caption{Definition of local optima.}
\label{fig: local optima}
\end{center}
\end{figure}

\subsection{Preliminary Results}
Before proving \cref{thm: converge}, we present some preliminary results that are useful toward proving the theorem. \cref{lemma: existence} states that given a feasible reference point $\mathbf{x}^r$, $\mathcal{F}(\mathbf{x}^r)$ is a convex set containing $\mathbf{x}^r$ with nontrivial interior. This conclusion naturally leads to the hypothesis that a suboptimal reference $\mathbf{x}^r$ can be improved by optimizing $J$ in the convex feasible set $\mathcal{F}(\mathbf{x}^r)$. We will show in \cref{lemma: fixed pt} that if the solution can not be improved using \cref{alg: cfs}, then $\mathbf{x}^r$ is already a strong local optimum of \cref{eq: the problem}. Otherwise, if we can keep improving the result using \cref{alg: cfs}, this process will generate a Cauchy sequence that converges to a weak local optimum of \cref{eq: the problem}, which will be shown in \cref{lemma: cauchy}. Given these results, the conclusion in the theorem follows naturally.

The interior of a set $S$ is denoted as $S^o$. We say that a reference point $\mathbf{x}^r\in\mathbb{R}^n$ is feasible if $\mathbf{x}^r\in \Gamma$; and 
$\mathbf{x}^*\in\Gamma$ is a fixed point of \cref{alg: cfs} if
\begin{equation}
\mathbf{x}^* = \arg\min_{\mathbf{x}\in\mathcal{F}{(\mathbf{x}^*)}}J(\mathbf{x}).
\end{equation}

\begin{lemma} [Feasibility]
If $\mathbf{x}^r\in\Gamma$, then $\mathbf{x}^r\in \mathcal{F}(\mathbf{x}^r)$ and $\mathcal{F}^o(\mathbf{x}^r)\neq \emptyset$. \label{lemma: existence}
\end{lemma}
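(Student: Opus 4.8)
The plan is to handle the two assertions separately. For the membership $\mathbf{x}^r\in\mathcal{F}(\mathbf{x}^r)=\bigcap_{i=1}^N\mathcal{F}_i(\mathbf{x}^r)$ I would simply evaluate each defining inequality at $\mathbf{x}=\mathbf{x}^r$: in Case 1, $\mathcal{F}_i=\Gamma_i\supseteq\Gamma\ni\mathbf{x}^r$; in Cases 2 and 3 the terms containing $(\mathbf{x}-\mathbf{x}^r)$ vanish at $\mathbf{x}=\mathbf{x}^r$ and the inequality collapses to $\phi_i(\mathbf{x}^r)\geq 0$, which holds since $\mathbf{x}^r\in\Gamma\subseteq\Gamma_i$. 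Hence $\mathbf{x}^r\in\mathcal{F}_i(\mathbf{x}^r)$ for every $i$.

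For $\mathcal{F}^o(\mathbf{x}^r)\neq\emptyset$, note that each $\mathcal{F}_i(\mathbf{x}^r)$ has the form $\{\mathbf{x}:g_i(\mathbf{x})\geq 0\}$ for a continuous (in fact concave) function $g_i$ with $g_i(\mathbf{x}^r)=\phi_i(\mathbf{x}^r)$. Split the indices into the active set $I_0:=\{i:\phi_i(\mathbf{x}^r)=0\}$ and its complement. For $i\notin I_0$ we have $g_i(\mathbf{x}^r)>0$, so by continuity there is a ball $B(\mathbf{x}^r,\rho_i)\subseteq\{g_i>0\}\subseteq\mathcal{F}_i(\mathbf{x}^r)$; thus if $I_0=\emptyset$ a small common ball around $\mathbf{x}^r$ already lies in $\mathcal{F}(\mathbf{x}^r)$ and we are done. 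The real work is at the active constraints.

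When $I_0\neq\emptyset$, I would invoke the third condition of \cref{assump: regularity} to obtain $v\in\mathbb{R}^n$ with $\partial_v\phi_i(\mathbf{x}^r)<0$ for all $i\in I_0$, and set $u:=-v$. Two consequences follow for each $i\in I_0$: first, by \cref{eq: directional derivative}, $\partial_u\phi_i(\mathbf{x}^r)=\partial_{-v}\phi_i(\mathbf{x}^r)\geq-\partial_v\phi_i(\mathbf{x}^r)>0$; second, since $\hat{\nabla}\phi_i(\mathbf{x}^r)\in D\phi_i(\mathbf{x}^r)$, definition \cref{eq: sub-differential} gives $\hat{\nabla}\phi_i(\mathbf{x}^r)\cdot u=-\hat{\nabla}\phi_i(\mathbf{x}^r)\cdot v\geq-\partial_v\phi_i(\mathbf{x}^r)>0$. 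Now put $\mathbf{y}:=\mathbf{x}^r+\epsilon u$ for small $\epsilon>0$. For active $i$ in Case 1, the first consequence and the definition of the one-sided directional derivative give $\phi_i(\mathbf{y})>\phi_i(\mathbf{x}^r)=0$ once $\epsilon$ is small, so $\mathbf{y}\in\Gamma_i^o=\mathcal{F}_i^o(\mathbf{x}^r)$. For active $i$ in Case 2, $g_i(\mathbf{y})=\epsilon\,\hat{\nabla}\phi_i(\mathbf{x}^r)\cdot u>0$ by the second consequence. For active $i$ in Case 3, $g_i(\mathbf{y})=\epsilon\,\hat{\nabla}\phi_i(\mathbf{x}^r)\cdot u-\tfrac12\epsilon^2 u^TH_i^* u$, which is positive for $\epsilon$ small because the positive linear term dominates the $O(\epsilon^2)$ correction. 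In all cases $g_i$ is continuous and $g_i(\mathbf{y})>0$, so $\mathbf{y}$ admits a ball inside $\mathcal{F}_i(\mathbf{x}^r)$. Finally, choosing $\epsilon$ small enough (finitely many constraints) that $\mathbf{y}$ also stays well inside the balls $B(\mathbf{x}^r,\rho_i)$ of the inactive indices, I obtain a single ball $B(\mathbf{y},r)\subseteq\bigcap_{i=1}^N\mathcal{F}_i(\mathbf{x}^r)=\mathcal{F}(\mathbf{x}^r)$, proving $\mathcal{F}^o(\mathbf{x}^r)\neq\emptyset$.

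The main obstacle is producing a \emph{single} direction $u$ that is strictly inward for every active linearized constraint at once: feasibility of $\mathbf{x}^r$ alone only forces boundary (non-strict) behavior, and the chosen sub-gradient $\hat{\nabla}\phi_i$ is not directly controllable beyond lying in $D\phi_i(\mathbf{x}^r)$. The resolution is that the third condition of \cref{assump: regularity} supplies exactly such a direction, and pairing it with \cref{eq: sub-differential} and \cref{eq: directional derivative} converts a statement about $\phi_i$ into the strict inequalities needed both in the concave case (through the directional derivative) and in the linearized cases (through any sub-gradient). A secondary, routine point is verifying that the quadratic term in Case 3 does not spoil strictness for sufficiently small steps.
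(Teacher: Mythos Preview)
Your proposal is correct and follows essentially the same route as the paper: both arguments verify $\mathbf{x}^r\in\mathcal{F}_i(\mathbf{x}^r)$ by evaluating the defining inequalities at $\mathbf{x}=\mathbf{x}^r$, split into the inactive indices (where continuity gives an open ball) and the active set $I_0$, and then invoke the third condition of \cref{assump: regularity} together with \cref{eq: sub-differential} and \cref{eq: directional derivative} to produce a single direction along which a small step lands in the strict interior of every $\mathcal{F}_i(\mathbf{x}^r)$. The only cosmetic difference is in Case~1, where the paper bounds $\phi_i(\mathbf{x}^r+\epsilon u)$ via the semi-convexity inequality, while you use directly that a strictly positive one-sided directional derivative forces $\phi_i(\mathbf{x}^r+\epsilon u)>0$ for small $\epsilon$; both are valid.
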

\begin{proof}
When $\mathbf{x}^r$ is feasible, $\mathbf{x}^r\in \mathcal{F}_i(\mathbf{x}^r)$ for all $i$ according to the definitions in \cref{eq: def cfs self}, \cref{eq: def cfs convex} and \cref{eq: def cfs ncnc}. Hence $\mathbf{x}^r\in \mathcal{F}(\mathbf{x}^r)$.

Claim 1: if $\mathbf{x}^r\in\Gamma^o$, then $\mathbf{x}^r\in\mathcal{F}^o(\mathbf{x}^r)$.  
The condition $\mathbf{x}^r\in\Gamma^o$ implies that $\phi_i(\mathbf{x}^r)> 0$ for all $i=1,\cdots, N$. Then the inequality in \cref{eq: def cfs convex} and \cref{eq: def cfs ncnc} are not tight at $\mathbf{x}^r$. Hence $\mathbf{x}^r\in\mathcal{F}_i^o(\mathbf{x}^r)$ in either of the three cases. Since $N$  is finite, $\mathbf{x}^r\in\bigcap_{i=1}^N \mathcal{F}_i^o(\mathbf{x}^r) = \mathcal{F}^o(\mathbf{x}^r)$.

Claim 2: if $\mathbf{x}^r\in\partial\Gamma$, there exists a non trivial $v\in\mathbb{R}^n$ such that $\mathbf{x}^r+v\in\mathcal{F}^o(\mathbf{x}^r)$. 
Let  $I:=\{i:\phi_i(x^r)=0\}$. By the third statement in \cref{assump: regularity}, 
there exists a unit vector $w\in\mathbb{R}^n$ such that $\partial_w\phi_i(\mathbf{x}^r) < 0$ for all $i\in I$.  
Fix $i\in I$. Let $a>0$ be sufficiently small. When $\phi_i$ is concave, 
\begin{equation*}
\phi_i(\mathbf{x}^r-a w)\geq \partial_{-aw}\phi_i(\mathbf{x}^r) -\frac{a^2}{2} w^T H_i^* w\geq -a \partial_w\phi_i(\mathbf{x}^r) - \frac{a^2}{2} w^T H_i^* w > 0,
\end{equation*}
where the first inequality is due to semi-convexity, the second inequality is due to \cref{eq: directional derivative}, and the third inequality is because $a$ is small. When $\phi_i$ is convex, 
\begin{equation*}
\phi_i(\mathbf{x}^r) + \hat{\nabla}\phi_i(\mathbf{x}^r)\cdot(-aw) \geq -a \partial_{w}\phi_i(\mathbf{x}^r) > 0,
\end{equation*}
where the first inequality is due to $\hat{\nabla}\phi_i(\mathbf{x}^r)\cdot w\leq \partial_w \phi_i(\mathbf{x}^r)$ by definition \cref{eq: sub-differential}. When $\phi_i$ is neither concave nor convex, 
\begin{equation*}
\phi_i(\mathbf{x}^r) + \hat{\nabla}\phi_i(\mathbf{x}^r)\cdot(-aw) \geq -a \partial_{w}\phi_i(\mathbf{x}^r) > \frac{a^2}{2} w^T H_i^* w.
\end{equation*}
Hence $\mathbf{x}^r - aw \in \mathcal{F}^o_i(\mathbf{x}^r)$ for any sufficiently small $a$. Since $I$ is finite, we can find a constant $\epsilon$ such that $\mathbf{x}^r - aw \in \mathcal{F}^o_i(\mathbf{x}^r)$ for all $i\in I$ and $0<a\leq\epsilon$. 
On the other hand, $\mathbf{x}^r\in \Gamma_j^o$ for all $j\notin I$. According to Claim 1, $\mathbf{x}^r\in \bigcap_{j\notin I}\mathcal{F}_j^o(\mathbf{x}^r)$. There exists a constant $\epsilon_0>0$ such that $B(\mathbf{x}^r,\epsilon_0)\subset\bigcap_{j\notin I}\mathcal{F}_j^o(\mathbf{x}^r)$. Define $v = -\min(\epsilon_0,\epsilon) w$. According to previous discussion, $\mathbf{x}^r+v\in \mathcal{F}^o(\mathbf{x}^r)$.

Claim 1 and Claim 2 imply that $\mathcal{F}(\mathbf{x}^r)$ has nonempty interior.
\end{proof}

\begin{proposition}[Fixed point]
If $\mathbf{x}^*$ is a fixed point of \cref{alg: cfs}, then $\mathbf{x}^*$ is a strong local optimum of \cref{eq: the problem}.\label{lemma: fixed pt}
\end{proposition}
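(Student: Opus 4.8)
The plan is to argue by contraposition: if $\mathbf{x}^*$ is \emph{not} a strong local optimum, then it cannot be a fixed point of Algorithm~\ref{alg: cfs}, i.e., $\arg\min_{\mathbf{x}\in\mathcal{F}(\mathbf{x}^*)}J(\mathbf{x})\neq\mathbf{x}^*$. By definition of strong local optimum, non-optimality means there exists a feasible search direction $v\in C(\mathbf{x}^*)$ with $\nabla J(\mathbf{x}^*)\cdot v<0$. The key claim I would establish is that $\mathbf{x}^*+av\in\mathcal{F}(\mathbf{x}^*)$ for all sufficiently small $a>0$; granting this, strict convexity of $J$ (Assumption~\ref{assump: cost}) and $\nabla J(\mathbf{x}^*)\cdot v<0$ give $J(\mathbf{x}^*+av)<J(\mathbf{x}^*)$ for small $a>0$, so $\mathbf{x}^*$ cannot be the minimizer of $J$ over $\mathcal{F}(\mathbf{x}^*)$, and the contrapositive is proved.

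So the real work is the inclusion $\mathbf{x}^*+av\in\mathcal{F}_i(\mathbf{x}^*)$ for each $i$ and all small $a>0$. I would split on the value of $\phi_i(\mathbf{x}^*)$, mirroring the three-way case analysis used in Lemma~\ref{lemma: existence} (Claim 2). If $\phi_i(\mathbf{x}^*)>0$, then $\mathbf{x}^*$ lies in the open set $\mathcal{F}_i^o(\mathbf{x}^*)$ (the defining inequality is slack), so a whole ball around $\mathbf{x}^*$ is inside, covering all small $a$. If $\phi_i(\mathbf{x}^*)=0$, feasibility of $v$ gives some $d\in D\phi_i(\mathbf{x}^*)$ with $v\cdot d\geq 0$; the subtlety is that the convex feasible set is built with the \emph{specific} optimal sub-gradient $\hat\nabla\phi_i(\mathbf{x}^*)$ chosen in Section~\ref{sec: subgradient}, not an arbitrary one. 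Here I would invoke the construction: since $v^*$ is the steepest feasible descent direction and the tie-breaking makes it unique, one needs that $\hat\nabla\phi_i(\mathbf{x}^*)\cdot v\geq 0$ for the particular $v$ realizing the failure of optimality — which should follow because $v$ (suitably normalized) is itself a feasible descent direction, hence is compared against $v^*$, and $\hat\nabla\phi_i$ was selected precisely to keep the steepest descent direction feasible. Then for the convex case $\phi_i(\mathbf{x}^*)+\hat\nabla\phi_i(\mathbf{x}^*)\cdot(av)=a\,\hat\nabla\phi_i(\mathbf{x}^*)\cdot v\geq 0$; for the non-convex-non-concave case the right-hand side $\tfrac{1}{2}a^2 v^TH_i^*v$ is $O(a^2)$ and dominated for small $a$; for the concave case one uses semi-convexity plus $\partial_v\phi_i(\mathbf{x}^*)\geq -\,\partial_{-v}\phi_i(\mathbf{x}^*)$ exactly as in Claim 2 of Lemma~\ref{lemma: existence}. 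The case $\phi_i(\mathbf{x}^*)<0$ cannot occur since $\mathbf{x}^*\in\Gamma$, so it is vacuous. Finiteness of $N$ lets me take a single $a_0>0$ working for all $i$ simultaneously, so $\mathbf{x}^*+av\in\mathcal{F}(\mathbf{x}^*)=\bigcap_i\mathcal{F}_i(\mathbf{x}^*)$ for $0<a\leq a_0$.

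The main obstacle is the index set $I=\{i:\phi_i(\mathbf{x}^*)=0\}$ and reconciling the "feasible direction" definition of $C(\mathbf{x}^*)$ (which only asks for \emph{some} $d\in D\phi_i$ with $v\cdot d\geq 0$) with the convex feasible set, which is pinned down by the one particular sub-gradient $\hat\nabla\phi_i$. I expect this is exactly why Section~\ref{sec: subgradient} defines $v^*$ and $\hat\nabla\phi_i$ jointly: the optimal sub-gradient must satisfy $\hat\nabla\phi_i\cdot v^*\geq 0$, and the descent direction $v^*$ attains $\nabla J\cdot v^*=\min_{v\in C(\mathbf{x}^*)}\nabla J\cdot v$. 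If $\mathbf{x}^*$ is not a strong local optimum, this minimum is strictly negative, so taking $v=v^*$ we simultaneously get $\nabla J(\mathbf{x}^*)\cdot v^*<0$ and $\hat\nabla\phi_i(\mathbf{x}^*)\cdot v^*\geq 0$ for every $i\in I$, which is precisely what the case analysis above needs. A secondary point to be careful about is that $J(\mathbf{x}^*+av)<J(\mathbf{x}^*)$ for small $a>0$ requires only $\nabla J(\mathbf{x}^*)\cdot v<0$ and smoothness of $J$ (Taylor expansion), which is immediate, but I should state it cleanly since it is what converts the geometric inclusion into a contradiction with the fixed-point property.
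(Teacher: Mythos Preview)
Your contrapositive strategy is reasonable and the identification of $v^*$ as the right direction (so that $\hat\nabla\phi_i(\mathbf{x}^*)\cdot v^*\geq 0$ for all $i\in I$ by the construction in \cref{sec: subgradient}) is exactly the right observation. However, the key inclusion $\mathbf{x}^*+av^*\in\mathcal{F}_i(\mathbf{x}^*)$ for small $a>0$ fails in general when $\hat\nabla\phi_i(\mathbf{x}^*)\cdot v^*=0$ and the constraint is curved. In Case~3 the defining inequality at $\mathbf{x}^*+av^*$ reads $a\,\hat\nabla\phi_i(\mathbf{x}^*)\cdot v^*\geq \tfrac{1}{2}a^2(v^*)^TH_i^*v^*$, and if the left side vanishes while $(v^*)^TH_i^*v^*>0$ this is violated for every $a>0$; your claim that the quadratic is ``dominated for small $a$'' presupposes a strictly positive linear term. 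The same obstruction appears in Case~1: take $\phi_1(\mathbf{x})=1-x_1^2-x_2^2$ (concave, so $\mathcal{F}_1=\Gamma_1$ is the closed unit disk), $\mathbf{x}^*=(1,0)$, and $J(\mathbf{x})=\tfrac12\|\mathbf{x}-(1,1)\|^2$. Then $C(\mathbf{x}^*)=\{v:\|v\|=1,\,v_1\leq 0\}$, $\nabla J(\mathbf{x}^*)=(0,-1)$, and $v^*=(0,1)$ with $\nabla J\cdot v^*=-1<0$; yet $\phi_1(1,a)=-a^2<0$ for every $a\neq 0$, so the ray $\mathbf{x}^*+av^*$ never enters $\mathcal{F}_1(\mathbf{x}^*)$. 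Your appeal to Claim~2 of \cref{lemma: existence} does not rescue this: that claim relies on the \emph{strict} inequality $\partial_w\phi_i(\mathbf{x}^r)<0$ supplied by \cref{assump: regularity}, whereas the feasible-direction condition for $v^*$ only yields the non-strict $\hat\nabla\phi_i\cdot v^*\geq 0$.

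The paper sidesteps this difficulty entirely by arguing directly rather than by contraposition: since $\mathbf{x}^*$ minimizes $J$ over the convex set $\mathcal{F}(\mathbf{x}^*)$, the KKT conditions give $\nabla J(\mathbf{x}^*)+\sum_i\lambda_i\hat\nabla\phi_i(\mathbf{x}^*)=0$ with $\lambda_i\leq 0$ for $i\in I$ and $\lambda_j=0$ for $j\notin I$. Dotting with $v^*$ and using $\hat\nabla\phi_i(\mathbf{x}^*)\cdot v^*\geq 0$ immediately yields $\nabla J(\mathbf{x}^*)\cdot v^*\geq 0$, hence $\nabla J(\mathbf{x}^*)\cdot v\geq 0$ for all $v\in C(\mathbf{x}^*)$. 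This avoids any need to push a point along $v^*$ into $\mathcal{F}(\mathbf{x}^*)$. Your argument could in principle be repaired by perturbing $v^*$ toward the interior of $\mathcal{F}(\mathbf{x}^*)$ (which is nonempty by \cref{lemma: existence}), but that requires additional work you have not supplied; the KKT route is both shorter and cleaner.
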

\begin{proof}
We need to show that $\nabla J(\mathbf{x}^*)\cdot v\geq 0$ for all $v\in C(\mathbf{x}^*)$. If $\nabla J(\mathbf{x}^*)=0$, then $\mathbf{x}^*$ is the global optimum of \cref{eq: the problem}. Consider the case $\nabla J(\mathbf{x}^*)\neq0$.  
Claim that $\mathbf{x}^*\in\partial\Gamma$.
First of all, since $\mathbf{x}^*\in \mathcal{F}(\mathbf{x}^*) \subset \Gamma$, $\mathbf{x}^*$ is a feasible point.
Moreover, since $J$ is strictly convex, the optimal point $\mathbf{x}^*$ must be on the boundary of $\mathcal{F}(\mathbf{x}^*)$, i.e., $\mathbf{x}^*\in\partial\mathcal{F}_i(\mathbf{x}^*)$ for some $i$. According to \cref{eq: def cfs self}, \cref{eq: def cfs convex} and \cref{eq: def cfs ncnc}, $\partial\mathcal{F}_i(\mathbf{x}^*)$ equals to $\{\mathbf{x}:\phi_i(\mathbf{x})=0\}$ in case~1, $\{\mathbf{x}:\phi_i(\mathbf{x}^*) + \hat{\nabla}\phi_i(\mathbf{x}^*)(\mathbf{x}-\mathbf{x}^*) = 0\}$ in case 2, and $\{\mathbf{x}:\phi_i(\mathbf{x}^*) + \hat{\nabla}\phi_i(\mathbf{x}^*)(\mathbf{x}-\mathbf{x}^*) = \frac{1}{2}(\mathbf{x}-\mathbf{x}^*)^T H^*_i(\mathbf{x}-\mathbf{x}^*)\}$ in case 3. Then $\mathbf{x}^*\in\partial\mathcal{F}_i(\mathbf{x}^*)$ implies that $\phi_i(\mathbf{x}^*)=0$. Hence $\mathbf{x}^*\in\partial\Gamma$. Let $I = \{i:\phi_i(\mathbf{x}^*)=0\} = \{i:\mathbf{x}^*\in\partial \mathcal{F}_i(\mathbf{x}^*)\}$. 

Consider $v^*:=\arg\min_{v\in C(\mathbf{x}^*)} \nabla J(\mathbf{x}^*)\cdot v$. If the minimum is not unique, use the tie breaking mechanism discussed in \cref{sec: subgradient}. 
Claim that $\hat{\nabla} \phi_i\cdot v^*\geq 0$ for all $i\in I$. For $i\in I$ such that $\phi_i$ is smooth at $\mathbf{x}^*$, the definition of $C(\mathbf{x}^*)$ implies that $\hat{\nabla} \phi_i(\mathbf{x}^*)\cdot v^* = \nabla \phi_i(\mathbf{x}^*)\cdot v^*\geq 0$. For $i\in I$ such that $\phi_i$ is not smooth at $\mathbf{x}^*$, $\hat{\nabla} \phi_i (\mathbf{x}^*)\cdot v^*\geq 0$ since $\hat{\nabla}\phi_i\in DF_i:=\{d\in D\phi_{i}|d\cdot v^*\geq 0\}$. On the other hand, since $\mathbf{x}^*$ is the optimal solution of the smooth optimization $\min_{\mathbf{x}\in\mathcal{F}(\mathbf{x}^*)}J(\mathbf{x})$ , the KKT condition is satisfied,
\begin{equation*}
\nabla J(\mathbf{x}^*)+\sum_{i=1}^{N}\lambda_i\hat{\nabla} \phi_i(\mathbf{x}^*) = 0.
\end{equation*}
The complementary slackness condition implies that $\lambda_i\leq 0$ for $i\in I$ and $\lambda_j=0$ for $j\notin I$. Hence
\begin{equation*}
\nabla J(\mathbf{x}^*)\cdot v^* = - \sum_{i=1}^{N}\lambda_i\hat{\nabla} \phi_i(\mathbf{x}^*)\cdot v^* = -\sum_{i\in I}\lambda_i\hat{\nabla} \phi_i(\mathbf{x}^*)\cdot v^* \geq 0.
\end{equation*}
Thus $J(\mathbf{x}^*)\cdot v\geq 0$ for all $v\in C(\mathbf{x}^*)$. So $\mathbf{x}^*$ is a strong local optimum of \cref{eq: the problem}.
\end{proof}

\begin{remark}
\cref{lemma: existence} and \cref{lemma: fixed pt} imply that a feasible $\mathbf{x}^r$ can always be improved by optimizing over the convex feasible set $\mathcal{F}(\mathbf{x}^r)$ if $\mathbf{x}^r$ itself is not a strong local optimum. 
However, the existence of nonempty convex feasible set for an infeasible reference point is more intricate, which is deeply related to the choice of the functions $\phi_i$'s. The design considerations of $\phi_i$'s such that a nonempty convex feasible set always exists will be addressed in \cref{sec: mobile robot}. 
\label{remark: existence}
\end{remark}

\begin{lemma}[Strong descent]
For any feasible $\mathbf{x}^{(k)}$, the descent of the objective function satisfies that $\nabla J(\mathbf{x}^{(k+1)})(\mathbf{x}^{(k)}-\mathbf{x}^{(k+1)})\geq 0$. Moreover, if $J(\mathbf{x}^{(k+1)})=J(\mathbf{x}^{(k)})$, then $\mathbf{x}^{(k+1)}=\mathbf{x}^{(k)}$.\label{lem: strong descent}
\end{lemma}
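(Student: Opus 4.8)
The plan is to read off both claims from the defining property of $\mathbf{x}^{(k+1)}$ as the minimizer of the strictly convex function $J$ over the convex feasible set $\mathcal{F}^{(k)}:=\mathcal{F}(\mathbf{x}^{(k)})$, combined with the fact from \cref{lemma: existence} that $\mathbf{x}^{(k)}\in\mathcal{F}^{(k)}$. First I would note that $\mathcal{F}^{(k)}$ is closed and convex (it is a finite intersection of the sets in \cref{eq: def cfs self}, \cref{eq: def cfs convex}, \cref{eq: def cfs ncnc}, each of which is closed and convex), and nonempty by \cref{lemma: existence}; since $J$ is strictly convex and radially unbounded (\cref{assump: cost}), the minimizer $\mathbf{x}^{(k+1)}$ exists and is unique.

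For the first inequality, I would invoke the first-order optimality condition for minimizing a smooth function over a convex set: $\nabla J(\mathbf{x}^{(k+1)})\cdot(\mathbf{y}-\mathbf{x}^{(k+1)})\geq 0$ for every $\mathbf{y}\in\mathcal{F}^{(k)}$. Taking $\mathbf{y}=\mathbf{x}^{(k)}$, which lies in $\mathcal{F}^{(k)}$ by \cref{lemma: existence}, yields exactly $\nabla J(\mathbf{x}^{(k+1)})(\mathbf{x}^{(k)}-\mathbf{x}^{(k+1)})\geq 0$.

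For the second claim, suppose $J(\mathbf{x}^{(k+1)})=J(\mathbf{x}^{(k)})$. Then $\mathbf{x}^{(k)}$ also attains the minimum of $J$ over $\mathcal{F}^{(k)}$, so by uniqueness of the minimizer of a strictly convex function on a convex set we get $\mathbf{x}^{(k+1)}=\mathbf{x}^{(k)}$. (Equivalently, if they differed, the midpoint would lie in $\mathcal{F}^{(k)}$ by convexity and, by strict convexity, have strictly smaller cost than $J(\mathbf{x}^{(k+1)})$, contradicting optimality.)

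There is no substantive obstacle here; the only points requiring care are citing \cref{lemma: existence} to guarantee both $\mathbf{x}^{(k)}\in\mathcal{F}^{(k)}$ and nonemptiness, and checking that $\mathcal{F}^{(k)}$ is closed and convex so that the first-order condition and uniqueness apply — all of which are immediate from the constructions in \cref{sec: find cfs} and \cref{assump: cost}.
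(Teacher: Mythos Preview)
Your proposal is correct and follows essentially the same approach as the paper. The only cosmetic difference is that the paper re-derives the first-order optimality condition from scratch (by a contradiction argument showing $\mathcal{F}^{(k)}$ lies in the half-space $\{\mathbf{x}:\nabla J(\mathbf{x}^{(k+1)})\cdot(\mathbf{x}-\mathbf{x}^{(k+1)})\geq 0\}$), whereas you cite it as a standard fact; the second claim is handled identically via strict convexity and uniqueness of the minimizer.
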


\begin{figure}[t]
\begin{center}
\includegraphics[width=3.3cm]{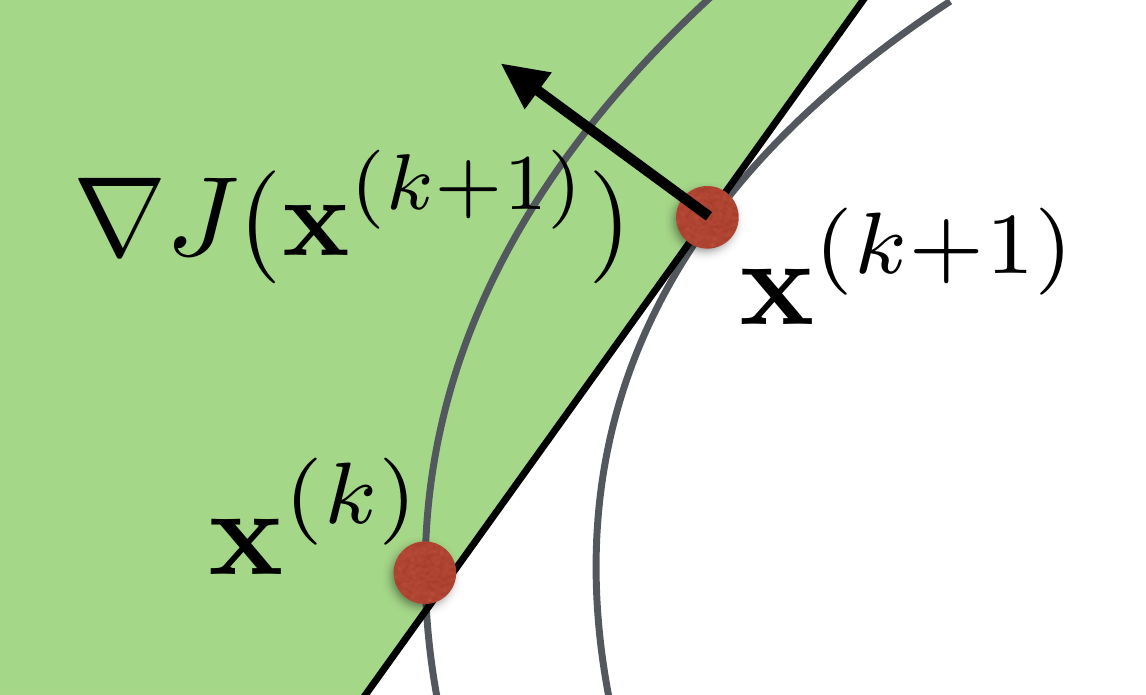}
\caption{The descent of the sequence $\{\mathbf{x}^{(k)}\}$.}
\label{fig: descent}
\end{center}
\end{figure}

\begin{proof}
Claim that $\mathcal{F}(\mathbf{x}^{(k)})$ is a subset of the half space $H:=\{\mathbf{x}\mid\nabla J(\mathbf{x}^{(k+1)})\cdot (\mathbf{x}-\mathbf{x}^{(k+1)})\geq 0\}$ as shown by the shaded area in \cref{fig: descent}. If not, there must be some $\hat{\mathbf{x}}\in \mathcal{F}(\mathbf{x}^{(k)})$ such that $\nabla J(\mathbf{x}^{(k+1)})\cdot(\hat{\mathbf{x}}-\mathbf{x}^{(k+1)})< 0$. Let $v:= \hat{\mathbf{x}}-\mathbf{x}^{(k+1)}$. Since $\mathcal{F}(\mathbf{x}^{(k)})$ is convex, then $\mathbf{x}^{(k+1)} + av\in \mathcal{F}(\mathbf{x}^{(k)})$ for $a\in[0,1]$. Since $J$ is smooth,  there exists a positive constant $c>0$ such that $J(\mathbf{x}^{(k+1)} + av) \leq J(\mathbf{x}^{(k+1)}) + a\nabla J(\mathbf{x}^{(k+1)})\cdot v + c a^2 \|v\|^2$. When $a$ is sufficiently small, the right-hand side of the inequality is strictly smaller than $J(\mathbf{x}^{(k+1)})$. Then $J(\mathbf{x}^{(k+1)} + av) < J(\mathbf{x}^{(k+1)})$, which contradicts with the fact that $\mathbf{x}^{(k+1)}$ is the minimum of $J$ in the convex feasible set $\mathcal{F}(\mathbf{x}^{(k)})$. 
Hence the claim is true. Since $\mathbf{x}^{(k)}\in \mathcal{F}(\mathbf{x}^{(k)})\subset H$, then $\nabla J(\mathbf{x}^{(k+1)})(\mathbf{x}^{(k)}-\mathbf{x}^{(k+1)})\geq 0$. 
Moreover, since $J$ is strictly convex, $J(\mathbf{x})> J(\mathbf{x}^{(k+1)})$ for all $\mathbf{x}\in H\setminus\{\mathbf{x}^{(k+1)}\}$. Hence $J(\mathbf{x}^{(k+1)})=J(\mathbf{x}^{(k)})$ implies $\mathbf{x}^{(k)}=\mathbf{x}^{(k+1)}$.
\end{proof}

\begin{proposition}[Convergence of strictly descending sequence]
Consider the sequence $\{\mathbf{x}^{(k)}\}$ generated by \cref{alg: cfs}. If $J(\mathbf{x}^{(1)})> J(\mathbf{x}^{(2)}) >\cdots$, then the sequence $\{\mathbf{x}^{(k)}\}$ converges to a weak local optimum $\mathbf{x}^*$ of \cref{eq: the problem}.\label{lemma: cauchy}
\end{proposition}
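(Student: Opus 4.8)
The plan is to run a monotone-convergence argument on the cost, use compactness of a sublevel set to extract cluster points, show consecutive iterates coalesce, and then identify the cluster points via the KKT conditions of the convex subproblems \cref{eq: cfs qp}.

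First, since $J(\mathbf{x}^{(1)})>J(\mathbf{x}^{(2)})>\cdots$ by hypothesis and $J\ge 0$ by \cref{assump: cost}, the cost sequence converges to some $J^*\ge 0$. By \cref{assump: cost}, $J$ is radially unbounded, so the sublevel set $\Omega:=\{\mathbf{x}:J(\mathbf{x})\le J(\mathbf{x}^{(1)})\}$ is compact; as every $\mathbf{x}^{(k)}\in\Omega$, the sequence is bounded and has accumulation points. Next I would show $\|\mathbf{x}^{(k+1)}-\mathbf{x}^{(k)}\|\to 0$ by contradiction: otherwise, extracting twice from a subsequence in $\Omega$, $\mathbf{x}^{(k_j)}\to\mathbf{a}$ and $\mathbf{x}^{(k_j+1)}\to\mathbf{b}$ with $\mathbf{a}\neq\mathbf{b}$; continuity of $J$ and convergence of the costs give $J(\mathbf{a})=J(\mathbf{b})=J^*$, while \cref{lem: strong descent} gives $\nabla J(\mathbf{x}^{(k_j+1)})(\mathbf{x}^{(k_j)}-\mathbf{x}^{(k_j+1)})\ge 0$, hence in the limit $\nabla J(\mathbf{b})(\mathbf{a}-\mathbf{b})\ge 0$; strict convexity then forces $J^*=J(\mathbf{a})>J(\mathbf{b})+\nabla J(\mathbf{b})(\mathbf{a}-\mathbf{b})\ge J(\mathbf{b})=J^*$, a contradiction. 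Consequently the accumulation set $A$ of $\{\mathbf{x}^{(k)}\}$ is compact and connected, and $A\subset\Gamma\cap\{\mathbf{x}:J(\mathbf{x})=J^*\}$ because each $\mathbf{x}^{(k)}\in\mathcal{F}(\mathbf{x}^{(k)})\subset\Gamma$ and $\Gamma$ is closed.

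Then I would check that every $\mathbf{a}\in A$ is a weak local optimum of \cref{eq: the problem}. Pick a subsequence with $\mathbf{x}^{(k_j)}\to\mathbf{a}$; since $\|\mathbf{x}^{(k+1)}-\mathbf{x}^{(k)}\|\to 0$ we also get $\mathbf{x}^{(k_j+1)}\to\mathbf{a}$. Because $\mathbf{x}^{(k_j+1)}$ minimizes $J$ over the convex set $\mathcal{F}(\mathbf{x}^{(k_j)})$, whose active boundary pieces at $\mathbf{x}^{(k_j+1)}$ are the surfaces built from $\hat{\nabla}\phi_i(\mathbf{x}^{(k_j)})$ as in \cref{eq: def cfs convex} and \cref{eq: def cfs ncnc}, the KKT system $\nabla J(\mathbf{x}^{(k_j+1)})+\sum_i\lambda_i^{(j)}\hat{\nabla}\phi_i(\mathbf{x}^{(k_j)})=0$ holds with $\lambda_i^{(j)}\le 0$ and complementary slackness. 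The sub-gradients $\hat{\nabla}\phi_i$ are bounded (as noted after \cref{eq: sub-differential}), and the constraint qualification in the third part of \cref{assump: regularity} bounds the multipliers $\lambda_i^{(j)}$; passing to a further subsequence, $\hat{\nabla}\phi_i(\mathbf{x}^{(k_j)})\to d_i\in D\phi_i(\mathbf{a})$ by upper semicontinuity of the sub-differential and $\lambda_i^{(j)}\to\lambda_i\le 0$, while continuity of $\nabla J$ and of the $\phi_i$ preserves the stationarity equation and turns the subproblem's slackness into $\lambda_i\phi_i(\mathbf{a})=0$. Hence $\nabla J(\mathbf{a})+\sum_i\lambda_i d_i=0$, i.e. $\mathbf{a}$ is a weak local optimum.

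Finally I would promote this to convergence of the whole sequence by showing $A$ is a single point, and this is the step I expect to be the main obstacle. If $\nabla J(\mathbf{a})=0$ for some $\mathbf{a}\in A$, then $\mathbf{a}$ is the global minimizer and strict convexity gives $\{\mathbf{x}:J(\mathbf{x})\le J^*\}=\{\mathbf{a}\}$, so $A=\{\mathbf{a}\}$. Otherwise $\mathbf{a}\in\partial\Gamma$ for every $\mathbf{a}\in A$ (by complementary slackness and $\nabla J(\mathbf{a})\neq 0$), and $\{\mathbf{x}:J(\mathbf{x})=J^*\}$ is a smooth hypersurface through $\mathbf{a}$ with outward normal $\nabla J(\mathbf{a})$ supporting the convex body $\{\mathbf{x}:J(\mathbf{x})\le J^*\}$; I would combine this support property with the half-space trapping behind \cref{lem: strong descent} — namely that $\mathbf{x}^{(k+1)}$ minimizes $J$ on the hyperplane $\{\nabla J(\mathbf{x}^{(k+1)})(\mathbf{x}-\mathbf{x}^{(k+1)})=0\}$, so the sublevel set $\{J\le J(\mathbf{x}^{(k+1)})\}$, hence every later iterate, lies on its $\le 0$ side — to conclude that each $\mathbf{a}\in A$ is isolated in $\Gamma\cap\{\mathbf{x}:J(\mathbf{x})=J^*\}\supseteq A$; a connected set of isolated points is a singleton, so $A=\{\mathbf{x}^*\}$ and $\mathbf{x}^{(k)}\to\mathbf{x}^*$. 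I anticipate that the delicate points are making the support/isolation argument precise when $\mathbf{a}$ is only a KKT point rather than a strict local minimizer, establishing upper semicontinuity of $D\phi_i$ for the piecewise-smooth semi-convex $\phi_i$'s, and extracting the uniform multiplier bound from \cref{assump: regularity}.
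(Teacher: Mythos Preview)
Your route diverges from the paper's at both key steps, and the divergence at the convergence step leaves a real gap.

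For full-sequence convergence, the paper does not go through the accumulation set $A$ at all. It combines \cref{lem: strong descent} with strict convexity of $J$ on the compact sublevel set to obtain the quantitative bound $J(\mathbf{x}^{(k)}) \ge J(\mathbf{x}^{(k+1)}) + c\,\|\mathbf{x}^{(k)}-\mathbf{x}^{(k+1)}\|^2$, telescopes it, and derives a direct contradiction with $J(\mathbf{x}^{(k)})\downarrow a$ under the assumption that some iterates stay $\delta$-far from a subsequential limit. This yields $\mathbf{x}^{(k)}\to\mathbf{x}^*$ in one stroke. Your plan instead shows only $\|\mathbf{x}^{(k+1)}-\mathbf{x}^{(k)}\|\to 0$ and then tries to argue that $A$ is a singleton via isolation of weak KKT points on $\{J=J^*\}\cap\Gamma$. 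That isolation is not available in general: a weak local optimum need not be a strict local minimizer of $J$ on $\Gamma$, and the half-space trapping you invoke only yields $\nabla J(\mathbf{a})\cdot(\mathbf{b}-\mathbf{a})\le 0$ for any two cluster points $\mathbf{a},\mathbf{b}$, which already follows from $J(\mathbf{a})=J(\mathbf{b})$ and convexity and produces no contradiction. You correctly anticipate this as the main obstacle, but the proposal offers no mechanism to close it; the paper's quantitative descent estimate is precisely the missing ingredient.

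For weak local optimality, your strategy of passing the subproblem KKT systems to the limit is also different from the paper's and requires a uniform bound on the multipliers $\lambda_i^{(j)}$. That bound does not come for free from \cref{assump: regularity}, since the active constraints of the subproblem at $\mathbf{x}^{(k_j+1)}$ are the linearized ones built from $\hat\nabla\phi_i(\mathbf{x}^{(k_j)})$, not the original $\phi_i$. The paper avoids both this and any abstract upper-semicontinuity appeal: after extracting $\hat\nabla\phi_i(\mathbf{x}^{(k_j)})\to d_i$ and checking $d_i\in D\phi_i(\mathbf{x}^*)$ directly from the definition, it builds the limiting convex set $\mathcal{G}:=\bigcap_i\mathcal{G}_i$ from those $d_i$, verifies $\mathcal{G}=\lim_{k}\mathcal{F}^{(k)}$, and shows by a short comparison argument that $\mathbf{x}^*=\arg\min_{\mathcal{G}}J$. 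The KKT relation $\nabla J(\mathbf{x}^*)+\sum_i\lambda_id_i=0$ then falls out of optimality in $\mathcal{G}$, with no multipliers carried through the limit.
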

\begin{proof}
The monotone sequence $\{J(\mathbf{x}^{(k)})\}_{i=2}^{\infty}$ converges to some value $a\geq \min J$. If $a=\min J$, the sequence $\{\mathbf{x}^{(k)}\}$ converges to the global optima. Consider the case $a>\min J$. Since $J$ is strictly convex by \cref{assump: cost}, the set $\{\mathbf{x}:J(\mathbf{x})\leq J(\mathbf{x}^{(1)})\}$ is compact. Then there exists a subsequence of $\{\mathbf{x}^{(k)}\}$ that converges to $\mathbf{x}^*$ such that $J(\mathbf{x}^*)=a$. Since $\Gamma$ is closed, $\mathbf{x}^*\in \Gamma$. 

We need to show that the whole sequence $\{\mathbf{x}^{(k)}\}$ converges to $\mathbf{x}^*$. Suppose not, then there exists $\delta>0$ such that $\forall K>0$, there exists $k>K$ s.t. $\|\mathbf{x}^{(k)}-\mathbf{x}^*\|\geq \delta$. For any $\epsilon\in(0,\delta)$, there exists $k,j\in\mathbb{N}$ such that $\|\mathbf{x}^{(k)}-\mathbf{x}^*\|\leq \epsilon$ and $\|\mathbf{x}^{(k+j)}-\mathbf{x}^*\|\geq \delta$. Since $J$ is strictly convex, there exists $c>0$ such that
\begin{eqnarray*}
J(\mathbf{x}^{(k)}) &\geq& J(\mathbf{x}^{(k+1)})+\nabla J(\mathbf{x}^{(k+1)})(\mathbf{x}^{(k)}-\mathbf{x}^{(k+1)})+c\|\mathbf{x}^{(k)}-\mathbf{x}^{(k+1)}\|^2\\
&\geq& J(\mathbf{x}^{(k+1)}) + c\|\mathbf{x}^{(k)}-\mathbf{x}^{(k+1)}\|^2\\
&\geq& J(\mathbf{x}^{(k+j)}) + c\|\mathbf{x}^{(k+j-1)}-\mathbf{x}^{(k+j)}\|^2 + \cdots + \|\mathbf{x}^{(k)}-\mathbf{x}^{(k+1)}\|^2\\
&\geq& J(\mathbf{x}^{(k+j)}) + c\|\mathbf{x}^{(k)}-\mathbf{x}^{(k+j)}\|^2\\
&\geq& J(\mathbf{x}^{(k+j)}) + c(\|\mathbf{x}^{(k)}-\mathbf{x}^*\|-\|\mathbf{x}^{(k+j)}-\mathbf{x}^*\|)^2\\
&\geq& a + c(\delta-\epsilon)^2,
\end{eqnarray*}
which contradicts with the fact that $J(\mathbf{x}^{(k)})\rightarrow a$ as $\epsilon\rightarrow 0$. Note that the second inequality is due to $\nabla J(\mathbf{x}^{(k+1)})(\mathbf{x}^{(k)}-\mathbf{x}^{(k+1)})\geq 0$ in \cref{lem: strong descent}. The third inequality is by induction. The fourth inequality and the fifth inequality follow from $\Delta$-inequality. Hence we conclude that $\lim_{k\rightarrow\infty}\mathbf{x}^{(k)}=\mathbf{x}^*$. 

Then we need to show that $\mathbf{x}^*$ is a weak local optimum. The proof can be divided into two steps. First, we show that there is a subsequence of the convex feasible sets $\{\mathcal{F}^{(k)}\}$ that converges point-wise to a suboptimal convex feasible set $\mathcal{G}$ at point $\mathbf{x}^*$. Sub-optimality of $\mathcal{G}$ means that the sub-gradients are not chosen according to \cref{sec: subgradient}. Then we show that $\mathbf{x}^*$ is the minimum of $J$ in $\mathcal{G}$. Thus the KKT condition is satisfied at $\mathbf{x}^*$ and $\mathbf{x}^*$ is a weak local optimum of \cref{eq: the problem}.

Consider any $\phi_i$. 
If $\phi_i$ is smooth at $\mathbf{x}^*$, then it is smooth in a neighborhood of $\mathbf{x}^*$ as $\phi_i$ is assumed to be piece-wise smooth. Then $\hat{\nabla}\phi_i(\mathbf{x}^{(k)})$ converges to $d_i :=\nabla\phi_i(\mathbf{x}^{*})$. If $\phi_i$ is not smooth at $\mathbf{x}^*$, it is still locally Lipschitz due to semi-convexity. Then $\hat{\nabla}\phi_i$ is locally bounded. Hence there is a subsequence $\{\hat{\nabla}\phi_i(\mathbf{x}^{(k_j)})\}_{k_j\in\mathbb{N}}$ that converges to some $d_i\in\mathbb{R}^n$.  Claim that $d_i\in D\phi_i(\mathbf{x}^{*})$. By definition \cref{eq: sub-differential}, for any $v\in \mathbb{R}^n$, $\hat{\nabla}\phi_i(\mathbf{x}^{(k_j)})\cdot v\leq \partial_{v}\phi_i(\mathbf{x}^{(k_j)})$. Since $\phi_i$ is piecewise smooth, then either $\partial_{v}\phi_i(\mathbf{x}^{(k_j)})\rightarrow \partial_{v}\phi_i(\mathbf{x}^{*})$ or $\partial_{v}\phi_i(\mathbf{x}^{(k_j)})\rightarrow -\partial_{-v}\phi_i(\mathbf{x}^{*})$. Since $-\partial_{-v}\phi_i(\mathbf{x}^{*})\leq \partial_{v}\phi_i(\mathbf{x}^{*})$ by \cref{eq: directional derivative}, we have the following inequality,
\begin{equation*}
d_i \cdot v = \lim_{k_j\rightarrow\infty }\hat{\nabla}\phi_i(\mathbf{x}^{(k_j)})\cdot v \leq \lim_{k_j\rightarrow\infty }\partial_{v}\phi_i(\mathbf{x}^{(k_j)}) \leq \partial_{v}\phi_i(\mathbf{x}^{*}).
\end{equation*}
Hence by definition \cref{eq: sub-differential}, $d_i\in D\phi_i(\mathbf{x}^*)$.
Then we can choose a subsequence $\{\mathbf{x}^{(k_n)}\}_{k_n\in\mathbb{N}}$ such that $\phi_i(\mathbf{x}^{(k_n)})$ converges to $\phi_i(\mathbf{x}^{*})$ and $\hat{\nabla}\phi_i(\mathbf{x}^{(k_n)})$ converges to $d_i\in D\phi_i(\mathbf{x}^{*})$ for all $i = 1,\cdots, N$. For simplicity and without loss of generality, we use the same notation for the subsequence as the original sequence in the following discussion.

Define a new convex feasible set $\mathcal{G}_i$ such that $\mathcal{G}_i = \Gamma_i$ if $\phi_i$ is concave,  $\mathcal{G}_i = \{\mathbf{x}:\phi_i(\mathbf{x}^*)+d_i(\mathbf{x}-\mathbf{x}^*)\geq 0\}$ if $\phi_i$ is convex, and $\mathcal{G}_i = \{\mathbf{x}:\phi_i(\mathbf{x}^*)+d_i(\mathbf{x}-\mathbf{x}^*)\geq \frac{1}{2}(\mathbf{x}-\mathbf{x}^*)^T H^*_i(\mathbf{x}-\mathbf{x}^*)\}$ if otherwise. 
Let $\mathcal{G} = \bigcap_{i=1}^N \mathcal{G}_i$. 

Claim that $\mathcal{G}=\lim_{k\rightarrow \infty} \mathcal{F}^{(k)}$ where $\lim_{k\rightarrow \infty} \mathcal{F}^{(k)}:= \bigcap_{k\rightarrow\infty}\bigcup_{j = k}^{\infty} \mathcal{F}^{(k)}$. Consider $z\in \mathcal{G}$. If $\phi_i$ is concave, then $z\in \mathcal{F}_i^{(k)} : = \mathcal{F}_i(\mathbf{x}^{(k)})$ for all $k$ according to \cref{eq: def cfs convex}. If $\phi_i$ is convex, then 
\begin{equation*}
\lim_{k\rightarrow\infty}\left[\phi_i(\mathbf{x}^{(k)}) + \hat{\nabla}\phi_i(\mathbf{x}^{(k)})(z - \mathbf{x}^{(k)})\right] = \phi_i(\mathbf{x}^{*}) + d_i(z - \mathbf{x}^{*}) \geq 0,
\end{equation*}
which implies that $z \in \lim_{k\rightarrow\infty} \mathcal{F}_i^{(k)}$. Similarly, we can show that if $\phi_i$ is neither convex or concave, $z$ also lies in the limit of $\mathcal{F}_i^{(k)}$. Hence $z\in\bigcap_{i = 1}^N \lim_{k\rightarrow \infty} \mathcal{F}_i^{(k)} = \lim_{k\rightarrow \infty} \mathcal{F}^{(k)}$. Since $z$ is arbitrary, $\mathcal{G}\subset\lim_{k\rightarrow \infty} \mathcal{F}^{(k)}$. For any $z\in \lim_{k\rightarrow \infty} \mathcal{F}^{(k)}$, then there exists a sequence $\{z_k\}$ that converges to $z$ such that $z_k\in \mathcal{F}^{(k)}$. For any $i$, if $\phi_i$ is concave, then $z_k\in\mathcal{F}_i^{(k)} =\mathcal{G}_i$. Since $\mathcal{G}_i$ is closed, $z\in \mathcal{G}_i$. If $\phi_i$ is convex, then 
\begin{align*}
 \phi_i(\mathbf{x}^{*}) + d_i(z - \mathbf{x}^{*}) = \lim_{k\rightarrow\infty}\left[\phi_i(\mathbf{x}^{(k)}) + \hat{\nabla}\phi_i(\mathbf{x}^{(k)})(z_k - \mathbf{x}^{(k)})\right] \geq 0,
\end{align*}
which implies that $z\in\mathcal{G}_i$. Similarly, we can show that $z\in\mathcal{G}_i$ if $\phi_i$ is neither convex or concave. Hence $z\in\mathcal{G}$. And we verify that $\mathcal{G}=\lim_{k\rightarrow \infty} \mathcal{F}^{(k)}$.

Claim that $\mathbf{x}^* = \arg\min_{\mathbf{x}\in\mathcal{G}}J(\mathbf{x})$. 
Suppose not, then there exists $z\in\mathcal{G}$ such that $J(z)<J(\mathbf{x}^*)$. For all $\epsilon>0$, there exists $y\in\mathcal{F}^{(k)}$ for some $k\in\mathbb{N}$ such that $\|z-y\|<\epsilon$. Since $J$ is smooth, then $J(y) - J(z) < O(\epsilon)$. Thus $J(y) < J(\mathbf{x}^*)$ when $\epsilon$ is sufficiently small. This contradicts with $J(y) > J(\mathbf{x}^{(k)}) > J(\mathbf{x}^*)$. Hence $J(z)\geq J(\mathbf{x}^*)$ for all $z\in\mathcal{G}$. If there exists $z\in\mathcal{G}\setminus\{\mathbf{x}^*\}$ such that $J(z)=J(\mathbf{x}^*)$, then $\frac{z+\mathbf{x}^*}{2}\in \mathcal{G}$ and $J(\frac{z+\mathbf{x}^*}{2}) < \frac{J(z)+J(\mathbf{x}^*)}{2} = J(\mathbf{x}^*)$ since $\mathcal{G}$ is  convex and $J$ is strictly convex. However, this contradicts with the conclusion that $J(z)\geq J(\mathbf{x}^*)$ for all $z\in\mathcal{G}$. Hence $\mathbf{x}^*$ is the unique minimum of $J$ in the set $\mathcal{G}$. And the KKT condition is satisfied, i.e., $\nabla J(\mathbf{x}^*) +\sum_{i=1}^N \lambda_i d_i = 0$ for $d_i\in D\phi_i(\mathbf{x}^*)$. Then $\mathbf{x}^*$ is a weak local optimum. 

It is worth noting that if all $\phi_i$'s are smooth at $\mathbf{x}^*$, $\mathcal{G} = \mathcal{F}(\mathbf{x}^*)$. Then $\mathbf{x}^*$ is a fixed point, thus a strong local optimum by \cref{lemma: fixed pt}. 
\end{proof}

\begin{remark}
\cref{lem: strong descent} and \cref{lemma: cauchy} justify the adoption  of the terminate condition \cref{eq: terminal condition cost}, which is indeed equivalent to the standard terminate condition \cref{eq: terminal condition converge}, e.g.,  convergence in the objective function implies convergence in the solution. 
\end{remark}

\subsection{Proof of the Main Result}

\begin{proof} [Proof of \cref{thm: converge}]
If $\mathcal{F}^{(0)}$ is nonempty, then $\mathbf{x}^{(1)}\in \Gamma$ can be obtained by solving the convex optimization \cref{eq: cfs qp}. By \cref{lemma: existence}, $\mathcal{F}^{(1)}$ has nonempty interior, then $\mathbf{x}^{(2)}\in \Gamma$ can be obtained. By induction, we can conclude that $\mathbf{x}^{(i)}\in \mathcal{F}^{(i-1)}\subset\Gamma$ for $i=1,2,3,\cdots$.  Moreover, as a better solution is found at each iteration, then $J(\mathbf{x}^{(1)})\geq J(\mathbf{x}^{(2)})\geq \cdots$. This leads to two cases. The first case is that $J(\mathbf{x}^{(K)})= J(\mathbf{x}^{(K+1)})$ for some $K$, while the second case is that the cost keeps decreasing strictly, i.e.,  $J(\mathbf{x}^{(1)})> J(\mathbf{x}^{(2)})> \cdots$. 
In the first case, the condition $J(\mathbf{x}^{(K)})= J(\mathbf{x}^{(K+1)})$ is equivalent to $\mathbf{x}^{(K)}=\mathbf{x}^{(K+1)}$ by \cref{lem: strong descent}. By induction, the algorithm converges in the sense that $\mathbf{x}^{(k)}=\mathbf{x}^{(k+1)}$ and $J(\mathbf{x}^{(k)})= J(\mathbf{x}^{(k+1)})$ for all $k\geq K$. Moreover, as $\mathbf{x}^*:=\mathbf{x}^{(K)}$ is a fixed point, it is a strong local optimum by \cref{lemma: fixed pt}. 
If the cost keeps decreasing, e.g.,  $J(\mathbf{x}^{(1)})> J(\mathbf{x}^{(2)})> \cdots$, then the sequence $\{\mathbf{x}^{(k)}\}$ converges to a weak local optimum $\mathbf{x}^*$ by \cref{lemma: cauchy}.
\end{proof}

\section{Application on Motion Planning for Mobile Robots\label{sec: mobile robot}}
In this section, \cref{alg: cfs} is applied to a motion planning problem for mobile robots \cite{savkin2015safe}. Its application to other systems can be found in \cite{liu2017convex}. The problem will be formulated in \cref{sec: problem statement} and then transformed into the benchmark form \cref{eq: the problem} and \cref{eq: gamma}. The major difficulty in transforming the problem lies in finding the semi-convex function $\phi_i$ to describe the constraints. The method to construct $\phi_i$ in 2D is discussed in \cref{sec: transform the problem} and the resulting convex feasible set is illustrated through examples in \cref{sec: cfs example}. The performance of \cref{alg: cfs} is shown in \cref{sec: performance} and compared to interior point method (ITP) and sequential quadratic programming (SQP).

\subsection{Problem Statement\label{sec: problem statement}}
Suppose a mobile robot needs to plan a trajectory in $\mathbb{R}^2$ from the start point $x_0$ to the goal point $G$ as shown in \cref{fig: motion planning problem}. Let $x_q\in \mathbb{R}^2$ be the position of the robot at time step $q$. Define the decision variable as $\mathbf{x}:=\left[\begin{array}{ccc}x_1^T & \cdots & x_h^T\end{array}\right]^T\in \mathbb{R}^{2h}$ where $h$ is the planning horizon. The whole trajectory is denoted as $\bar{\mathbf{x}}:=\left[\begin{array}{ccc}x_0^T & \mathbf{x}^T & G^T\end{array}\right]^T\in \mathbb{R}^{2(h+2)}$. Define a sequence of selection functions $l_q:\mathbb{R}^{2h}\rightarrow\mathbb{R}^2$ as $x_q=l_q(\mathbf{x})$. The sampling time is  $t_s$. Let the velocity at $q$ be $v_q:=\frac{x_q-x_{q-1}}{t_s}$ and the acceleration at $q$ be $a_q:=\frac{v_q-v_{q-1}}{t_s}$. 

\begin{figure}[t]
\begin{center}
\includegraphics[width=7cm]{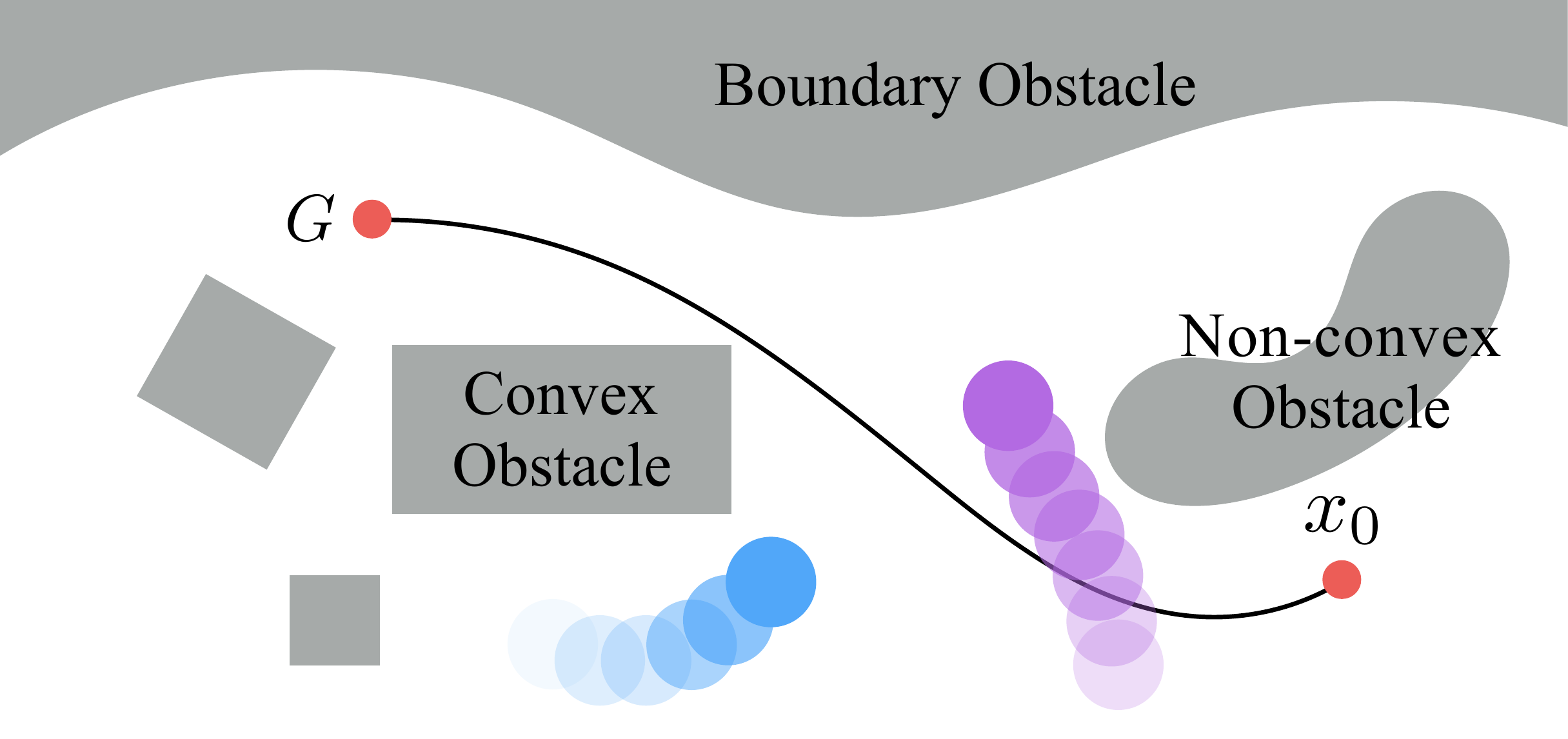}
\caption{The motion planning problem.}
\label{fig: motion planning problem}
\end{center}
\end{figure}

\subsubsection*{The Cost Function}
The cost function of the problem is designed as
\begin{equation}
J(\mathbf{x}) = w_1 \|\mathbf{x}-\mathbf{x}^r\|^2_Q+w_2 \|\mathbf{x}\|^2_S,\label{eq: cost planning}
\end{equation}
where $w_1,w_2\in\mathbb{R}^+$. The first term $ \|\mathbf{x}-\mathbf{x}^r\|^2_Q:=(\bar{\mathbf{x}}-\bar{\mathbf{x}}^r)^T Q (\bar{\mathbf{x}}-\bar{\mathbf{x}}^r)$ penalizes the distance from the target trajectory to the reference trajectory. The second term $\|\mathbf{x}\|^2_S:=\bar{\mathbf{x}}^T S\bar{\mathbf{x}}$ penalizes the properties of the target trajectory itself, e.g.,  length of the trajectory and magnitude of acceleration. 
The matrices $Q,S\in\mathbb{R}^{2(h+2)\times 2(h+2)}$ can be constructed from the following components:
1) matrix for position $Q_1:=I_{2(h+2)}$; 2) matrix for velocity $Q_2:=V^TV$ and 3) matrix for acceleration $Q_3:=A^TA$. Note that $V\in\mathbb{R}^{2(h+1)\times 2(h+2)}$ and $A\in\mathbb{R}^{2h\times 2(h+2)}$ are defined as
\begin{equation*}
V=\footnotesize{\frac{1}{t_s}\left[\begin{array}{ccccc}
I_2 & -I_2 & 0 & \cdots & 0\\
0 &  I_2 & -I_2 &\cdots & 0\\
\vdots & \vdots & \ddots & \ddots &\vdots\\
0 & 0 & \cdots & I_2 & -I_2
\end{array}\right]}, A=\frac{1}{t_s^2}\left[\begin{array}{cccccc}
I_2 & -2I_2 & I_2 & 0 & \cdots & 0\\
0 &  I_2 & -2I_2 & I_2 &\cdots & 0\\
\vdots & \vdots & \ddots & \ddots &\ddots &\vdots\\
0 & 0 &\cdots & I_2 & -2I_2 & I_2
\end{array}\right],
\end{equation*}
which take finite differences of the trajectory $\mathbf{x}$ such that $V\mathbf{x}$ returns the velocity vector and $A\mathbf{x}$ returns the acceleration vector. Then $Q:=\sum_{i=1}^3 c^q_iQ_i$ and $S:=\sum_{i=1}^3 c^s_i Q_i$ where $c^q_i$ and $c^s_i$ are positive constants. \cref{assump: cost} is satisfied.

\subsubsection*{The Constraints}
The obstacles in the environment are denoted as $\mathcal{O}_j\in\mathbb{R}^2$ for $j\in\mathbb{N}$. Each $\mathcal{O}_j$ is simply connected and open with piecewise smooth boundary that does not contain any sharp concave corner\footnote{Note that the obstacles may not be physical, but denote the infeasible area in the state space. A sharp concave corner in $\mathbb{R}^2$ is a concave corner with $360^{\circ}$ tangent angle. Existence of a sharp concave corner will violate \cref{assump: constraint}, since there does not exist a 2D convex neighborhood in $\mathcal{O}_j^c$ at a sharp concave corner.}.  For example, there are seven such obstacles in \cref{fig: motion planning problem} where five of them are static and two are dynamic. Let $\mathcal{O}_j$ denote obstacle $j$ when centered at the origin. The area occupied by obstacle $j$ at time step $q$ is defined as $\mathcal{O}_j(q)$. Define a linear isometry $\mathcal{T}_{j,q}: \mathbb{R}^2\rightarrow \mathbb{R}^2$ such that $\mathcal{O}_j=\mathcal{T}_{j,q}(\mathcal{O}_j(q))$. Then $x_q\notin\mathcal{O}_j(q)$ is equivalent to $\mathcal{T}_{j,q}(x_q)\notin\mathcal{O}_j$. 
Hence the constraint for the optimization is
\begin{equation}
\mathbf{x}\in \Gamma:=\{\mathbf{x}\in\mathbb{R}^{2h}: \mathcal{T}_{j,q}( l_q(\mathbf{x}))\notin \mathcal{O}_j,\forall j,\forall q=1,\cdots, h\}.\label{eq: 2d constraint}
\end{equation}
It is easy to verify that \cref{assump: constraint} is satisfied.

\subsection{Transforming the Problem\label{sec: transform the problem}}

In order to apply \cref{alg: cfs}, a semi-convex decomposition \cref{eq: gamma} satisfying \cref{assump: regularity} needs to be performed. For example, obstacles containing concave corners need to be partitioned into several overlapping obstacles that do not have concave corners as discussed in \cref{sec: analytical constraint}. Without loss of generality, $\mathcal{O}_j$ is assumed to represent obstacles after decomposition such that it is either a convex obstacle, or a boundary obstacle, or a non-convex obstacle as shown in \cref{fig: motion planning problem}. For each $\mathcal{O}_j$, we first construct a simple function $\varphi_j:\mathbb{R}^2\rightarrow\mathbb{R}$  and then use $\varphi_j$ to construct $\phi_i$. The function $\varphi_j$ is continuous, piecewise smooth and semi-convex such that $\mathcal{O}_j=\{x\in\mathbb{R}^2:\varphi_j(x)<0\}$ and $\partial\mathcal{O}_j=\{x\in\mathbb{R}^2:\varphi_j(x)=0\}$. We call $\varphi_j$ a safety index in the following discussion, since it typically measures the distance to an obstacle. The construction of $\varphi_j$ in each case is discussed below.

\subsubsection*{Convex obstacle}  A convex obstacle refers to the case that $\mathcal{O}_j$ is compact and convex. In this case, $\varphi_j$ is defined to be the signed distance function to $\mathcal{O}_j$, i.e., 
\begin{equation}
\varphi_j(x):=\left\{\begin{array}{ll}
\min_{y\in\partial\mathcal{O}_j} \|x-y\| & x\notin\mathcal{O}_j\\
-\min_{y\in\partial\mathcal{O}_j} \|x-y\| & x\in\mathcal{O}_j
\end{array}\right..\label{eq: safety index convex}
\end{equation}

\subsubsection*{Boundary obstacle} A boundary obstacle refers to a non-compact $\mathcal{O}_j$ such that there is an affine parameterization of $\partial \mathcal{O}_j$, e.g.,  if we rotate and align the obstacle properly, there exists a continuous and piecewise smooth semi-convex function $f:\mathbb{R}\rightarrow\mathbb{R}$ such that $p_2=f(p_1)$ describes $\partial \mathcal{O}_j$ where $x=(p_1,p_2)\in\mathbb{R}^2$. Then $\varphi_j$ is defined as the directional distance along $(0,1)$ to the boundary $\partial \mathcal{O}_j$, i.e.,
\begin{equation}
\varphi_j(x) := f(p_1)-p_2.\label{eq: safety index boundary}
\end{equation}

\subsubsection*{Non-convex obstacle} A non-convex obstacle refers to the case that $\mathcal{O}_j$ is compact, but non-convex. As the signed distance function for a non-convex set is not semi-convex, the strategy is to introduce directional distance. 
Denote the smallest convex envelop of the obstacle $\mathcal{O}_j$ as $\widehat{\mathcal{O}}_j$. Then $\varphi_j$ is defined as a signed directional distance function which computes the minimum distance from a point $x$ to $\partial\mathcal{O}_j$ in the direction that is perpendicular to $\partial\widehat{\mathcal{O}}_j$ as shown in \cref{fig: concave contour}. 
Let $\delta$ be a correspondence function that maps a point $y\in\partial\widehat{\mathcal{O}}_j$ to the closest $z\in \partial\mathcal{O}_j$ such that $z-y$ is perpendicular to $\partial\widehat{\mathcal{O}}_j$ at $y$. It is assumed that $\delta$ is bijective and $\|y-\delta(y)\|$ is semi-convex in $y$.
Then
\begin{equation}
\varphi_j(x) = \left\{\begin{array}{ll}
\min_{y\in\partial\widehat{\mathcal{O}}_j} [\|x-y\| + \|y-\delta(y)\|] & x\notin\widehat{\mathcal{O}}_j\\
-\min_{y\in\partial\widehat{\mathcal{O}}_j} [\|x-y\| - \|y-\delta(y)\|] & x\in\widehat{\mathcal{O}}_j
\end{array}\right..\label{eq: safety index non-convex}
\end{equation}

\begin{figure}[t]
\begin{center}
\subfloat[Convex obstacle.]{\includegraphics[width=3cm]{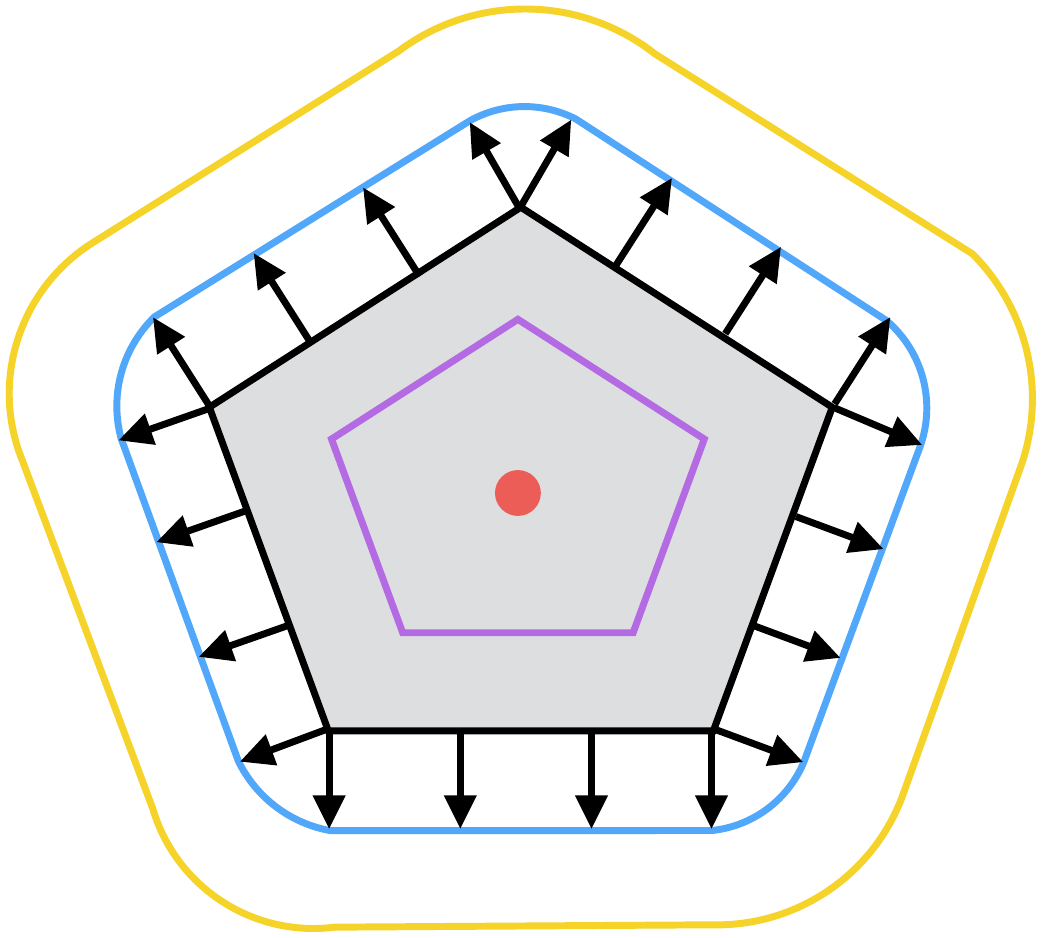}}~~
\subfloat[Concave obstacle.\label{fig: concave contour}]{\includegraphics[width=3.5cm]{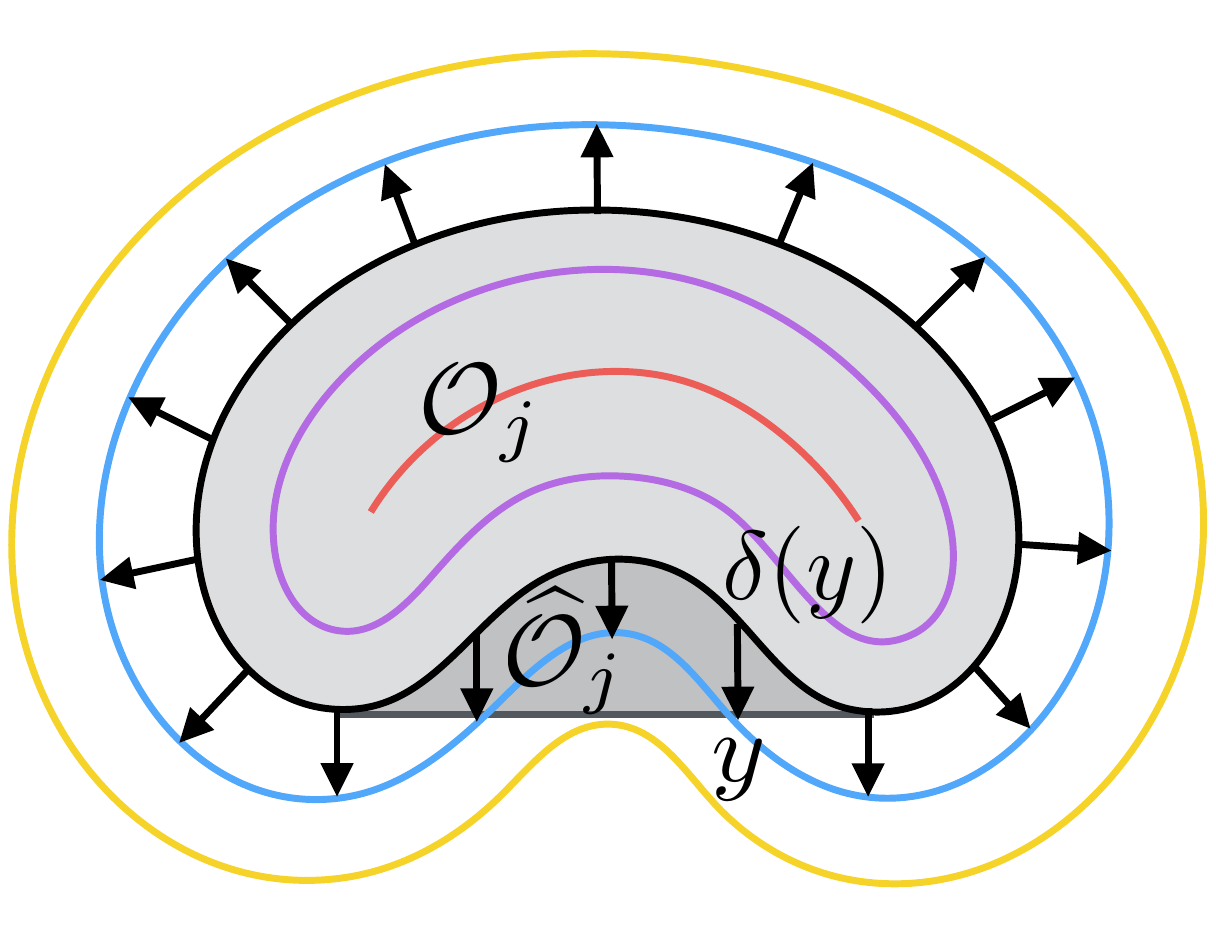}}~~
\subfloat[Boundary obstacle.]{\includegraphics[width=3cm]{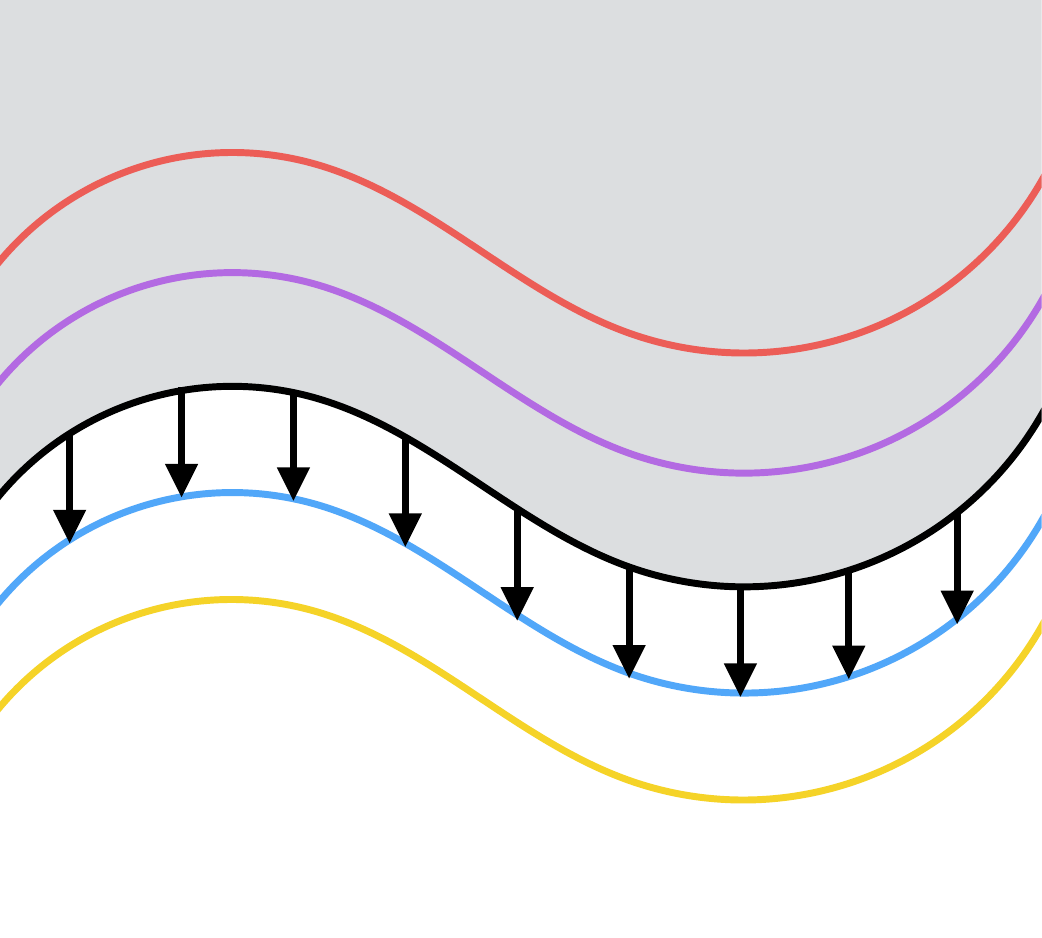}}
\caption{Contours of the safety indices for typical obstacles.}
\label{fig: obstacle contours}
\end{center}
\end{figure}

It is easy to verify that $\varphi_j$ in all three cases are continuous, piecewise smooth, semi-convex, and satisfy the first two arguments in \cref{assump: regularity}. Moreover, $\varphi_j$ is strictly convex for convex obstacles. According to \cref{eq: safety index convex}, \cref{eq: safety index boundary} and \cref{eq: safety index non-convex}, the contours of the safety indices are shown in \cref{fig: obstacle contours}. 
Based on the safety indices, \cref{eq: 2d constraint} can be re-written as
\begin{equation}
\mathbf{x}\in\Gamma = \bigcap_{j,q}\{\mathbf{x}\in\mathbb{R}^{2h}:\varphi_j(\mathcal{T}_{j,q}( l_q(\mathbf{x})))\geq 0\}.\label{eq: def phi}
\end{equation}
Define $\phi_{j,q}(\mathbf{x}):=\varphi_j(\mathcal{T}_{j,q}( l_q(\mathbf{x})))$ for all $j$ and $q$. Let $\Gamma_{j,q}:=\{\mathbf{x}\in\mathbb{R}^{2h}:\phi_{j,q}(\mathbf{x})\geq 0\}$. When the obstacles do not overlap with each other at every time step as shown in \cref{fig: motion planning problem}, the third argument in \cref{assump: regularity} is satisfied. 
Then $\{\phi_{j,q}\}_{j,q}$ is a decomposition of $\Gamma$ that satisfies \cref{assump: regularity}. 
The convex feasible set $\mathcal{F}:=\bigcap_{j,q}\mathcal{F}_{j,q}$ can be constructed according to the discussion in \cref{sec: find cfs}. And \cref{alg: cfs} can be applied.

\subsection{The Convex Feasible Set - Examples\label{sec: cfs example}} 
This section illustrates the configuration of convex feasible sets with examples. 
Since $\phi_{j,q}$ only depends on $x_q$, $\mathcal{F}_{j,q}(\mathbf{x}^r)$ only constraint $x_q$. For example, according to \cref{eq: def cfs convex}, the convex feasible set for a convex $\phi_{j,q}$ is 
\begin{equation}
\mathcal{F}_{j,q}(\mathbf{x}^r)=\{\mathbf{x}:\varphi_{j}(\mathcal{T}_{j,q}(x_q^r))+\hat \nabla \varphi_{j} (\mathcal{T}_{j,q}(x_q^r))\nabla\mathcal{T}_{j,q}(x_q^r)(x_q-x_q^r)\geq 0\}.\label{eq: reduced order}
\end{equation}
For simplicity, define $\mathcal{F}^q_{j,q}(\mathbf{x}^r):=l_q(\mathcal{F}_{j,q}(\mathbf{x}^r))\in\mathbb{R}^2$. In the following discussion, we will first illustrate $\mathcal{F}^q_{j,q}(\mathbf{x}^r)$ in $\mathbb{R}^2$ and then $\bigcap_q\mathcal{F}_{j,q}(\mathbf{x}^r)$ in $\mathbb{R}^{2h}$.

\subsubsection*{The convex feasible set in $\mathbb{R}^2$}
We illustrate the convex feasible set for one obstacle at one time step. For simplicity, subscripts $j$ and $q$ are removed and $\mathcal{T}_{j,q}$ is assumed to be identity.
Consider a polygon obstacle with vertices $a=(1,1)$, $b=(-1,1)$, $c=(-1,-1)$ and $d=(1,-1)$ as shown in \cref{fig: cfs2}. Let $\|x\|_\infty$ denote the $l_\infty$ norm, i.e., $\|x\|_\infty:=\max\{|p_1|,|p_2|\}$. Then according to \cref{eq: safety index convex}, 
\begin{equation}
\varphi(x) = \left\{\begin{array}{ll}
\max\{-1-p_1,p_1-1,p_2-1,-1-p_2\} & \|x\|_\infty\leq1\\
\min\{\|x-a\|,\|x-b\|,\|x-c\|,\|x-d\|\} & |p_1|>1, |p_2|>1\\
\min\{|p_1-1|,|p_1+1|\} & |p_1|>1,|p_2|<1\\
\min\{|p_2-1|,|p_2+1|\} & |p_1|<1,|p_2|>1
\end{array}\right..
\end{equation}

For a reference point $x^r=(-1.5,-1.5)$, $\varphi(x^r)=\frac{\sqrt{2}}{2}$ and $\nabla\varphi(x^r)=\frac{\sqrt{2}}{2}[-1,-1]$. The convex feasible set\footnote{When implementing \cref{alg: cfs} in software, there is no need to explicitly compute the safety indices as shown in the examples. The solver just needs to know the rules \cref{eq: safety index convex} \cref{eq: safety index boundary} and \cref{eq: safety index non-convex} in computing those indices. The gradients or hessians of the safety indices can be computed numerically.} is $\mathcal{F}(x^r) = \{x:\varphi(x^r)+\nabla\varphi(x^r)(x-x^r)\geq 0\} = \{x:\frac{\sqrt{2}}{2}(-p_1-p_2+2)\geq0\}$. The bowl-shaped surface in \cref{fig: cfs2} illustrates the safety index $\varphi$. The plane that is tangent to the safety index satisfies the function $\frac{\sqrt{2}}{2}(-p_1-p_2+2)=0$. Since $\varphi$ is convex, the tangent plane is always below $\varphi$. The convex feasible set can be regarded as the projection of the positive portion of the tangent plane onto the zero level set.

\begin{figure}[t]
\begin{center}
\subfloat[Convex obstacle.\label{fig: cfs2}]{\includegraphics[width=6cm]{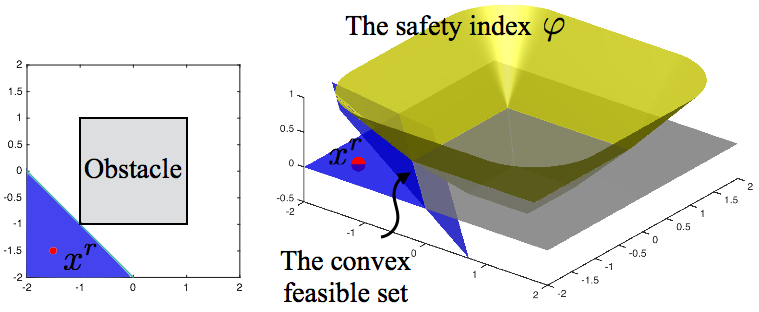}}~~
\subfloat[Non-convex obstacle.\label{fig: cfs1}]{\includegraphics[width=6cm]{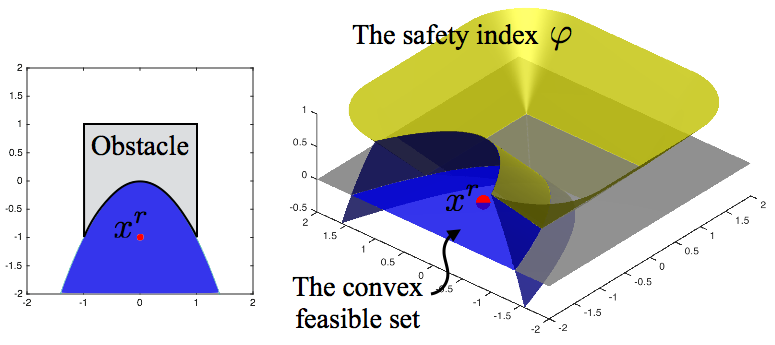}}
\caption{Illustration of the convex feasible set in $\mathbb{R}^2$.}
\end{center}
\end{figure}

Consider the case that points $c$ and $d$ are not connected by a straight line, but a concave curve $p_2=-(p_1)^2$ as shown in \cref{fig: cfs1}. Then according to \cref{eq: safety index non-convex}, 
\begin{equation}
\varphi(x) = \left\{\begin{array}{ll}
\max\{-(p_1)^2-p_2,p_1-1,p_2-1,-1-p_2\} & \|x\|_\infty\leq1, p_2\geq-(p_1)^2\\
\min\{\|x-a\|,\|x-b\|,\|x-c\|,\|x-d\|\} & |p_1|>1, |p_2|>1\\
\min\{|p_1-1|,|p_1+1|\} & |p_1|>1,|p_2|<1\\
p_2-1 & p_2>1\\
-(p_1)^2-p_2 & p_2<-(p_1)^2
\end{array}\right..
\end{equation}

The hessian of $\varphi$ is bounded below by $-H^*=[-2,0;0,0]$. For a reference point $x^r=(0,-1)$, $\varphi(x^r)=1$ and $\nabla\varphi(x^r)=[0,-1]$. The convex feasible set is $\mathcal{F}(x^r) = \{x:\varphi(x^r)+\nabla\varphi(x^r)(x-x^r)\geq \frac{1}{2}(x-x^r)H^*(x-x^r)\} = \{x:-p_2-(p_1)^2\geq 0\}$. The bowl-shaped surface in \cref{fig: cfs1} illustrates the safety index $\varphi$. The parabolic surface that is tangent to the safety index represents the function $-p_2-(p_1)^2=0$. The convex feasible set is the projection of the positive portion of the surface onto the zero level set.

\subsubsection*{The convex feasible set in higher dimension}
We illustrate the convex feasible set for one obstacle over the entire time horizon. The obstacle is shown in \cref{fig: time aug 2d}, which is a translated version of the obstacle in \cref{fig: cfs2}. The reference trajectory violates the obstacle avoidance constraint. \cref{fig: time aug 3d} shows the convex feasible sets computed for each time step. Those sets formulate a corridor  around the time-augmented obstacle. A new trajectory will be computed in the corridor. Although the projection of the corridor into $\mathbb{R}^2$ is not convex, each time slice of the corridor is convex. In $\mathbb{R}^{2h}$, those slices are sticked together orthogonally, hence formulate a convex subset of $\mathbb{R}^{2h}$.

\begin{figure}[t]
\begin{center}
\subfloat[The reference trajectory.\label{fig: time aug 2d}]{\includegraphics[width=3.7cm]{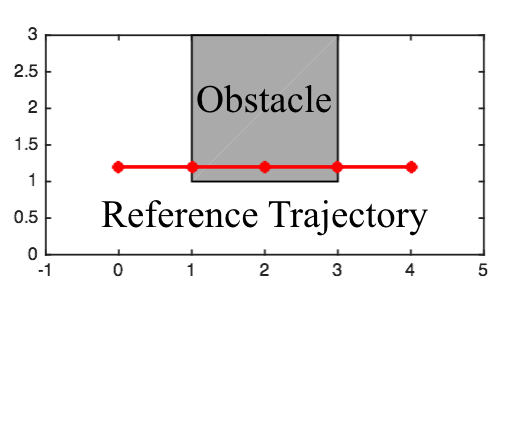}}~~
\subfloat[The convex feasible set over time.\label{fig: time aug 3d}]{\includegraphics[width=5cm]{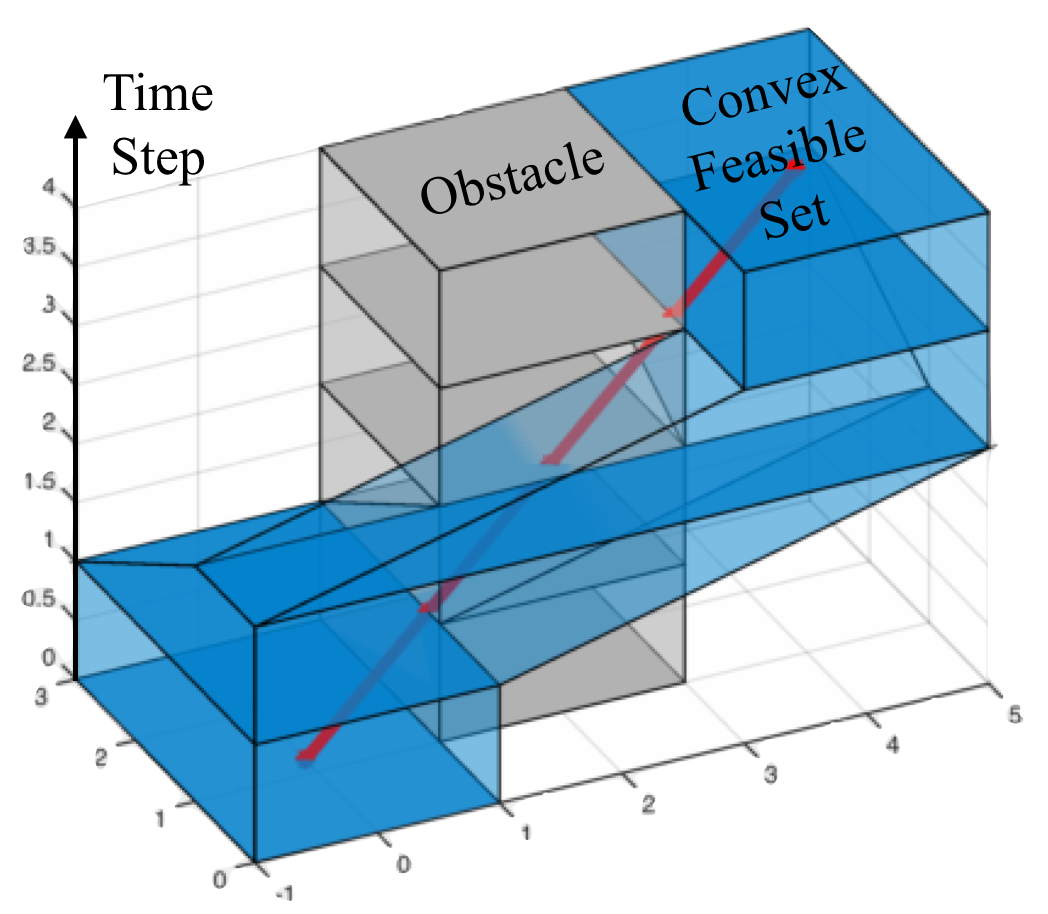}}
\caption{Illustration of the convex feasible set in $\mathbb{R}^{2h}$.}
\label{fig: time aug}
\end{center}
\end{figure}

As pointed out in \cref{remark: existence}, for an infeasible reference trajectory, when there are multiple obstacles, the existence of a corridor that bypasses all time-augmented obstacles under the proposed algorithm is hard to guarantee. In \cref{lemma: feasibility new}, we show that a convex feasible set has nonempty interior for any reference trajectory if certain geometric conditions are satisfied.

\begin{lemma} [Feasibility] \label{lemma: feasibility new}
If all obstacles are convex and have disjoint closures, i.e., $\bar{\mathcal{O}}_i(q)\bigcap\bar{\mathcal{O}}_j(q)=\emptyset$ for all $q$ and $i\neq j$, then the convex feasible set $\mathcal{F}(\mathbf{x}^r)$ has nonempty interior for all $\mathbf{x}^r\in\mathbb{R}^n$ when $\phi_{j,q}$ is chosen according to \cref{eq: safety index convex} and \cref{eq: def phi}.\label{lem: convex feasibility}
\end{lemma}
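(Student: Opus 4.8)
The plan is to reduce the statement to a one–time–step assertion in $\mathbb{R}^2$ and then to produce a single interior point of the resulting intersection of half–planes, the disjointness hypothesis being what makes this possible.

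First I would reduce dimension. Since $\phi_{j,q}(\mathbf{x})=\varphi_j(\mathcal{T}_{j,q}(l_q(\mathbf{x})))$ depends on $\mathbf{x}$ only through the block $x_q=l_q(\mathbf{x})$ and, $\mathcal{T}_{j,q}$ being an isometry, equals the signed distance from $x_q$ to $\mathcal{O}_j(q)$ (a convex function, since $\mathcal{O}_j(q)$ is compact and convex), Case~2 of \cref{sec: find cfs} applies and, by \cref{eq: def cfs convex}--\cref{eq: reduced order}, the constraint $\mathcal{F}_{j,q}(\mathbf{x}^r)$ only restricts the $x_q$–block to a half–plane $\mathcal{H}_{j,q}\subset\mathbb{R}^2$. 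Hence, under the identification $\mathbb{R}^{2h}\cong(\mathbb{R}^2)^h$, $\mathcal{F}(\mathbf{x}^r)=K_1\times\cdots\times K_h$ with $K_q:=\bigcap_j\mathcal{H}_{j,q}$, so it suffices to show each $K_q$ has nonempty interior in $\mathbb{R}^2$. Fix $q$ and write $y:=x_q^r$, $O_j:=\mathcal{O}_j(q)$.

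Next I would describe $\mathcal{H}_{j,q}$ explicitly. Let $z_j$ be the point of $\partial O_j$ nearest $y$ (the projection of $y$ onto $\bar O_j$ when $y\notin O_j$). Using the standard fact that the (sub)gradient of the signed distance to a convex set at $y$ is an outward unit normal $n_j$ of $\bar O_j$ at $z_j$, a short computation from \cref{eq: def cfs convex} shows $\mathcal{H}_{j,q}=\{x:n_j^{\top}(x-z_j)\ge 0\}$, the supporting half–plane of $\bar O_j$ at $z_j$, with $O_j$ in the open complementary half–plane. In particular $n_j^{\top}(y-z_j)=\operatorname{dist}(y,\bar O_j)$ when $y\notin O_j$, so $y\in\mathcal{H}_{j,q}^{o}$ when $y\notin\bar O_j$ and $y\in\partial\mathcal{H}_{j,q}$ when $y\in\partial O_j$. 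By the disjoint–closures hypothesis $y$ lies in $\bar O_j$ for at most one index; call it $i$ if it exists. Set $\epsilon_0:=\min\{\operatorname{dist}(y,\bar O_j):y\notin\bar O_j\}>0$ (positive because there are finitely many obstacles and each $\bar O_j$ is closed), so that $B(y,\epsilon_0)\subset\bigcap_{j\ne i}\mathcal{H}_{j,q}^{o}$. If no such $i$ exists, $B(y,\epsilon_0)\subset K_q^{o}$. If $y\in\partial O_i$, the line $\partial\mathcal{H}_{i,q}$ passes through $y$, so $B(y,\epsilon_0)\cap\mathcal{H}_{i,q}^{o}$ is a nonempty open subset of $K_q$. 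If $y\in O_i$, the line $\partial\mathcal{H}_{i,q}$ passes through $z_i$ at distance $\operatorname{dist}(y,\partial O_i)$ from $y$, so as soon as $\operatorname{dist}(y,\partial O_i)<\epsilon_0$ the ball $B(y,\epsilon_0)$ crosses that line and again meets $\mathcal{H}_{i,q}^{o}$. In every case $K_q^{o}\ne\emptyset$, whence $\mathcal{F}(\mathbf{x}^r)=K_1\times\cdots\times K_h$ has nonempty interior.

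The step I expect to be the main obstacle is the estimate $\operatorname{dist}(y,\partial O_i)<\epsilon_0$ for $y\in O_i$, which is exactly where the hypothesis is used: any segment from $y\in O_i$ to a point of $\bar O_j$ ($j\ne i$) must cross $\partial O_i$, so $\operatorname{dist}(y,\bar O_j)\ge\operatorname{dist}(y,\partial O_i)+\operatorname{dist}(\bar O_i,\bar O_j)$, and $\operatorname{dist}(\bar O_i,\bar O_j)>0$ because $\bar O_i$ is compact, $\bar O_j$ is closed, and they are disjoint; minimizing over the finitely many $j\ne i$ gives $\epsilon_0\ge\operatorname{dist}(y,\partial O_i)+\min_{j\ne i}\operatorname{dist}(\bar O_i,\bar O_j)>\operatorname{dist}(y,\partial O_i)$. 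A secondary point requiring care is that, when $y$ is interior to an obstacle, the subgradient selected in \cref{eq: def cfs convex} is taken to be a unit outward normal (in particular nonzero), so that $\mathcal{H}_{i,q}$ is a genuine half–plane whose bounding line lies at distance exactly $\operatorname{dist}(y,\partial O_i)$ from $y$; this is automatic where the signed distance is differentiable and, otherwise, is arranged by taking the normal at a nearest boundary point.
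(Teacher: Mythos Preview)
Your proposal is correct and follows essentially the same approach as the paper: reduce to a per-time-step problem in $\mathbb{R}^2$, recognize each $\mathcal{F}_{j,q}^q$ as a half-plane tangent to $\partial\mathcal{O}_j(q)$ at the nearest boundary point, enclose the reference point in a ball of radius equal to the minimum distance to the ``safe'' obstacles, and use the disjoint-closures hypothesis to show that this radius strictly exceeds the distance from $y$ to the boundary of the obstacle containing it. Your segment-crossing justification of the key inequality $\epsilon_0>\operatorname{dist}(y,\partial O_i)$ is a more explicit version of what the paper states in one line, and your explicit flagging of the unit-subgradient issue on the medial axis is a point the paper glosses over by simply asserting $\|\hat\nabla\varphi_j\|=1$.
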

\begin{proof}
Considering \cref{lemma: existence}, we only need to prove that $\mathcal{F}(\mathbf{x}^r)$ has nonempty interior for infeasible $\mathbf{x}^r$. Infeasibility of $\mathbf{x}^r$ implies that some $\phi_{j,q}(\mathbf{x}^r)$ is negative. Fix $q$, since $\bar{\mathcal{O}}_j(q)$'s are disjoint, only one $\phi_{j,q}$ can be negative. Without loss of generality, suppose $\phi_{1,q}(\mathbf{x}^r)<0$ and $\phi_{j,q}(\mathbf{x}^r)\geq 0$ for $j\geq 2$. Since the safety index $\varphi_j$ is a signed distance function in \cref{eq: safety index convex} and $\mathcal{T}_{j,q}$ is an isometry, $\|\hat\nabla\varphi_j\| = 1$ and $\|\mathcal{T}_{j,q}\| = 1$. Then the ball $B(x_q^r, \varphi_{j}(\mathcal{T}_{j,q}(x_q^r)))$ is a subset of $\mathcal{F}_{j,q}^q(\mathbf{x}^r)$ according to \cref{eq: reduced order} for all $j$ and $q$. Hence $B(x_q^r, d^*)\subset \bigcap_{j\geq 2}\mathcal{F}_{j,q}^q(\mathbf{x}^r)$ where $d^* = \min_{j\geq 2}\varphi_{j}(\mathcal{T}_{j,q}(x_q^r))$ is the minimum distance to $\bigcup_{j\geq 2}\mathcal{O}_j(q)$. 
According to \cref{eq: safety index convex}, $\mathcal{F}^q_{1,q}(\mathbf{x}^r)$ is tangent to $\partial\mathcal{O}_1(q)$ at a point $x^*$ such that $\varphi_{1}(\mathcal{T}_{1,q}(x_q^r))= -\|x_q^r-x^*\|$ as shown in \cref{fig: feasibility convex}. Since $\bar{\mathcal{O}}_1(q)\bigcap\bar{\mathcal{O}}_j(q)=\emptyset$ for $j\geq 2$, then $d^* > -\varphi_{1}(\mathcal{T}_{1,q}(x_q^r)) = \|x^r_q-x^*\|$, which implies that the set $\mathcal{F}_{1,q}^q(\mathbf{x}^r)\bigcap B(x_q^r, d^*)$ has nonempty interior. Then $\bigcap_j \mathcal{F}^q_{j,q}(\mathbf{x}^r)$ has nonempty interior. So $\mathcal{F}(\mathbf{x}^r)=\oplus_q(\bigcap_j \mathcal{F}^q_{j,q}(\mathbf{x}^r))$ has nonempty interior where $\oplus$ means direct sum. 
\end{proof}
\begin{figure}[t]
\begin{center}
\includegraphics[width=4cm]{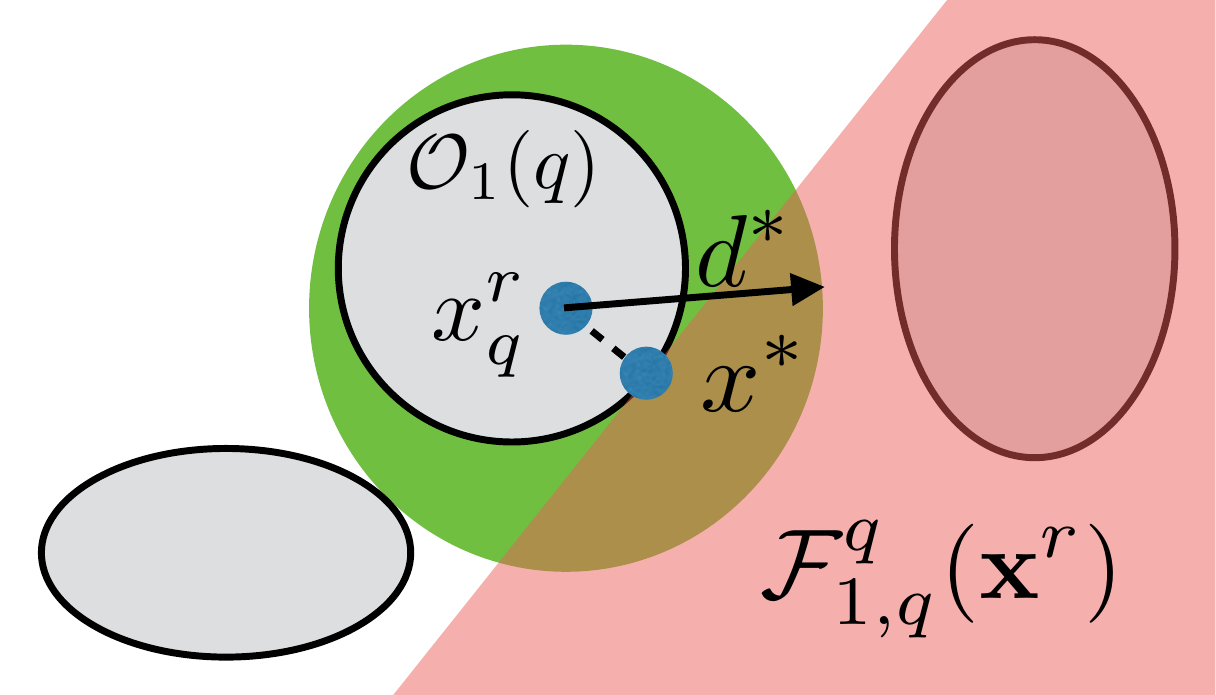}
\caption{Existence of convex feasible set for an infeasible reference point.}
\label{fig: feasibility convex}
\end{center}
\end{figure}

\begin{remark}
\cref{lem: convex feasibility} implies that $\mathcal{F}^{(0)}$ is nonempty for any $\mathbf{x}^{(0)}$.  By \cref{thm: converge}, the algorithm converges to a local optimum for any $\mathbf{x}^{(0)}\in\mathbb{R}^n$ if all obstacles  are convex and have disjoint closures.
\end{remark}

\subsection{Performance and Comparison\label{sec: performance}}

The performance of \cref{alg: cfs} will be illustrated through two examples, which will also be compared to the performance of existing non-convex optimization methods, interior point (ITP) and sequential quadratic programming (SQP). For simplicity, only convex obstacles and convex boundaries are considered\footnote{The non-convex obstacles or boundaries can either be partitioned into several convex components or be replaced with their convex envelops. Moreover, in practice, obstacles are measured by point clouds. The geometric information is extracted by taking convex hull of the points. Hence it automatically partitions the obstacles into several convex polytopes.}. \cref{alg: cfs} is implemented in both Matlab and C++. The convex optimization problem \cref{eq: cfs qp} is solved using the interior-point-convex method in \texttt{quadprog} in Matlab and the interior point method in Knitro \cite{Byrd2006} in C++. For comparison, \cref{eq: the problem} is also solved directly using ITP and SQP methods in \texttt{fmincon} \cite{centeroptimization} in Matlab and in Knitro in C++. To create fair comparison, the gradient and the hessian of the objective function $J$ and the optimal sub-gradients $\hat\nabla\phi_i$ of the constraint function $\phi_i$'s are also provided to ITP and SQP solvers.

In the examples, $x_0 = (0,0)$ and $G = (9,0)$. The planning horizon $h$ goes from $30$ to $100$. $t_s=(h+1)^{-1}$. The cost function \cref{eq: cost planning} penalizes the average acceleration along the trajectory, e.g.,  $Q=0$ and $S = h^{-1} A^TA$. When $h\rightarrow \infty$,  $J(\mathbf{x}) \rightarrow \int_0^1 \|\ddot{\mathbf{x}}\|^2 dt$ where $\mathbf{x}:[0,1]\rightarrow \mathbb{R}^n$ is a continuous trajectory with $\mathbf{x}(0) =x_0$ and $\mathbf{x}(1) = G$.
The initial reference $\mathbf{x}^{(0)}$ is chosen to be a straight line connecting $x_0$ and $G$ with equally sampled waypoints. In the first scenario, there are three disjoint convex obstacles as shown in \cref{fig: case1}. In the second scenario, there are three disjoint infeasible sets, two of which contain concave corners. Then they are partitioned into five overlapping convex obstacles as shown in \cref{fig: sim scenario}. In the constraint, a distance margin of $0.25$ to the obstacles is required. 

The computation times under different solvers are listed in \cref{table: comparison1}. The first column shows the horizon. In the first row, $h=100$ in scenario 1 and $h=60$ in scenario 2\footnote{In the C++ solver, the constraint is limited by $300$. Hence when there are five obstacles, the maximum allowed $h$ is $60$.}. In the remaining rows, $h$ is the same in the two scenarios. In the second column, ``-M'' means the algorithm is run in Matlab and ``-C'' means the algorithm is run in C++. Under each scenario, the first column shows the final cost. The second column is the total number of iterations. The third and fourth columns are the total computation time and the average computation time per iteration respectively (only the entries that are less than $100ms$ are shown). It does happen that the algorithms find different local optima, though CFS-M and CFS-C always find the same solution. In terms of computation time, \cref{alg: cfs} always outperforms ITP and SQP, since it requires less time per iteration and fewer iterations to converge. This is due to the fact that CFS does not require additional line search after solving \cref{eq: cfs qp} as is needed in ITP and SQP, hence saving time during each iteration. CFS requires fewer iterations to converge since it can take unconstrained step length $\|\mathbf{x}^{(k+1)}-\mathbf{x}^{(k)}\|$ in the convex feasible set as will be shown later. Moreover, \cref{alg: cfs} scales much better than ITP and SQP, as the computation time and time per iteration in CFS-C go up almost linearly with respect to $h$ (or the number of variables). 

The computation time of CFS consists of two parts: 1) the processing time, i.e., the time to compute $\mathcal{F}$ and 2) the optimization time, i.e., the time to solve \cref{eq: cfs qp}. As shown in \cref{fig: time decompose}, the two parts grow with $h$. In Matlab, the processing time dominates, while the optimization time dominates in C++. 

To better illustrate the advantage of \cref{alg: cfs}, the runtime statistics across all methods when $h=100$ in the first scenario are shown in \cref{fig: run time stats}. The first log-log figure shows the cost $J(\mathbf{x}^{(k)})$ versus iteration $k$, while the second semi-log figure shows the feasibility error $\max\{0,-\phi_1(\mathbf{x}^{(k)}),\cdots,-\phi_N(\mathbf{x}^{(k)})\}$ versus $k$. At the beginning, the cost $J=0$ and the feasibility error is $0.75$. In CFS-C and CFS-M, $\mathbf{x}^{(k)}$ becomes feasible at the first iteration while the cost $J(\mathbf{x}^{(k)})$ jumps up. In the following iterations, the cost goes down and converges to the optimal value. In ITP-C, the problem becomes feasible at the third iteration. In SQP-C, it becomes feasible at the fifth iteration. In order to make the problem feasible, the cost jumps much higher in ITP-C than in CFS-C. Once the problem is feasible, it also takes more iterations for ITP-C and SQP-C to converge compared to CFS-C. On the other hand, ITP-M and SQP-M have very small step length in the beginning. The problem only becomes feasible after 100 iterations. But once the problem is feasible, the performance of ITP-M and SQP-M is similar to that of ITP-C and SQP-C. Note that the cost below $1$ is not shown in the figure. 

The optimal trajectories computed by \cref{alg: cfs} for different $h$ in the first scenario is shown in \cref{fig: case1}. Those trajectories converge to a continuous trajectory when $h$ goes up. 
\cref{fig: sim scenario} illustrates the trajectories before convergence in CFS-C and ITP-C in scenario 2 when $h=50$. For ITP-C, the trajectories are shown every iteration in the first ten iterations and then every ten iterations in the remaining iterations. The step length in CFS-C is much larger than that in ITP-C, which explains why CFS requires fewer iterations to become feasible and fewer iterations to converge. The trajectories are feasible and smooth in every iteration in CFS. Hence in case of emergencies, we can safely stop the iterations and get a good enough feasible trajectory before convergence.

With respect to the results, we conclude that \cref{alg: cfs} is time-efficient, local-optimal and scalable.

\begin{figure}[t]
\begin{center}
\includegraphics[width=7cm]{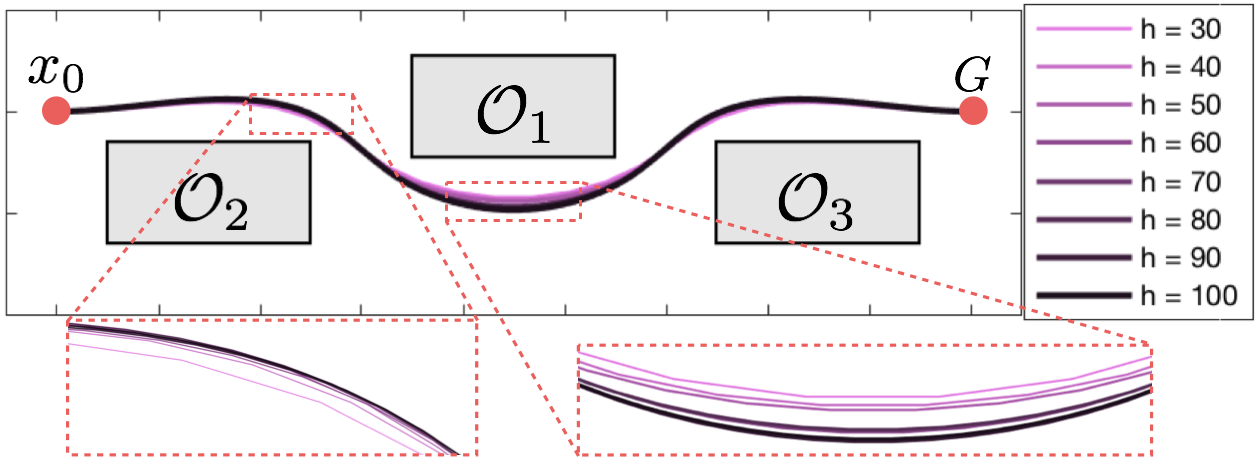}
\caption{Scenario 1 and the optimal trajectories for different horizon $h$.}
\label{fig: case1}
\end{center}
\end{figure}

\begin{table}[t]\scriptsize
\caption{Comparison among different algorithms.}
\begin{center}
\begin{tabular}{|c|c|c|c|c|c|c|c|c|c|}
\hline
\multicolumn{2}{|c|}{}&\multicolumn{4}{|c|}{Scenario 1} & \multicolumn{4}{|c|}{Scenario 2}\\
\hline
$h$ & Method &  Cost  & Iter & Time & dT &  Cost  & Iter & Time & dT\\
\hline
\multirow{6}{0.6cm}{100 or 60} & SQP-M & 1358.9 & 239 & 140.5s & - & 5385.5 & 198 & 60.1s & -\\
& ITP-M & 1358.9 & 470 & 50.6s & - & 5349.6 & 320 & 18.0s & 56.3ms\\
& CFS-M & 1358.9 & 18 & 1.8s & 98.8ms & 5413.2 & 6 & 344.9ms & 57.5ms\\
& SQP-C & 1347.6 & 123 & 47.3s & - & 5489.6 & 52 & 10.2s & -\\
& ITP-C & 1341.7 & 306 & 2.9s & 9.5ms & 5349.6 & 123 & 595.0ms & 4.8ms\\
& CFS-C & 1358.9 & 18 & \textbf{74.4ms} & \textbf{4.1ms} & 5413.2 & 6 & \textbf{27.3ms} & \textbf{4.6ms}\\
\hline
\multirow{6}{*}{50}  &SQP-M &  2299.5 & 110 & 25.8s & -& 5539.6 & 164 & 40.0s &  - \\
 &ITP-M  & 1308.3 & 187 & 8.8s & 47.1ms& 5372.8 & 268 & 11.3s & 42.2ms\\
 &CFS-M  & 1458.2 & 8 & 212.1ms & 26.5ms & 5394.2 & 5 & 186.1s & 37.2ms\\
 &SQP-C & 1308.3 & 52 & 3.4s & 65.4ms & 5682.0 & 62 & 5.7s & 91.9ms\\
 &ITP-C & 1275.1& 131 & 390ms & 3.0ms & 5555.8 & 96 & 495.6ms & 5.2ms\\
 &CFS-C  & 1458.2 & 8 & \textbf{23.7ms} & \textbf{3.0ms} &  5394.2 & 5 & \textbf{21.0ms} & \textbf{4.2ms} \\
\hline
\multirow{6}{*}{40}  & SQP-M &  3391.6 & 97 & 15.6s & - & 5318.1 & 127 & 23.2s & -\\
 & ITP-M & 1317.0& 150 & 5.2s & 34.7ms & 5549.8 & 156 & 5.6s & 35.9ms\\
 & CFS-M & 1317.0 & 8 & 172.6ms & 21.6ms & 5399.2 & 6 & 171.9 & 28.7ms \\
 & SQP-C & 1317.0 & 40 & 1.5s & 37.5ms & 5568.8 & 69 & 2.8s & 40.6ms\\
 & ITP-C	& 1170.5 & 102 & 240.5ms& 2.4ms & 5399.2 & 91 & 290.4ms & 3.2ms\\
 & CFS-C & 1317.0&8 & \textbf{16.5ms} & \textbf{2.1ms}& 5399.2 & 6 & \textbf{19.0ms} & \textbf{3.2ms}\\
\hline
\multirow{6}{*}{30}  & SQP-M & 1039.2 & 106 & 8.4s &79.2ms & 5075.8 & 90 & 11.0s & - \\
  & ITP-M & 1039.2  & 109 & 2.8s & 25.7ms & 5162.4 & 127 & 3.2s & 25.2ms\\
 & CFS-M & 1039.2  & 12 & 208.2ms & 17.3ms & 5167.3 & 5 & 110.6ms &22.1ms\\
 & SQP-C & 1453.3 & 27 & 379.1ms &14.0ms & 5444.3 & 42 & 1.5s & 35.7ms \\
 & ITP-C & 1039.2 & 59 &118.5ms& 2.0ms & 5320.8 & 67 & 125.5ms & 1.9ms	\\
 & CFS-C & 1039.2 & 12 & \textbf{19.0ms} & \textbf{1.6ms} & 5167.3 & 5 & \textbf{12.2ms} & \textbf{2.4ms}\\
\hline
\end{tabular}
\end{center}
\label{table: comparison1}
\end{table}%

\begin{figure}[t]
\begin{center}
\subfloat[Trajectories in CFS-C.]{
\includegraphics[width = 6.1cm]{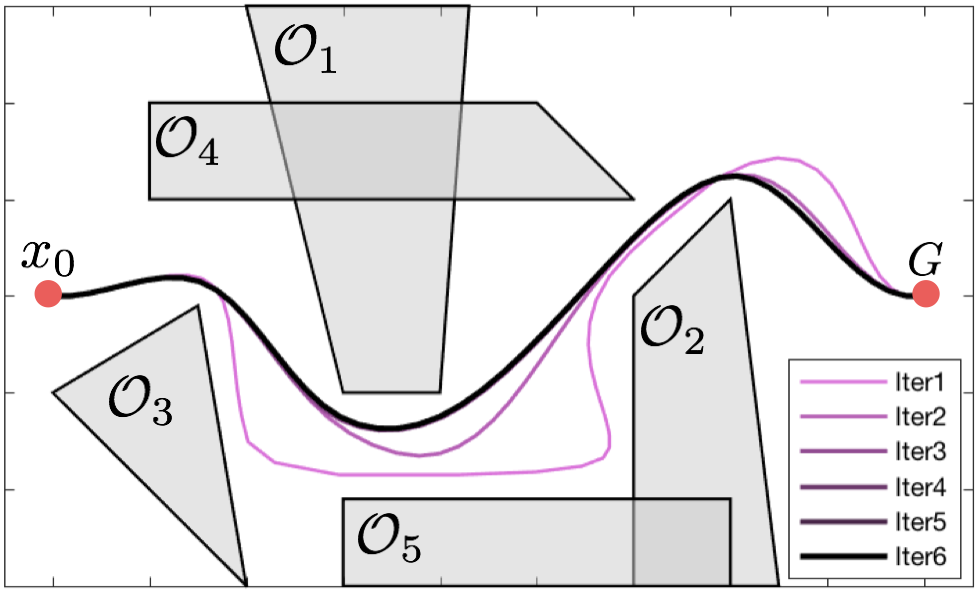}}
\subfloat[Trajectories in ITP-C.]{
\includegraphics[width = 6cm]{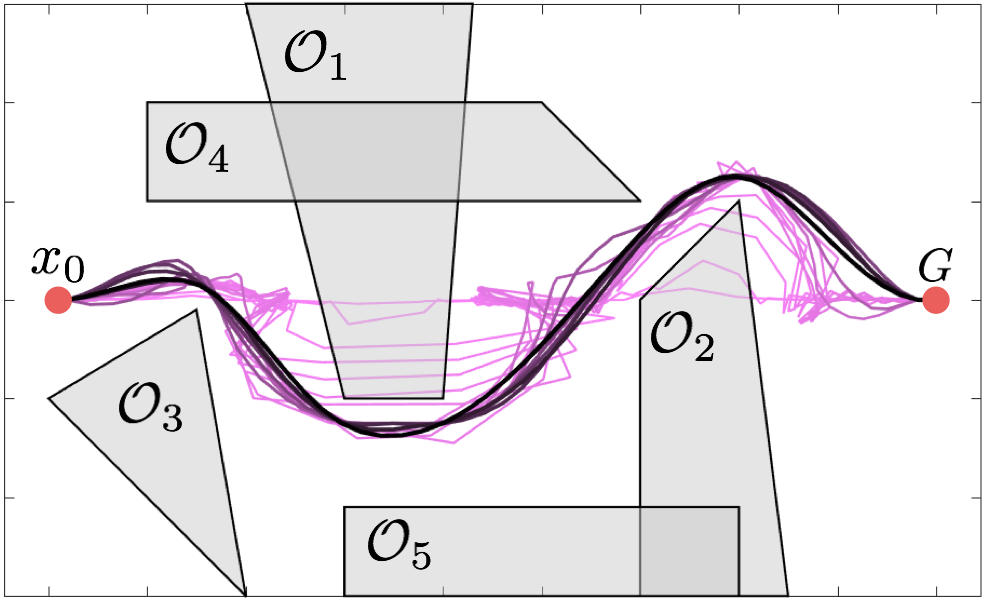}}
\caption{Scenario 2 and the trajectories before convergence for $h=50$.}
\label{fig: sim scenario}
\end{center}
\end{figure}

\begin{figure}[t]
\begin{center}
\includegraphics[width=13cm]{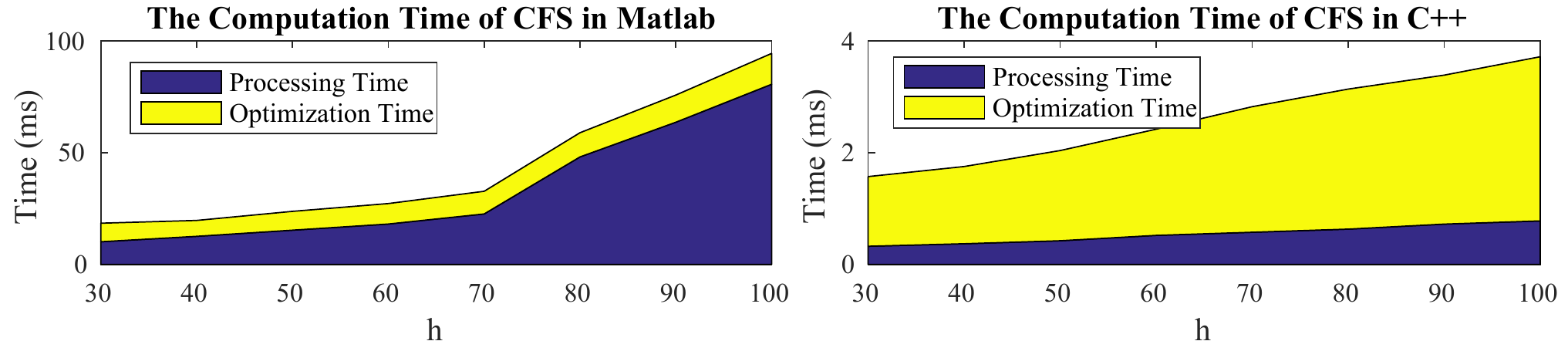}
\caption{The decomposed time per iteration using \cref{alg: cfs} in scenario 1.}
\label{fig: time decompose}
\end{center}
\end{figure}

\begin{figure}[t]
\begin{center}
\includegraphics[width=12cm]{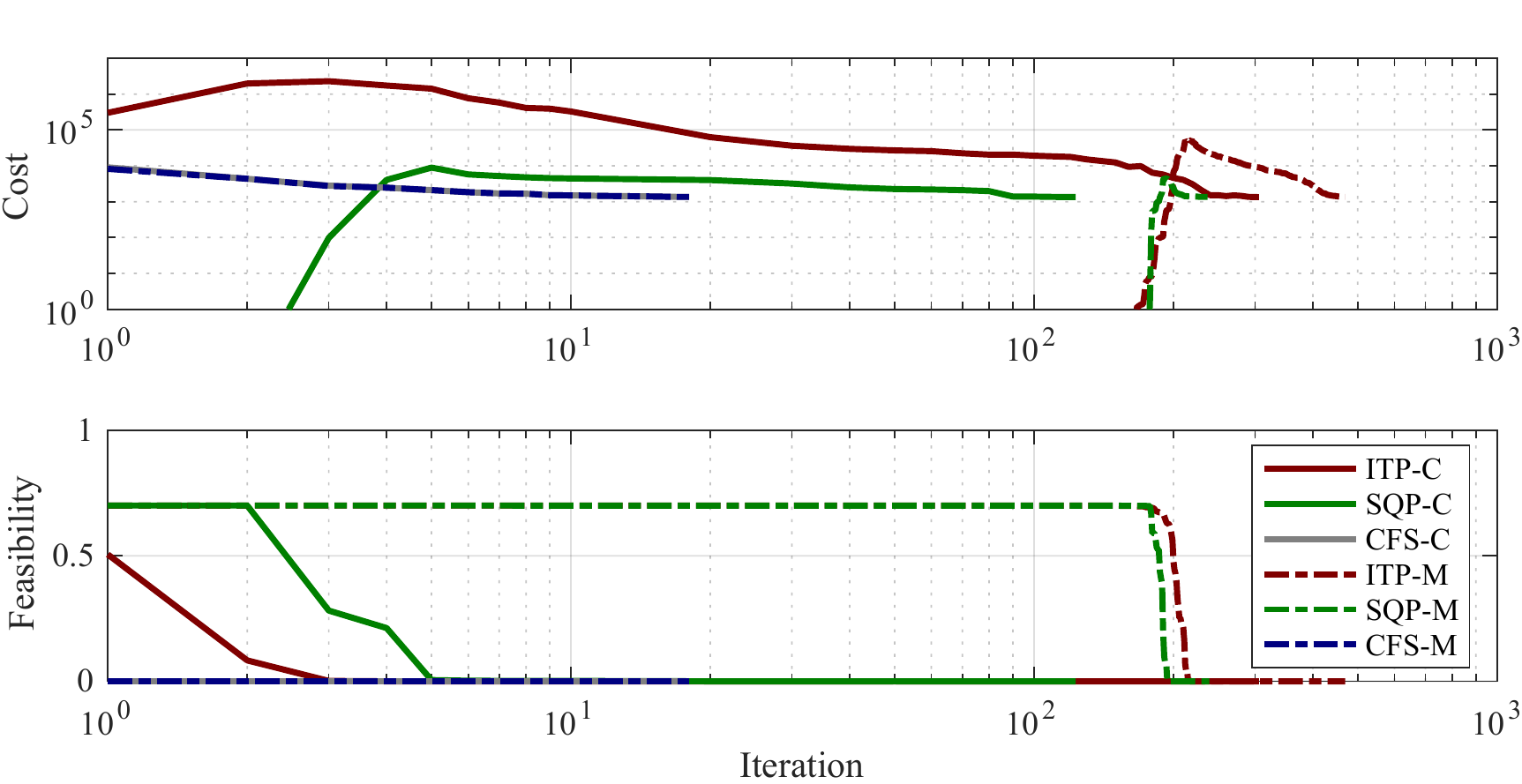}
\caption{The run time statistics in scenario 1.}
\label{fig: run time stats}
\end{center}
\end{figure}

\section{Conclusion\label{sec: conclusion}}
This paper introduced a fast algorithm for real time motion planning based on the convex feasible set. The CFS algorithm can handle problems that have convex cost function and non-convex constraints which are usually encountered in robot motion planning. By computing a convex feasible set within the non-convex constraints, the non-convex optimization problem is transformed into a convex optimization. Then by iteration, we can efficiently eliminate the error introduced by the convexification. It is proved in the paper that the proposed algorithm is feasible and stable. Moreover, it can converge to a local optimum if either the initial reference satisfies certain conditions or the constraints satisfy certain conditions. The performance of CFS is compared to that of ITP and SQP. It is shown that CFS reaches local optima faster than ITP and SQP, hence better suited for real time applications. In the future, methods for computing the convex feasible sets for infeasible references in complicated environments will be explored.

\bibliographystyle{siamplain}

\end{document}